
\documentclass[final, 11pt]{amsart}





\usepackage{mathabx}
\usepackage[utf8]{inputenc} 
\usepackage[english]{babel}
\usepackage{amstext}
\usepackage{euscript}
\usepackage{bbm}
\usepackage{mathrsfs}
\usepackage{enumerate}
\usepackage{graphicx}
\usepackage{caption}
\usepackage{amscd}
\usepackage{verbatim}
\usepackage{amssymb}
\usepackage{amsthm}
\usepackage{amsmath}
\usepackage{amstext}
\usepackage{amsfonts}
\usepackage{latexsym}
\usepackage{mathtools}
\usepackage{enumitem} 
\usepackage{accents}
\usepackage{stmaryrd}
\usepackage{float}

\usepackage{textcomp} 
\usepackage{cite} 

\usepackage[left=1.2in, right=1.2in, top=1.2in, bottom=1.2in]{geometry}

\usepackage[usenames,dvipsnames]{xcolor} 
\definecolor{dkblue}{RGB}{1,31,91} 
\definecolor{oxfordblue}{RGB}{4,30,66}

\definecolor{mycolor}{rgb}{0.97,0.98,0.99}
\definecolor{mysecondcolor}{rgb}{0.2,0.2,0.8}

\definecolor{mydarkgreen}{RGB}{42,55,46}
\definecolor{mygreenone}{RGB}{76,131,122}
\definecolor{mytan}{RGB}{225,221,191}
\definecolor{mytant}{RGB}{247.5,243.1,210.1}
\definecolor{mydarkblue}{RGB}{4,37,58}
\definecolor{mydarkbluet}{RGB}{6,55.5,87}
\definecolor{mydarkbluett}{RGB}{12,111,174}

\usepackage[colorlinks=true, pdfstartview=FitV, linkcolor=dkblue, citecolor=dkblue, urlcolor=dkblue]{hyperref}

\usepackage{todonotes}

\usepackage[color]{showkeys}

\definecolor{myred}{rgb}{0.7,0.1,0.1}
\definecolor{mygreen}{rgb}{0.1,0.7,0.1}
\definecolor{myblue}{rgb}{0.2,0.2,0.5}

\theoremstyle{plain}
\newtheorem{thm}{Theorem}
\newtheorem{remark}[thm]{Remark}
\newtheorem{prop}[thm]{Proposition}

\newtheorem{lemma}[thm]{Lemma}
\newtheorem{defn}[thm]{Definition}

\numberwithin{equation}{section}
\numberwithin{thm}{section}

\newcommand{\sgn}{\mathrm{sgn}}
\newcommand{\inum}{\mathrm{i}}

\newcommand{\eqdef}{\overset{\mbox{\tiny{def}}}{=}}

\newcommand{\paren}[1]{\left(#1\right)}
\newcommand{\lclose}[1]{\left[#1\right)}
\newcommand{\jump}[1]{\llbracket#1\rrbracket}
\newcommand{\braces}[1]{\left\{ #1 \right\}}
\newcommand{\set}[2]{\left\{ #1 \left| #2 \right. \right\}}
\newcommand{\D}[2]{\frac{d#1}{d#2}}
\newcommand{\PD}[2]{\frac{\partial#1}{\partial#2}}
\newcommand{\p}{\partial}

\newcommand{\PPD}[2]{{\p_{#1}#2}}
\newcommand{\PPDD}[3]{{\p^{#1}_{#2} #3}}

\newcommand{\at}[2]{\left. #1 \right|_{#2}}

\newcommand{\mc}[1]{\mathcal{#1}}
\newcommand{\wh}[1]{\widehat{#1}}

\newcommand{\bm}[1]{\boldsymbol{#1}}
\newcommand{\abs}[1]{\left\lvert #1 \right\rvert}
\newcommand{\norm}[1]{\left\lVert #1 \right\rVert}

\newcommand{\mbs}{{\mathbb{S}^1}}

\setcounter{tocdepth}{1}

\begin{document}

\author[E. Garc\'ia-Ju\'arez]{Eduardo Garc\'ia-Ju\'arez}
\address{Departamento de An\'alisis Matem\'atico, Universidad de Sevilla, C/Tarfia s/n, Campus Reina Mercedes, 41012, Sevilla, Spain. \href{mailto:egarcia12@us.es}{egarcia12@us.es}}

\author[P.-C. Kuo]{Po-Chun Kuo}
\address{Department of Mathematics, University of Pennsylvania, David Rittenhouse Lab., 209 South 33rd St., Philadelphia, PA 19104, USA. 
\href{mailto:kuopo@sas.upenn.edu}{kuopo@sas.upenn.edu}}

\author[Y. Mori]{Yoichiro Mori}
\address{Department of Mathematics, University of Pennsylvania, David Rittenhouse Lab., 209 South 33rd St., Philadelphia, PA 19104, USA. \href{mailto:y1mori@math.upenn.edu}{y1mori@math.upenn.edu}}

\keywords{Inextensible, Fluid-Structure Interaction, Immersed boundary problem, Stokes flow}
\date{\today}

\title[The Immersed inextensible interface problem in 2D Stokes Flow]{The Immersed inextensible interface problem \\ in 2D Stokes Flow}

\begin{abstract}
We study the dynamics of an inextensible, closed interface subject to bending forces and immersed in a two-dimensional and incompressible Stokes fluid. We formulate the problem as a boundary integral equation in terms of the tangent angle and demonstrate the well-posedness in suitable time-weighted spaces of the resulting nonlinear and nonlocal system. The solution is furthermore shown to be smooth for positive times. Numerical computations are performed to initiate the study of the long-time behavior of the interface.

\end{abstract}

\maketitle

\setcounter{tocdepth}{2}
\tableofcontents

\section{Introduction}\label{sec1}

\subsection{Problem Formulation}\label{probform}

We consider the problem of a one-dimensional inextensible interface immersed in a two-dimensional Stokes fluid in $\mathbb{R}^2$.
A closed interface $\Gamma$ encloses a simply connected bounded domain $\Omega\subset\mathbb{R}^2$
filled with a Stokes fluid with viscosity $\mu$. The outside region $\mathbb{R}^2\backslash \Omega$ is filled with 
a Stokes fluid of viscosity $1$. The equations satisfied are:
\begin{align}
\label{uin}
\mu \Delta \bm{u}-\nabla p&=0 \text{ in } \Omega, \\
\label{uout}
\Delta \bm{u}-\nabla p &=0 \text{ in } \mathbb{R}^2\backslash \Omega\\
\label{incomp}
\nabla \cdot \bm{u}&=0 \text{ in } \mathbb{R}^2\backslash \Gamma.
\end{align}
Here $\bm{u}$ is the velocity field and $p$ is the pressure. 

We must specify the interface conditions at $\Gamma$.
Parameterize $\Gamma$ 
by the material or Lagrangian coordinate $\mathbb{S}^1=s \in\mathbb{R}/(2\pi\mathbb{Z})$, and let $\bm{X}(s,t)$
denote the coordinate position of $\Gamma$ at time $t$. 
The parametrization is in the counter-clockwise direction, 
so that the interior region $\Omega$ is on the left hand side of the tangent vector 
$\partial \bm{X}/\partial s$.
The interface is inextensible, and thus, the $s$ is also the arc-length coordinate, so that $\abs{\partial \bm{X}/\partial s}=1$.
For any quantity $w$ defined on 
$\Omega$ and $\mathbb{R}^2\backslash \Omega$, we set:
\begin{equation*}
\jump{w}=\at{w}{\Gamma_{\rm i}}-\at{w}{\Gamma_{\rm i}}
\end{equation*}
where $\at{w}{\Gamma_{\rm i,e}}$ are the trace values of $w$ at $\Gamma$ evaluated 
from the $\Omega$ (interior) and $\mathbb{R}^2\backslash\Omega$ (exterior) sides of $\Gamma$.
Let $\bm{n}$ be the outward pointing unit normal vector on $\Gamma$:
\begin{equation*}
\bm{n}=\mc{R}^{-1}\bm{\tau}, \;\bm{\tau}=\p_s \bm{X}=\PD{\bm{X}}{s},\;
\mc{R}=\begin{pmatrix} 0 & -1 \\ 1 & 0 \end{pmatrix},
\end{equation*}
where $\mc{R}$ is the $\pi/2$ rotation matrix.
The interface conditions are:
\begin{align}
\label{ujump}
\jump{\bm{u}}&=0, \; \PD{\bm{X}}{t}=\p_t\bm{X}=\bm{u}(\bm{X},t),\\
\label{stressjump}
\jump{\Sigma\bm{n}}&=\bm{F},\; \Sigma=\begin{cases}
2\mu\nabla_{\rm S}\bm{u}-pI &\text{ in } \Omega,\\
2\nabla_{\rm S}\bm{u}-pI &\text{ in } \mathbb{R}^2\backslash \Omega,
\end{cases}, \; \nabla_{\rm S}\bm{u}=\frac{1}{2}\paren{\nabla \bm{u}+(\nabla\bm{u})^{\rm T}}.
\end{align}
where $I$ is the $2\times 2$ identity matrix.
The first condition is the no-slip boundary condition and the second is the stress balance condition where $\Sigma$
is the fluid stress and $\bm{F}$
is the force exerted by the interface $\Gamma$. 
We let:
\begin{equation}\label{linearF}
\bm{F}=-\p_s^4 \bm{X}+\p_s(\lambda \bm{\tau})
\end{equation}
Here, the force $\bm{F}$ is a sum of a fourth order bending force and a tension term. The tension $\lambda$ is determined so that the following inextensibility condition is satisfied:
\begin{equation}\label{inext}
\abs{\p_s\bm{X}}=1.
\end{equation}
Note that:
\begin{equation}\label{inext1}
\frac{1}{2}\p_t\abs{\p_s\bm{X}}^2=\p_s\bm{X}\cdot \p_s\p_t\bm{X}=\bm{\tau}\cdot \p_s\bm{u}=0,
\end{equation}
where in the second equaltiy, we used the second relation in \eqref{ujump}. Assuming $\abs{\p_s\bm{X}}=1$ at $t=0$,
the above is equivalent to the inextensibility condition. 
We may also rewrite the force $\bm{F}$ using the curvature $\kappa$:
\begin{equation}\label{linearF1}
\begin{split}
\bm{F}&=\paren{\p_s^2\kappa +\frac{1}{2}\kappa^3}\bm{n}+\p_s(\sigma \bm{\tau}), \; \sigma=\lambda+\frac{3}{2}\kappa^2,\\
&\text{ where }\p_s\bm{\tau}=-\kappa \bm{n}, \; \p_s \bm{n}=\kappa \bm{\tau}.
\end{split}
\end{equation}
We shall henceforth use the above expression for $\bm{F}$, in which $\sigma$ is now the unknown tension enforcing inextensibility. 
In the far field, $|\bm{x}|\to \infty$, we impose the condition that $|\bm{u}|\to 0$ and $p\to 0$.
This completes the specification of our problem.

One crucial feature of the above problem is that we must determine the tension $\sigma$ as part of the problem; this is analogous to the determination of pressure in the Navier-Stokes system. This problem can be formalized as the following {\em tension determination problem}, studied in \cite{KuoLaiMoriRodenberg2023}. 
For a fixed configuration $\bm{X}$, consider \eqref{uin}, \eqref{uout} and \eqref{incomp} with the boundary condition:
\begin{equation}\label{tdf}
\jump{\Sigma \bm{n}}=\wh{\bm{F}}+\p_s(\sigma\bm{\tau}), \; \int_0^{2\pi} \wh{\bm{F}}ds=0.
\end{equation}
where $\wh{\bm{F}}$ is a given force density along the interface. The zero average condition is imposed to avoid logarithmic growth of the velocity field at infinity.
The problem is to determine $\sigma$ so that the inextensbility constraint \eqref{inext1} is satisfied. Note that, for fixed $\bm{X}$, this is a linear problem in $\bm{F}$.
In the context of the full dynamic problem introduced above, this tension determination problem must be solved at each time point given $\wh{\bm{F}}=\paren{\p_s^2\kappa +\frac{1}{2}\kappa^3}\bm{n}$.
It is easily checked that this choice of $\wh{\bm{F}}$ satisfies the zero average condition.

Let us make an observation about the uniqueness of the tension determination problem. 
Let us consider the null space, that is, $\sigma$ when $\wh{\bm{F}}=0$. Take the Stokes equation \eqref{uin} and \eqref{uout}, multiply by $\bm{u}$ and integrate by parts using \eqref{tdf}
with $\wh{\bm{F}}=0$. We have:
\begin{equation}
\int_{\Omega} 2\mu\abs{\nabla_{\rm S}\bm{u}}^2d\bm{x}
+\int_{\mathbb{R}^2\backslash\Omega} 2\abs{\nabla_{\rm S} \bm{u}}^2d\bm{x}=\int_0^{2\pi} \p_s(\sigma \bm{\tau})\cdot\bm{u}ds=-\int_0^{2\pi} \sigma (\bm{\tau}\cdot\p_s\bm{u})ds=0,
\end{equation}
where we integrated by parts in $s$ in the second equality and used \eqref{inext1} in the last inequality. From this, it is immediate that $\nabla_{\rm S}\bm{u}=0$ everywhere and hence 
that the pressure $p$ must be constant in $\Omega$ and $\mathbb{R}^2\backslash \Omega $ respectively. Let the difference in these pressures (interior minus exterior) be equal to $\triangle p$.
The stress jump condition \eqref{tdf} yields:
\begin{equation*}
-\triangle p \bm{n}=\p_s(\sigma \tau)=(\p_s\sigma)\bm{\tau}-\sigma \kappa \bm{n}.
\end{equation*}
Thus, we have $\p_s\sigma=0$ and $\sigma \kappa=\text{const}$. 
We thus have two cases, one in which $\kappa$ is constant, in which case $\Gamma$ is a circle, and in the other case in which $\kappa$ is not a constant and $\Gamma$ is not a circle.
When $\Gamma$ is not a circle, $\sigma=0$. If $\Gamma$ is a circle $\sigma$ is an arbitrary constant. Thus when $\Gamma$ is a circle, the tension determination problem is not uniquely solvable.

We note two important features of the problem. First, the total interfacial length remains constant in time:
\begin{equation*}
\abs{\Gamma}=\int_0^{2\pi} \abs{\p_s\bm{X}}ds=2\pi \text{ since } \abs{\p_s\bm{X}}=1.
\end{equation*}
Furthermore, due to the incompressibility condition, the total area of $\Omega$ does not change with time:
\begin{equation*}
\D{}{t}\abs{\Omega}=0.
\end{equation*}
This implies, by the isoperimetric inequality that:
\begin{equation*}
\abs{\Omega}\leq \frac{\abs{\Gamma}^2}{4\pi}=\pi.
\end{equation*}
If the equality holds in the above, then there is no interesting dynamics; the initial circle will remain stationary.
When equality does not hold, the dynamics is not trivial and the equilibrium shape, if it exists, will not be a circle. 
In particular, it is expected that the tension determination problem will be uniquely solvable at each instant.

The other important feature is that we have the following energy identity. An easy calculation demonstrates that:
\begin{align}\label{Eneqn}
\D{}{t}\int_0^{2\pi} \frac{1}{2}\abs{\p_s^2 \bm{X}}^2ds&=\D{}{t}\int_0^{2\pi} \frac{1}{2}\kappa^2 ds=-\int_{\Omega} 2\mu\abs{\nabla_{\rm S}\bm{u}}^2d\bm{x}
-\int_{\mathbb{R}^2\backslash\Omega} 2\abs{\nabla_{\rm S} \bm{u}}^2d\bm{x}.
\end{align}
We notice that for an inextensible interface its curvature coincides with the second derivative $\kappa(s)=\partial_s^2\bm{X}(s)$, and the energy above corresponds to the so-called Willmore energy \cite{Willmore65}. The expression for $\bm{F}$ can in fact be derived taking the first variation of the Willmore energy under the constraint of local inextensibility (see e.g. \cite[Appendix A]{VeerapaneniGueyffierZorinBiros09}).

\subsection{Boundary Integral, Angle Variable Reduction and Main Results}

We shall henceforth focus on the case $\mu=1$, so that the viscosities of the interior and exterior fluids are the same.
Let us rewrite the above problem using boundary integral equations. We may express the solution to our problem using boundary integrals as follows:
\begin{align}\label{singlelayer}
\bm{u}(\bm{x},t)&=\int_{\mathbb{S}^1} G(\bm{x}-\bm{X}(\eta))\bm{F}(\eta)d\eta, \\
G(\bm{x})&=\frac{1}{4\pi}\paren{-\log \abs{\bm{x}}I+\frac{1}{\abs{\bm{x}}^2}\begin{pmatrix}x^2 & xy \\ xy & y^2\end{pmatrix}}, \; 
\bm{x}=(x,y)^{\rm T}\in \mathbb{R}^2,
\end{align}
where $\bm{G}$ is the Stokeslet, the fundamental solution of the 2D Stokes problem.
We note that $\bm{X}$ and $\bm{F}$ (and other variables) depend on $t$, 
but we will often suppress this dependence to avoid cluttered notation.
It is well-known that the the boundary condition \eqref{ujump} and \eqref{stressjump} are automatically satisfied so long as $\bm{X}$ and $\bm{F}$ are smooth enough. Using the second condition in \eqref{ujump}, we thus have
\begin{equation}\label{Xteqn}
\PD{\bm{X}}{t}(s)=\int_{\mathbb{S}^1} G(\Delta \bm{X})\bm{F}(\eta)d\eta,\; \Delta \bm{X}=\bm{X}(s)-\bm{X}(\eta).
\end{equation}
Since $\abs{\p_s\bm{X}}=1$, we may introduce an angle variable $\theta(s)$ so that:
\begin{equation}\label{Xtheta}
\p_s\bm{X}(s)=\bm{\tau}(s)=\begin{pmatrix} \cos(\theta(s))\\ \sin(\theta(s))\end{pmatrix}.
\end{equation}
We notice that $\theta$ is defined as a continuous function from $\mathbb{S}^1$ to $\mathbb{S}^1$. It will be more convenient to work with a continuous periodic function from $\mathbb{S}^1$ to $\mathbb{R}$,
\begin{equation}\label{varphi_def}
    \varphi(s)=\theta(s)-s.
\end{equation}
Note that:
\begin{equation*}
\p_s \bm{\tau}=-\bm{n}\p_s \theta, \;\; \p_s\bm{n}=\bm{\tau}\p_s \theta, \; \kappa=\p_s\theta.
\end{equation*}
We now rewrite the problem in terms of $\varphi$.  
First, we have:
\begin{equation*}
\p_s \bm{u}=\p_s (\p_t \bm{X})=\p_t \p_s\bm{X}=\p_t \bm{\tau}=-\bm{n}\p_t \varphi.
\end{equation*}
Thus, 
\begin{equation}\label{theta_eq00}
\p_t \varphi=-\bm{n}\cdot \p_s\bm{u}, \; \bm{\tau}\cdot \p_s\bm{u}=0.
\end{equation}
Plugging in the expression for \eqref{Xteqn} into the above, we have:
\begin{align}
\p_t \varphi&=-\bm{n}(s)\cdot\p_s\int_{\mathbb{S}^1} G(\Delta \bm{X})\bm{F}(\eta)d\eta,\label{theta_eqn}\\
0&=\bm{\tau}(s)\cdot\p_s \int_{\mathbb{S}^1} G(\Delta \bm{X})\bm{F}(\eta)d\eta,\label{tension_eqn}
\end{align}
where $\bm{F}$ is given by (note \eqref{linearF1} and $\kappa=\p_s\theta$):
\begin{equation}\label{Fel_theta}
\begin{aligned}
\bm{F}&=\Big(\p_s^3 \theta+\frac{1}{2}(\p_s\theta)^3\Big)\bm{n}+\p_s(\sigma \bm{\tau})\\
&=\Big(\p_s^3 \varphi+\frac{1}{2}(1+\p_s\varphi)^3\Big)\bm{n}+\p_s(\sigma \bm{\tau}).    
\end{aligned}
\end{equation}
Equation \eqref{theta_eqn} is an evolution equation for  $\varphi$,
and equation \eqref{tension_eqn} is the inextensibility constraint that determines the tension $\sigma$. 

Implicit in the above reduction is that we can recover $\bm{X}$ from $\varphi$. Note that:
\begin{equation}
\Delta \bm{X}=\bm{X}(s)-\bm{X}(\eta)=\int_\eta^{s} \bm{\tau}(s')ds'.
\end{equation}
This shows that \eqref{theta_eqn} and \eqref{tension_eqn} can be written solely as equations of $\varphi$. The above only reconstructs $\Delta \bm{X}$; we have only determined $\bm{X}$ up to translation. To complete the recovery of $\bm{X}$ from $\varphi$, consider the center position:
\begin{equation}
\bm{X}_{\rm c}=\frac{1}{2\pi}\int_{\mathbb{S}^1} \bm{X}(s)ds.
\end{equation}
From \eqref{Xteqn}, it is immediate that
\begin{equation}
\PD{\bm{X}_{\rm c}}{t}=\frac{1}{2\pi}\int_{\mathbb{S}^1}\int_{\mathbb{S}^1} G(\Delta \bm{X})\bm{F}(\eta)d\eta ds.
\end{equation}
Using the above, we may recover $\bm{X}_{\rm c}$ and thus reconstruct the whole of $\bm{X}$ once $\varphi$ is known. Henceforth, we may thus focus on solving \eqref{theta_eqn} and \eqref{tension_eqn} where $\varphi$ and $\sigma$ are our unknown functions.

Let us now discuss our strategy to prove well-posedness of \eqref{theta_eqn} and \eqref{tension_eqn}. We first use \eqref{tension_eqn} to solve for $\sigma$. We may write \eqref{tension_eqn} as:
\begin{equation}\label{tdp}
\begin{split}
\mc{L}\sigma&\equiv \bm{\tau}(s)\cdot\p_s
\!\int_{\mathbb{S}^1} \!\!\!G(\Delta \bm{X})\p_\eta(\sigma\bm{\tau})d\eta=
-\bm{\tau}(s)\cdot\p_s\!\int_{\mathbb{S}^1} \!\!\!G(\Delta \bm{X})\wh{\bm{F}}d\eta\equiv F,\\
\wh{\bm{F}}&=\paren{\p_\eta^3 \varphi+\frac{1}{2}(1+\p_\eta\varphi)^3}\bm{n}.
\end{split}
\end{equation}
This is simply the tension determination problem discussed in the previous section, written in boundary integral form. 
Once we solve this problem $\sigma$ will be expressed in terms of $\varphi$, which can then be substituted back into \eqref{theta_eqn}, to produce an equation only in $\varphi$.
We now give a quick sketch of the results on the tension determination problem obtained in \cite{KuoLaiMoriRodenberg2023} and expanded upon in Section \ref{sec3}.

As discussed in Section \ref{probform}, the tension determination problem is expected to have a unique solution if and only if $\bm{X}$ is not a circle.
To verify this, we decompose the operator $\mc{L}$ in \eqref{tdp} as follows.
For a function $\bm{Q}(\eta)$ with values in $\mathbb{R}^2$, let:
\begin{equation*}
\p_s \int_{\mathbb{S}^1}G(\bm{X}(s)-\bm{X}(\eta))\bm{Q}(\eta)d\eta=-\frac{1}{4}(\mc{H}\bm{Q})(s)+(\mc{R}\bm{Q})(s),
\end{equation*}
where $\mc{H}$ is the Hilbert transform,
\begin{equation*}
\begin{aligned}
\mathcal{H}\bm{Q}(s)&=\frac{1}{\pi}\int_{\mathbb{S}^1}\frac{\bm{Q}(\eta)}{2\tan(\frac{s-\eta}{2})}d\eta,
\end{aligned}
\end{equation*}
and the operator $\mc{R}$ depends on 
$\bm{X}$, and hence on $\varphi$, and is defined by
\begin{equation}\label{Roperator}
\begin{aligned}
\mathcal{R}\bm{Q}(s)&=\frac{1}{4\pi}\int_{\mathbb{S}^1}K(s,\eta)\bm{Q}(\eta)d\eta,
\end{aligned}
\end{equation}
where the kernel is given by
\begin{equation*}
    \begin{aligned}
    K(s,\eta)&=-\bigg(\frac{\Delta\bm{X}\cdot\partial_s \bm{X}(s)}{|\Delta\bm{X}|^2}-\frac{1}{2}\cot{\Big(\frac{s-\eta}2\Big)}\bigg)I+\partial_s \frac{\Delta\bm{X}\otimes\Delta\bm{X}}{|\Delta\bm{X}|^2}.
    \end{aligned}
\end{equation*}
Let us rewrite \eqref{tdp} using the above decomposition:
\begin{equation}
\mc{L}\sigma=\bm{\tau}\cdot\paren{\frac{1}{4}\mc{H}+\mc{R}}\p_s(\sigma \bm{\tau})=-\bm{\tau}\cdot\paren{\frac{1}{4}\mc{H}+\mc{F}}\wh{\bm{F}}.
\end{equation}
We further rewrite this as:
\begin{equation}\label{Lsigma}
\begin{split}
\mc{L}\sigma=&\frac{1}{4}\mc{H}\p_s\sigma+\frac{1}{4}\bm{\tau}\cdot[\mc{H},\bm{\tau}]\p_s\sigma-\frac{1}{4}\bm{\tau}\cdot[\mc{H},\bm{n}](\sigma(1+\p_s\varphi))+\mc{R}\p_s\sigma\\
=&-\bm{\tau}\cdot\paren{\frac{1}{4}\mc{H}+\mc{R}}\wh{\bm{F}}
\end{split}
\end{equation}
where  the brackets denote a commutator
\begin{equation}\label{comm}
    [\mc{H},f]g:=\mc{H}\paren{fg}-f\mc{H}g.
\end{equation}
For a function $f$ defined on $\mathbb{S}^1$, we note that
\begin{equation*}
\mc{H}\p_s f=\mc{F}_s^{-1}\abs{\xi}\mc{F}_sf=\abs{\nabla}f
\end{equation*}
where $\mc{F}$ is the Fourier transform (Fourier series) and $\abs{\xi}$ is multiplication by the Fourier variable.
The operator $\mc{H}\p_s=\abs{\nabla}$ thus has the effect of taking one derivative. 
In Section \ref{sec2}, we show that, so long as $\varphi$ is sufficiently smooth, $\mc{R}$ and the commutator \eqref{comm}
increases the regularity of the function on which it acts.
The terms involving $\mc{R}$ and the commutators in \eqref{Lsigma} are thus lower order than $\mc{H}\p_s$.
The operator $\mc{L}$ is the sum of $\mc{H}\p_s$ and a relatively compact perturbation. 
Using Fredholm theory, we see that \eqref{tdp} is solvable if the null space of the operator $\mc{L}$ is trivial.
Given our discussion in Section \ref{probform}, \eqref{tdp} is uniquely solvable if and only if $\bm{X}$ is not a circle. 

Equation \eqref{Lsigma} suggests that $\sigma$ is one degree smoother than $\wh{\bm{F}}$. 
Given that the main term in $\wh{\bm{F}}$ is the third derivative of $\varphi$ (see \eqref{tdf}), we expect $\sigma$ to behave like the second derivative 
of $\varphi$ in terms of its regularity. 
This is indeed the case, which is verified in Section \ref{sec3}.

Let us now consider equation \eqref{theta_eqn}. Write $\bm{F}$ in \eqref{Fel_theta} as:
\begin{equation}\label{NormalTanForce}
\bm{F}=F_n\bm{n}+F_\tau\bm{\tau}, \; F_n(\varphi,\sigma)=\p_s^3\varphi+\frac{1}{2}(1+\p_s\varphi)^3-\sigma (1+\p_s\varphi), \; F_\tau(\sigma)=\p_s\sigma.
\end{equation}
Equations \eqref{theta_eqn} can be written as:
\begin{align}
\p_t \varphi&=\frac{1}{4}\mc{H}F_n(\varphi,\sigma)+\frac{1}{4}\bm{n}\cdot[\mc{H},\bm{n}]F_n(\varphi,\sigma)+\frac{1}{4}\bm{n}\cdot[\mc{H},\bm{\tau}]\p_s\sigma\\
\nonumber&-\bm{n}\cdot\mc{R}\big(F_n(\varphi,\sigma) \bm{n}+\bm{\tau}\p_s\sigma\big)=\frac{1}{4}\mc{H}\p_s^3\varphi+\mc{N}(\varphi,\sigma).
\end{align}
For a function $f$ defined on $\mathbb{S}^1$, we have:
\begin{equation}
\mc{H}\p_s^3 f=-\mc{F}_s^{-1}\abs{\xi}^3\mc{F}_sf=-\abs{\nabla}^3f.
\end{equation}
In Section \ref{nonlin}, we establish that $\mc{N}$ is lower order than $\abs{\nabla}^3$. 
This is essentially a consequence of the fact that $\mc{R}$ and the commutator \eqref{comm} increases the regularity of the function on which it acts, and 
the fact that $\sigma$ has regularity of two derivatives of $\varphi$, as discussed above. Recalling \eqref{varphi_def} and using the Duhamel formula, we have:
\begin{equation}\label{duhamel}
\varphi(t)=\exp\paren{-\frac{t}{4}\abs{\nabla}^3}\varphi_0 +\int_0^t \exp\paren{-\frac{1}{4}\abs{\nabla}^3(t-s)}\mc{N}(\varphi(s),\sigma(\varphi(s)))ds. 
\end{equation}
Here, we have written $\sigma$ as being a function of $\varphi$. Indeed, we may invert \eqref{tdp} to obtain $\sigma=\mc{L}^{-1}F$, where $\mc{L}$ and $F$ only depend on $\varphi$.
In the rest of Section \ref{sec4}, we make use of the above expression to obtain a local in time solution via a fixed point argument and establish parabolic smoothing.

We now state our main result. For a function defined on $\mathbb{S}^1$ with values in $\mathbb{R}$, define the H\"older norms as follows:
\begin{equation}
\begin{split}
\norm{u}_{C^0}&=\sup_{s\in \mathbb{S}^1} \abs{u}, \; \norm{u}_{C^k}=\norm{u}_{C^0}+\sum_{\ell=1}^k \norm{\p_s^\ell u}_{C^0},\; k\in \mathbb{N},\\
\norm{u}_{C^{\gamma}}&=\norm{u}_{C^0}+\sup_{s_1,s_2\in \mathbb{S}^1} \frac{\abs{u(s_1)-u(s_2)}}{\abs{s_1-s_2}^{\gamma}}, \; 0<\gamma<1,\\
\norm{u}_{C^{k,\gamma}}&=\norm{u}_{C^{k-1}}+\norm{\p_s^k u}_{C^\gamma}, \; k\in \mathbb{N},\; 0<\gamma<1.
\end{split}
\end{equation}
The Banach space of continuous functions for which the above norms are finite will be denoted $C^{k}(\mathbb{S}^1;\mathbb{R}), C^{k,\gamma}(\mathbb{S}^1;\mathbb{R})$, etc.
We shall also sometimes denote $C^{k,\gamma}$ as $C^{k+\gamma}$.
We similarly define norms for functions with values in $\mathbb{R}^2$. 
We will also often use the shorthand $C^{k,\gamma}(\mathbb{S}^1)$ or $C^{k,\gamma}$ when no confusion can arise. 
Note that smooth functions are not dense in $C^{k,\gamma}(\mathbb{S}^1), 0<\gamma<1$. We denote the completion of smooth functions in $C^{k,\gamma}(\mathbb{S}^1)$ as $h^{k,\gamma}(\mathbb{S}^1)$.

For a function $u$ taking $a\leq t\leq b$, let
\begin{equation*}
\begin{split}
\norm{u}_{C([a,b];C^{k,\gamma})}&=\sup_{a\leq t\leq b} \norm{u}_{C^{k,\gamma}}, \\
\norm{u}_{C^\ell([a,b];C^{k,\gamma})}&=\norm{u}_{C([a,b];C^{k,\gamma})}+\sum_{j=1}^\ell \norm{\p_s^j u}_{C([a,b];C^{k,\gamma})}.
\end{split}
\end{equation*}
The corresponding function spaces will be denoted $C^\ell([a,b];C^{k,\gamma}(\mathbb{S}^1))$. We also let:

\begin{equation*}
C^\ell((a,b];C^{k,\gamma}(\mathbb{S}^1))=\bigcap_{\epsilon>0}\left\{f:(a,b]\rightarrow C^{k,\gamma}(\mathbb{S}^1)  \left| f|_{[a+\epsilon,b]} \in C^\ell([a+\epsilon,b];C^{k,\gamma}(\mathbb{S}^1))\right.\right\}.
\end{equation*}

To guarantee that $\bm{X}$ defines a non-degenerate simple closed curve we must ensure that the arc-chord condition is satisfied,
\begin{equation*}
    \begin{aligned}
        |\bm{X}|_*=\inf_{s_1,s_2\in\mathbb{S}^1,s_1\neq s_2}\frac{|\bm{X}(s_1)-\bm{X}(s_2)|}{|s_1-s_2|}>0.
    \end{aligned}
\end{equation*}
Recalling \eqref{Xtheta}, we will also denote the arc-chord quantity by $|\varphi|_\star$, 
\begin{equation*}
    \begin{aligned}
        |\varphi|_\star=\inf_{s_1,s_2\in\mathbb{S}^1,s_1\neq s_2}\frac{1}{|s_1-s_2|}\Big|\int_{s_1}^{s_2} \begin{pmatrix} \cos(s+\varphi(s))\\ \sin(s+\varphi(s))\end{pmatrix}ds \Big|.
    \end{aligned}
\end{equation*}
We define our solution as follows.
\begin{defn}\label{def_sol}
    Let  $\gamma\in(0,1)$ and $\varphi_0\in C^{1,\gamma}(\mathbb{S}^1)$ with $|\varphi_0|_*>0$. We say that $(\varphi,\sigma)$, $\varphi\in C^1((0,T];C^{1,\gamma}(\mathbb{S}^1))\cap C((0,T];C^{4,\gamma}(\mathbb{S}^1))$, $\sigma\in C((0,T];C^{2,\gamma}(\mathbb{S}^1))$  is a strong solution  if it satisfies the equations \eqref{theta_eqn}-\eqref{tension_eqn} pointwise, $|\varphi(t)|_*>0$ for $t\in[0,T]$, and $\varphi(t)\rightarrow \varphi_0$ in $C^{1,\gamma}(\mathbb{S}^1)$ as $t\to 0^+$.
\end{defn}
Our main result is as follows.
\begin{thm}\label{main_theo}
     Let $\varphi\in h^{1,\gamma}(\mathbb{S}^1)$, $\gamma\in(0,1)$, $|\varphi|_*>0$. Then, there exists $T>0$ such that the Inextensible Interface Problem \eqref{theta_eqn}-\eqref{tension_eqn} has a unique strong solution.
     Moreover, the solution is smooth at positive times $\varphi\in C^\infty((0,T]\times\mathbb{S}^1)$.
\end{thm}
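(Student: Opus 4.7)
The plan is to solve the Duhamel equation \eqref{duhamel} by Banach fixed point in a time-weighted H\"older space that reconciles the low initial regularity with the third-order parabolic smoothing of $-\tfrac{1}{4}|\nabla|^3$. I would work in a space $X_T$ with norm
\begin{equation*}
\|\varphi\|_{X_T} := \sup_{0\le t\le T}\|\varphi(t)\|_{C^{1,\gamma}} + \sup_{0<t\le T} t\,\|\varphi(t)\|_{C^{4,\gamma}},
\end{equation*}
where the weight $t$ matches the smoothing rate of the semigroup $e^{-\frac{t}{4}|\nabla|^3}$ (which gains three derivatives at the cost of $t^{-1}$). The map $\mathcal{T}\varphi(t) = e^{-\frac{t}{4}|\nabla|^3}\varphi_0 + \int_0^t e^{-\frac{1}{4}(t-s)|\nabla|^3}\mathcal{N}(\varphi(s),\sigma(\varphi(s)))\,ds$ is then to be shown to be a contraction on a small closed ball $B_R\subset X_T$ for $T$ small enough, yielding a unique local strong solution in the sense of Definition \ref{def_sol}.

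Three ingredients will be needed. First, semigroup estimates $\|e^{-\frac{t}{4}|\nabla|^3}f\|_{C^{k+m,\gamma}} \lesssim t^{-m/3}\|f\|_{C^{k,\gamma}}$ (standard for Fourier multiplier semigroups on $\mathbb{S}^1$), together with the convergence $e^{-\frac{t}{4}|\nabla|^3}\varphi_0\to\varphi_0$ in $C^{1,\gamma}$ as $t\to 0^+$, which is precisely where the hypothesis $\varphi_0\in h^{1,\gamma}$ is used. Second, invertibility and Lipschitz dependence for the tension problem \eqref{tdp}: the analysis in Section \ref{sec3} gives $\|\sigma(\varphi)\|_{C^{2,\gamma}}\lesssim \|\varphi\|_{C^{4,\gamma}}$, with constants depending only on $\|\varphi\|_{C^{1,\gamma}}$ and $|\varphi|_*$, based on the splitting $\mathcal{L} = \tfrac{1}{4}|\nabla| + (\text{compact})$. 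Third, a lower-order bound on the nonlinearity, roughly $\|\mathcal{N}(\varphi,\sigma)\|_{C^\gamma}\leq Q(\|\varphi\|_{C^{1,\gamma}},|\varphi|_*)\bigl(\|\varphi\|_{C^{3,\gamma}}+\|\sigma\|_{C^{1,\gamma}}\bigr)$, relying on the regularizing effect of $\mathcal{R}$ and of the commutators $[\mathcal{H},\bm{\tau}]$, $[\mathcal{H},\bm{n}]$ from Section \ref{sec2}, so that $\mathcal{N}$ is strictly lower order than $|\nabla|^3$. Feeding these into the Duhamel integral produces bounds of the form $\int_0^t (t-s)^{-\alpha}s^{-\beta}\,ds$ that close the weighted norms with a positive power of $T$ providing smallness.

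The principal obstacles in my view lie elsewhere than in these estimates. The first is maintaining uniform invertibility of $\mathcal{L}$ along the iteration: one needs both $|\varphi(t)|_*\ge c>0$ and that $\bm{X}(t)$ stays away from circles, since the null-space analysis in Section \ref{probform} shows $\mathcal{L}$ loses injectivity precisely on circles. I would handle this by shrinking $T$ so that a continuity argument forces $\bm{X}(t)$ to remain a small $C^{1,\gamma}$-perturbation of $\bm{X}(0)$; invertibility of $\mathcal{L}$ is then open under such perturbations, and the non-circular condition is automatic as soon as $|\Omega|<\pi$ strictly, by the isoperimetric discussion following \eqref{Eneqn} (the equality case yields a stationary circle with trivial dynamics). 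The second obstacle is the tight H\"older bookkeeping forced by the fact that the initial data is three derivatives short of the equation's order: in the cubic nonlinearity $(1+\p_s\varphi)^3$ and in the coupling $\sigma(1+\p_s\varphi)$, top derivatives must always be placed on the single time-weighted factor while low-regularity factors are controlled in $C^{1,\gamma}$ uniformly in $t$. Uniqueness in $X_T$ follows from the same contraction estimate applied to the difference of two solutions.

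For the instantaneous smoothing $\varphi\in C^\infty((0,T]\times\mathbb{S}^1)$, I would bootstrap on the regularity scale: for any $0<t_0<T$ we have $\varphi(t_0)\in C^{4,\gamma}\subset h^{k,\gamma'}$ for every $k\le 4$ and $\gamma'<\gamma$, so the fixed-point argument can be re-run on $[t_0,T]$ in a higher-regularity version of $X_T$ (replacing the base $C^{1,\gamma}$ by $C^{k,\gamma'}$), and uniqueness identifies the new solution with $\varphi$, yielding $\varphi|_{[t_0,T]}\in C([t_0,T];C^{k,\gamma'})$ for every $k$. Time regularity is recovered by differentiating \eqref{theta_eqn} in $t$ and iterating the same scheme. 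Letting $t_0\to 0^+$ then gives the full $C^\infty$ smoothness on $(0,T]\times\mathbb{S}^1$ claimed in the theorem.
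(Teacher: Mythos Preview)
Your overall architecture matches the paper exactly: the same time-weighted space $X_T$, Banach fixed point on the Duhamel map, tension solved at each instant via the Fredholm theory of Section~\ref{sec3}, and a bootstrap for smoothing. There is, however, a concrete problem with the nonlinear estimate you propose.

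Estimating $\mathcal{N}(\varphi,\sigma)$ only in $C^\gamma$ does not close the $C^{4,\gamma}$ part of the $X_T$ norm. To pass from $C^\gamma$ to $C^{4,\gamma}$ via the semigroup costs $(t-\tau)^{-4/3}$, which is not integrable at $\tau=t$; even upgrading to $C^{1,\gamma}$ leaves you with $(t-\tau)^{-1}$, still borderline divergent. The paper's fix (Propositions~\ref{nonlinear_est} and~\ref{nonli_est}) is to estimate $\mathcal{N}$ in $C^{1,\gamma+\varepsilon}$ for a small $\varepsilon>0$, so that the semigroup only needs to recover $3-\varepsilon$ derivatives and the kernel $(t-\tau)^{-1+\varepsilon/3}$ is integrable. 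This extra $\varepsilon$ is not free: it forces you to use the commutator and remainder estimates (Lemma~\ref{CommutatorLem}, Proposition~\ref{RestimateProp}) at one derivative higher than your sketch suggests, and the resulting blow-up $\tau^{(\gamma-\delta)/3-1}$ in $\|\mathcal{N}(\tau)\|_{C^{1,\gamma+\varepsilon}}$ is only just integrable. This is precisely the ``tight H\"older bookkeeping'' you flagged, but the target space you wrote down is the wrong one.

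Two smaller remarks. Your isoperimetric argument (area $<\pi$ implies non-circle) applies to actual solutions but not to iterates in the fixed-point scheme, which do not conserve area; the paper instead controls $C_\varphi$ uniformly over the iteration ball via continuity of $(I+K(\varphi))^{-1}$ in $C^1$ (Proposition~\ref{Tensionbdn}) together with compact embedding $C^{1,\gamma}\hookrightarrow C^1$. For smoothing, the paper does not re-run the fixed point in higher spaces but bootstraps directly on the Duhamel formula from $t_0>0$ (Proposition~\ref{smoothing}), using the sharper commutator bound \eqref{comm2} to avoid losing a derivative at each step; your re-run approach would also work but requires managing the possibly shorter existence time at each stage.
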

We note in particular that smoothness of the solution for positive time ensures that the solution obtained
 is also a solution to the original PDE formulation of the problem discussed in Section \ref{probform}. 

Let us discuss our choice of function space. 
The existence and uniqueness part of Theorem \ref{main_theo} will be done via a fixed point argument on the mild solution formula \eqref{duhamel} (see Theorem \ref{theo_fixed}).
Note that $\mc{N}(\varphi,\sigma)$ in \eqref{duhamel} contains the third derivative of $\varphi$, and thus, a straightforward application of semigroup theory 
will only yield solutions with initial data in $\varphi_0\in C^{3,\gamma}, \gamma>0$. In order to expand the space of admissible initial data, 
we make use of the time-weighted space
\begin{equation}\label{space_fixed}
    \begin{aligned}
        X(T)&=C([0,T];C^{1,\gamma}(\mathbb{S}^1))\cap C( (0,T]; C^{4,\gamma}(\mathbb{S}^1)) \cap L^\infty (0,T; t C^{4,\gamma}(\mathbb{S}^1)),
    \end{aligned}
\end{equation}
with norm
\begin{equation*}
    \begin{aligned}
        \|f\|_{X(T)}&:=\|f\|_{C([0,T] ;C^{1,\gamma})}+\|t\|f(t)\|_{C^{4,\gamma}}\|_{L^\infty([0,T])}\\
        &= \sup_{t\in[0,T]}\|f(t)\|_{C^{1,\gamma}}+\sup_{t\in[0,T]}t\|f(t)\|_{C^{4,\gamma}}.
    \end{aligned}
\end{equation*}
We note that the restriction $\varphi_0\in C^{1,\gamma}$ comes from the regularity needed to solve the tension determination problem.

As part of the fixed point scheme, the tension equation must be solved at each fixed time $t>0$.
We define 
\begin{equation*}
    \begin{aligned}
        d(\varphi)=\min_{a\in\mathbb{S}^1}\|\varphi(s)-a\|_{C^0}.
    \end{aligned}
\end{equation*}
Since we assume that the length of the interface described by $\varphi$ is $2\pi$, the interface is not a circle if and only if
\begin{equation*}
    \begin{aligned}
        d(\varphi)>0.
    \end{aligned}
\end{equation*}
Because the dynamics for a circular interface is trivial, we can assume that $d(\varphi_0)>0$.
The proof of Theorem \ref{main_theo} shows that the solution time $T$ can be taken to depend only on the smoothness of $\varphi_0$, $d(\varphi_0)$ and $\abs{\varphi_0}_*$
(see Remark \ref{length_of_T}).

A parabolic bootstrapping scheme (see Proposition \ref{smoothing}) allows to show that the parabolic character of the equation regularizes the solution for positive times.

In Section \ref{sec5}, we study numerically the asymptotic behavior of the dynamics.
Let us first discuss the steady states.
Note that, at steady state, by \eqref{Eneqn}, the energy dissipation must be equal to $0$, and hence, $\nabla_{\rm S}\bm{u}=0$. 
Since $\bm{u}\to 0$ as $\bm{x}\to \infty$, this implies that $\bm{u}=0$ in $\mathbb{R}^3\backslash \Omega$. By \eqref{ujump}, 
we see that $\bm{u}=0$ in $\Omega$ as well.
From \eqref{uin} and \eqref{uout} we thus see that the pressure $p$ must be spatially constant within $\Omega$
and $\mathbb{R}^3\backslash \Omega$ respectively. Let $p_{\rm in}$ and $p_{\rm out}$ be these constant pressures 
respectively in $\Omega$ and $\mathbb{R}^3\backslash \Omega$.
Condition \eqref{stressjump} yields:
\begin{equation*}
-\Delta p\bm{n}=\paren{\p_s^3 \varphi+\frac{1}{2}(1+\p_s\varphi)^2-\sigma (1+\p_s\varphi)}\bm{n}+(\p_s\sigma)\bm{\tau},\; \Delta p=p_{\rm in}-p_{\rm out}.
\end{equation*}
From the above, we see that:
\begin{equation*}
\sigma=\sigma_\star=\text{const.}
\end{equation*}
The equation satisfied by $\kappa=1+\p_s\varphi$ is:
\begin{equation*}
\p_s^2 \kappa+\frac{1}{2}\kappa^3=\sigma_\star \kappa-\Delta p.
\end{equation*} 
This same equation can be obtained as the Euler-Lagrange equation for minimizing the bending energy (the curvature squared integral, 
see \eqref{Eneqn}) under the constraint of fixed area and length of the interface. In this context, the constants 
$\Delta p$ and $\sigma_\star$ correspond to the Lagrange multipliers associated with the fixed area and length constraints 
respectively. Veerapaneni, Raj, Biros and Purohit  study this problem in \cite{VeerapaneniRajBirosPurohit09}, where they obtained multiple $m$-fold symmetric steady states.

In Section \ref{sec5}, we develop a numerical method based on our analytical formulation, and perform numerical simulations on the system.
The observed behavior indicates that only the two-lobe steady state is stable, while the other $m$-lobe symmetric states, 
$m\geq 3$ correspond to saddle points.

\subsection{Historical Considerations and Related Problems}
The problem addressed in this paper is a classic model in biophysics. Indeed, in order to explain the biconcave shape of red blood cells, Canham \cite{Canham70} introduced a bending energy dependent on the squared mean curvature. Soon later, to model the closed lipid bilayer in cells, Helfrich 
formulated the renowned Canham-Helfrich energy \cite{Helfrich73, DeulingHelfrich76} with density given by 
$$\frac12(\kappa-\kappa_0)^2+cK,$$
where $\kappa$ denotes the mean curvature, $\kappa_0$ the so-called spontaneous curvature, $c$ is a constant and $K$ the Gaussian curvature (which by the Gauss-Bonnet theorem adds no effect for genus zero surfaces).
The case $\kappa_0=0$, commonly chosen when asymmetries are neglected, is called the Willmore energy \cite{Willmore65}. We consider here the classic model for two-dimensional vesicles such as red blood cells: a closed interface with prescribed length and enclosed area (i.e., with given isoperimetric ratio) whose bending force is dictated by the Willmore energy.

The steady version of the problem has given rise to interesting questions in the calculus of variations and geometric measure theory. In fact, finding the equilibrium shape of the vesicles should amount to minimizing the Willmore, or the Canham-Helfrich, functional with given isoperimetric ratio. See \cite{Schygula-12, MondinoScharrer-20} and the references therein for the existence and regularity of such minimizers in two and three dimensions, and also \cite{NagasawaTakagi03} for the stability of some minimizers bifurcating from the sphere.
Particular analytic steady-state shapes for two-dimensional vesicles (i.e., our interface) are found in \cite{VeerapaneniRajBirosPurohit09} (see also \cite{MarplePurohitVeerapaneni15}) in terms of elliptic integrals.

On the contrary, little rigorous analysis has been performed on the dynamical version of the problem. 
Other than the works \cite {Lengeler-18, KohneLengeler-18}, where the three-dimensional problem is studied for initial data close enough to a smooth surface via the Hanzawa transform,
most results are of computational nature. Indeed, following the immersed boundary method pioneered by Peskin in the study of heart valves \cite{Peskin77, Peskin02}, computational works dealing with this model and closely related ones are avialable \cite{VeerapaneniGueyffierZorinBiros09, RahimianVeerapaneniBiros10}, also in three dimensions  \cite{SeolHuKimLai16, LaiSeol22}. On the analytical side, we point the recent well-posedness results on the related \textit{Peskin problem} \cite{LinTong19, MoriRodenbergSpirn19}, which models the dynamics of an immersed elastic (extensible) membrane, and the  consequent improvements in terms of initial regularity and extended features \cite{CameronStrain23, ChenNguyen23, GancedoGraneroScrobogna21, GJMoriStrain23, GJKuoMoriStrain23, Li21, Tong21, Tong24, TongWei23, GJHaziot2023}. 

We also point out that the inextensibility constraint features in many other fluid structure interaction models. One such model is the swimming filament model in which an inextensibility constraint is imposed on the deforming interface \cite{fauci1988computational,teran2010viscoelastic,thomases2014mechanisms}. We refer to \cite{mori2023well} for an analytical study on a simpler version of this problem. 
    
\section{Calculus Estimates}\label{sec2}

We summarize in this section some calculus estimates that will be repeatedly used along the proof of local well-posedness in the following section.

\begin{lemma}[Interpolation]\label{InterpolationLem}
\begin{enumerate}
    \item 
    Let $k_1,k_2\in\mathbb{Z}$ with $0\leq k_1< k_2$. Then, for  any $k\in \mathbb{Z}$ between $k_1$ and $k_2$,
\begin{align*}
    \norm{f}_{C^k}\leq C \norm{f}_{C^{k_1}}^\frac{k_2-k}{k_2-k_1}\norm{f}_{C^{k_2}}^\frac{k-k_1}{k_2-k_1},
\end{align*}
where $C$ depends on $k_1, k_2$ and $k$.
    \item 
    Let $0<\alpha_1<\alpha_2$ and denote $\alpha=s\alpha_1+(1-s)\alpha_2$ with $s\in(0,1)$. Then for any $\beta \notin \mathbb{Z}$ with $\alpha_1\leq \beta\leq\alpha$, or any $\beta \in \mathbb{Z}$ with $\alpha_1\leq \beta<\alpha$
    \begin{equation*}
    \begin{aligned}
        \|f\|_{C^{\beta}}&\leq C \|f\|_{C^{\alpha_1}}^s\|f\|_{C^{\alpha_2}}^{1-s},
    \end{aligned}
    \end{equation*}
    where $C$ depends on $\alpha_1,\alpha_2$ and $\alpha$.
\end{enumerate}
Moreover, given $f\in X\paren{T}$, with $X(T)$ defined in \eqref{space_fixed} with $\gamma\in(0,1)$, if  $\alpha\in (0,3)$ and $\gamma+\alpha$ is not an integer, we have for all $t\in(0,T]$,
\begin{align}
    \norm{f}_{C^{1+\gamma+\alpha}}\leq C t^{-\frac{\alpha}{3}}\norm{f}_{X\paren{T}}, \quad \alpha\in [0, 3], \label{interplationbddeq03}
\end{align}
where $C$ depends on $\gamma$ and $\alpha$.
If  $\gamma+\alpha$ is an integer,  for any $\delta\in (0, 3-\alpha)$ and $t\in(0,T]$,
\begin{align}
    \norm{f}_{C^{1+\gamma+\alpha}}\leq C t^{-\frac{\alpha+\delta}{3}}\norm{f}_{X\paren{T}},\label{interplationbddeq04}
\end{align}
where $C$ depends on $\gamma, \alpha$ and $\delta$.
\end{lemma}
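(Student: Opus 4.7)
The plan is as follows. Parts (1) and (2) are classical Hölder interpolation inequalities, which I would establish using standard arguments. The building block for (1) is the one-dimensional bound $\|f'\|_{C^0} \leq C \|f\|_{C^0}^{1/2}\|f''\|_{C^0}^{1/2}$, proved by writing $f'(s) = h^{-1}(f(s+h)-f(s)) - h^{-1}\int_0^h (h-t)f''(s+t)\,dt$, bounding each piece trivially, and optimizing over $h > 0$; iteration on derivatives yields (1) for general integer exponents. For (2), with non-integer endpoints the interpolation inequality can be proved by a direct regime-split of $|f(s_1)-f(s_2)|/|s_1-s_2|^\beta$ using a threshold optimized from the two seminorms, or more systematically via the Littlewood--Paley / Besov characterization of H\"older spaces, where interpolation is transparent. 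The integer restriction $\beta < \alpha$ in (2) reflects the fact that, for integer $\beta$, $C^\beta$ is strictly smaller than $\bigcap_{\varepsilon > 0} C^{\beta-\varepsilon}$, so the bound cannot be saturated when $\beta$ coincides with the intermediate exponent.

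The time-weighted estimates \eqref{interplationbddeq03}--\eqref{interplationbddeq04} are direct consequences of (2) combined with the definition of $\|\cdot\|_{X(T)}$. The strategy is to interpolate $C^{1+\gamma+\alpha}$ between the non-integer endpoints $\alpha_1 = 1+\gamma$ and $\alpha_2 = 4+\gamma$. When $\gamma + \alpha \notin \mathbb{Z}$, the target exponent $\beta = 1+\gamma+\alpha$ is non-integer, so (2) with $s = 1-\alpha/3$ yields
\begin{align*}
\|f\|_{C^{1+\gamma+\alpha}} \leq C \|f\|_{C^{1+\gamma}}^{1-\alpha/3}\|f\|_{C^{4+\gamma}}^{\alpha/3}.
\end{align*}
The bounds $\|f(t)\|_{C^{1+\gamma}} \leq \|f\|_{X(T)}$ and $\|f(t)\|_{C^{4+\gamma}} \leq t^{-1}\|f\|_{X(T)}$, both immediate from the definition of $X(T)$, then produce $\|f(t)\|_{C^{1+\gamma+\alpha}} \leq C t^{-\alpha/3}\|f\|_{X(T)}$, which is \eqref{interplationbddeq03}.

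In the borderline case $\gamma + \alpha \in \mathbb{Z}$, the integer target exponent $\beta = 1+\gamma+\alpha$ forces the strict inequality $\beta < \alpha'$ required by (2), so the choice $s = 1-\alpha/3$ is not admissible. The remedy is to perturb: for any $\delta \in (0, 3-\alpha)$, take $s = 1-(\alpha+\delta)/3$ so that the intermediate exponent becomes $1+\gamma+\alpha+\delta$, strictly above $\beta$, and the integer case of (2) applies. Substituting the same $X(T)$ bounds produces $\|f(t)\|_{C^{1+\gamma+\alpha}} \leq C t^{-(\alpha+\delta)/3}\|f\|_{X(T)}$, which is \eqref{interplationbddeq04}. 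The main subtlety of the whole proof is precisely this integer/non-integer dichotomy of the H\"older scale; once it is handled by the $\delta$-perturbation, the rest is bookkeeping, and the time weights enter solely through the explicit factor in $\|\cdot\|_{X(T)}$.
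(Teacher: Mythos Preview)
The paper states this lemma without proof, treating parts (1) and (2) as classical H\"older interpolation facts and the time-weighted bounds \eqref{interplationbddeq03}--\eqref{interplationbddeq04} as immediate corollaries. Your proposal is correct and supplies exactly the argument the paper omits: interpolating between the two non-integer endpoints $1+\gamma$ and $4+\gamma$ that define the $X(T)$ norm, with the $\delta$-perturbation to handle the integer target exponent, is precisely what is intended.
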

Next, we show commutator estimates for the Hilbert transform,
\begin{equation*}
    [\mc{H},f]g(s)=\frac{1}{\pi}\int_{\mathbb{S}}\frac{f(\eta)-f(s)}{2\tan{(\frac{s-\eta}{2})}}g(\eta)d\eta,
\end{equation*}
and for the related operator $\mc{R}$ defined in \eqref{Roperator}. 
\begin{lemma}[Commutator]\label{CommutatorLem} 
 Let $0\leq k\in\mathbb{Z}$, $\alpha\in(0,1)$. It holds that:  
\begin{enumerate}
    \item If $f\in C^{k+1}(\mathbb{S}^1)$, $g\in C^k(\mathbb{S}^1)$, then
\begin{equation}\label{comm1}
    \|[\mathcal{H},f]g\|_{C^{k,\alpha}}\leq C \big(\|f\|_{C^{k+1}}\|g\|_{C^0}+\|f\|_{C^{1}}\|g\|_{C^k}\big). 
\end{equation}
\item If $f\in C^{k+2}(\mathbb{S}^1)$, $g\in C^{k}(\mathbb{S}^1)$,
\begin{equation}\label{comm2}
    \|[\mathcal{H},f]{g}\|_{C^{1+k,\alpha}}\leq C \big(\|f\|_{C^{2+k}}\|g\|_{C^{0}}+\|f\|_{C^{2}}\|g\|_{C^{k}}\big).
\end{equation}

\end{enumerate}
\end{lemma}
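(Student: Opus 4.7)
The plan is to prove both estimates by reducing them, via a Leibniz-type commutator identity and interpolation, to two base cases ($k=0$ for each part) that are handled directly at the kernel level. The key algebraic tool is the factorization
\begin{equation*}
K(s,\eta):=\frac{f(\eta)-f(s)}{2\tan((s-\eta)/2)}=-a(s,\eta)\,b(s-\eta),
\end{equation*}
with $a(s,\eta)=\int_0^1 f'(s+t(\eta-s))\,dt$ and $b(z)=z/(2\tan(z/2))$ smooth and bounded on the period, which ``uses up'' one derivative of $f$ to cancel the singularity of $\cot((s-\eta)/2)$.

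For the base case of (1), $|K|\le C\|f\|_{C^1}$ is immediate. For the $C^\alpha$ seminorm at points $s_1,s_2$, I split $\mathbb{S}^1$ at the scale $|s_1-s_2|$: the close region $|\eta-s_1|\lesssim|s_1-s_2|$ is controlled by the pointwise bound on $K$, while on the far region a mean value argument with the estimate $|\partial_s K(s,\eta)|\le C\|f\|_{C^1}/|s-\eta|$ (which follows by separately bounding the two summands of $\partial_s K$) yields a $|s_1-s_2|\log(1/|s_1-s_2|)$ term. Both contributions are absorbed into $|s_1-s_2|^\alpha$ for any $\alpha\in(0,1)$. For the base case of (2), differentiation under the integral combined with the factorization gives $|\partial_s K|\le C\|f\|_{C^2}$ pointwise. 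The $C^\alpha$ estimate on $\partial_s K$ is obtained analogously, this time invoking the second-order cancellation in $\partial_s^2 K$ (after a Taylor expansion of $f(\eta)-f(s)$ to second order in $\eta-s$) to produce the integrable bound $|\partial_s^2 K(s,\eta)|\le C\|f\|_{C^2}/|s-\eta|$ used in the far region.

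For the inductive step, I use that $\mathcal{H}$ is a Fourier multiplier, hence $\partial_s\mathcal{H}=\mathcal{H}\partial_s$; expanding $[\mathcal{H},f]g=\mathcal{H}(fg)-f\mathcal{H}g$ gives the pointwise identity $\partial_s[\mathcal{H},f]g=[\mathcal{H},f']g+[\mathcal{H},f]g'$, which iterates to
\begin{equation*}
\partial_s^k[\mathcal{H},f]g=\sum_{j=0}^k\binom{k}{j}[\mathcal{H},\partial_s^{k-j}f]\,\partial_s^j g.
\end{equation*}
Applying the base case of (1) to each summand yields $\|[\mathcal{H},\partial_s^{k-j}f]\partial_s^j g\|_{C^\alpha}\le C\|f\|_{C^{k-j+1}}\|g\|_{C^j}$, and Lemma \ref{InterpolationLem} together with Young's inequality absorbs the mixed products $0<j<k$ into $\|f\|_{C^{k+1}}\|g\|_{C^0}+\|f\|_{C^1}\|g\|_{C^k}$. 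Part (2) is identical but with the $C^{1,\alpha}$ base case and a shift of indices.

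The main obstacle I anticipate is the kernel analysis in the base case of (2): bounding $\partial_s^2 K$ by $O(1/|s-\eta|)$ rather than the crude $O(1/|s-\eta|^2)$ requires exploiting the cancellation between the terms arising from differentiating $1/(2\tan((s-\eta)/2))$ twice, which in turn uses the full $C^2$ regularity of $f$ and the smoothness of the remainder $z/(2\tan(z/2))$. Once these kernel bounds are in hand, the close/far splittings are routine and the Leibniz induction is purely algebraic.
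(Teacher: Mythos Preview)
Your proposal is correct, and the induction step via the Leibniz identity $\partial_s^k[\mathcal{H},f]g=\sum_j\binom{k}{j}[\mathcal{H},\partial_s^{k-j}f]\partial_s^j g$ together with interpolation and Young's inequality is exactly what the paper does.

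The base cases are handled differently. For part (1) the paper simply cites \cite{KuoLaiMoriRodenberg2023}, whereas you provide a self-contained close/far argument using the factorization $K=-a\,b$; this is fine and actually more informative for a reader. For part (2) the paper first splits off the analytic remainder $R_c(z)=\tfrac12\cot(z/2)-1/z$ and works with the kernel $K_0(s,\eta)=(f(\eta)-f(s))/(\eta-s)$; it then uses the structural identity $(\partial_s+\partial_\eta)K_0=K_1$ (with $K_1$ built from $\partial_s f$) to write $\partial_s\!\int K_0 g=\int K_1 g-\int(\partial_\eta K_0)g$, handling the first integral by the $C^\alpha$ base case applied to $\partial_s f$ and the second by the bound $|\partial_s\partial_\eta K_0|\le C\|f\|_{C^2}/|s-\eta|$. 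Your route bypasses this and bounds $\partial_s^2 K$ directly: the Taylor expansion $f(\eta)-f(s)=-f'(s)z+\tfrac12 f''(\xi)z^2$ makes the $1/z^2$ terms cancel and leaves $(f''(\xi)-f''(s))/z$, giving the required $C\|f\|_{C^2}/|s-\eta|$ bound. Both arguments are equivalent in strength; the paper's $(\partial_s+\partial_\eta)$ trick generalizes more readily to higher-order kernels, while your direct computation is shorter here and avoids the auxiliary splitting $\cot=1/z+R_c$.
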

\begin{proof}
The base case $k=0$ for $(1)$ is proved in \cite{KuoLaiMoriRodenberg2023}:
\begin{equation}\label{comm0}
    \|[\mathcal{H},f]g\|_{C^{\alpha}}\leq C \|f\|_{C^{1}}\|g\|_{C^0} \quad \forall  \alpha\in(0,1).
\end{equation}
    Through Leibniz product rule and \eqref{comm0}, we have 
    \begin{equation*}
        \norm{\PPDD{k}{s}{\paren{[\mathcal{H},f]g}}}_{C^{0,\alpha}}
        =\norm{\sum_{i=0}^k[\mathcal{H},\PPDD{i}{s}{f}]\PPDD{k-i}{s}{g}}_{C^{0,\alpha}}
        \leq C\sum_{i=0}^k \norm{\PPDD{i}{s}{f}}_{C^{1}}\norm{\PPDD{k-i}{s}{g}}_{C^{0}}.
    \end{equation*}
    For each $i$, by the Sobolev interpolation inequality and Young's inequality, we obtain
    \begin{equation*}
    \begin{aligned}
        \norm{\PPDD{i}{s}{f}}_{C^{1}}\norm{\PPDD{k-i}{s}{g}}_{C^{0}}
        \leq C \norm{{f}}_{C^{1}}^{\frac{k-i}{k}}\norm{{f}}_{C^{k+1}}^\frac{i}{k}\norm{{g}}_{C^{0}}^\frac{i}{k}\norm{{g}}_{C^{k}}^\frac{k-i}{k}\\
        \leq C \paren{\norm{{f}}_{C^{k+1}}\norm{{g}}_{C^{0}}+\norm{{f}}_{C^{1}}\norm{{g}}_{C^{k}}},
    \end{aligned}  
    \end{equation*}
    where $C$ depends on $i$ and $k$.
    Therefore, we obtain \eqref{comm1}.
   
   To prove the second point, we first split $[\mathcal{H},f]g$ into
    \begin{equation*}
    \begin{aligned}
        [\mathcal{H},f]g(s)= \frac{1}{\pi} R_c\ast \paren{fg}- \frac{1}{\pi}f \paren{R_c\ast g}-\frac{1}{2\pi}\int_{\mathbb{S}}K_0\paren{s,\eta}g(\eta)d\eta,
    \end{aligned}
    \end{equation*}
    where $\ast$ is the convolution operator, and
    \begin{equation*}
    \begin{aligned}
        R_c(s)=&\frac{1}{2}\cot{\frac{s}{2}}-\frac{1}{s},\\
        K_i\paren{s,\eta}=&\frac{\PPDD{i}{\eta}{f}(\eta)-\PPDD{i}{s}{f}(s)}{\eta-s}.
    \end{aligned}
    \end{equation*}
    By the Taylor expansion of $\cot(s)$, $R_c(s)$ is an analytic function on $\mbs$, so
    \begin{equation*}
        \norm{[R_c\ast,f]g}_{C^{1,\alpha}}\leq C\|f\|_{C^{2}}\|g\|_{C^0},
    \end{equation*}
    where $C$ depends on the expansion of $\cot(s)$.
    For the second part, we use that
    \begin{equation*}
        \paren{\PPD{s}{}+\PPD{\eta}{}}K_0\paren{s,\eta}=K_1\paren{s,\eta},
    \end{equation*}
    \begin{equation*}
        \PPD{s}{}\int_{\mathbb{S}}K_0\paren{s,\eta}g(\eta)d\eta=\int_{\mathbb{S}}K_1\paren{s,\eta}g(\eta)d\eta-\int_{\mathbb{S}}\PPD{\eta}{} K_0\paren{s,\eta}g(\eta)d\eta.
    \end{equation*}
    Through the proof of \cite[Propostion 2.8]{KuoLaiMoriRodenberg2023}, for all $0<\alpha<1$,
    \begin{equation*}
        \norm{\int_{\mathbb{S}}K_1\paren{s,\eta}g(\eta)d\eta}_{C^{1,\alpha}}\leq C\|\PPD{s}{f}\|_{C^{1}}\|g\|_{C^0}\leq C\|f\|_{C^{2}}\|g\|_{C^0}.
    \end{equation*}
    Next, since
    \begin{equation*}
    \begin{aligned}
        \PPD{s}{}\PPD{\eta}{} K_0\paren{s,\eta}
        &=\frac{1}{\paren{\eta-s}^2}\paren{\PPD{s}{} f\paren{s}-\frac{f\paren{\eta}-f\paren{s}}{\eta-s}}\\
        &\quad+\frac{1}{\paren{\eta-s}^2}\paren{\PPD{\eta}{} f\paren{\eta}-\frac{f\paren{\eta}-f\paren{s}}{\eta-s}},
    \end{aligned}
    \end{equation*}
    we have
    \begin{equation*}
        \abs{\PPD{s}{}\PPD{\eta}{} K_0\paren{s,\xi}}\leq C \|f\|_{C^{2}} \frac{1}{\abs{s-\eta}}.
    \end{equation*}
    Therefore, for all $s,\eta\in \mbs$,
    \begin{equation*}
        \abs{\Delta \int_{\mathbb{S}}\PPD{\xi}{} K_0\paren{s,\eta}g(\xi)d\xi}\leq C\|f\|_{C^{2}}\|g\|_{C^0} \abs{s-\eta}\log\abs{s-\eta},
    \end{equation*}
    and we finally obtain
    \begin{equation*}
        \norm{[\mc{H},f]g}_{C^{1,\alpha}}\leq C\|f\|_{C^{2}}\|g\|_{C^0}.
    \end{equation*}
    Then, the case $k\geq1$ follows from the $k=0$ one by taking derivatives and interpolation.

\end{proof}

\begin{prop}[Remainder Estimates]\label{RestimateProp} 
Consider the operator $\mathcal{R}$ defined in \eqref{Roperator} and assume that  $|\bm{X}|_*>0$. Let $0\leq k\in\mathbb{Z}$, $\alpha\in(0,1)$. It holds that:  
\begin{enumerate}
    \item If $\bm{X}\in C^{2+k}(\mathbb{S}^1)$, then $\mathcal{R}$  maps $C^{k}(\mathbb{S}^1)$ into $C^{k,\alpha}(\mathbb{S}^1)$ and
\begin{equation}\label{Rest1}
        \|\mathcal{R}\bm{u}\|_{C^{k,\alpha}}
        \leq C \frac{\|\bm{X}\|_{C^2}}{|\bm{X}|_*^2}\paren{\|\bm{X}\|_{C^2}\|\bm{u}\|_{C^{k}}+\|\bm{X}\|_{C^{2+k}}\|\bm{u}\|_{C^{0}}}.
\end{equation}
\item If $\bm{X}\in C^{1+k,\alpha}(\mathbb{S}^1)$,  then, for $\alpha\neq1/2$, 
\begin{equation}\label{Rest2}
        \|\mathcal{R}\bm{u}\|_{C^{k+\lfloor2\alpha\rfloor,2\alpha-\lfloor2\alpha\rfloor}}
        \leq C \frac{\|\bm{X}\|_{C^1}^2}{|\bm{X}|_*^3}\big(\|\bm{X}\|_{C^{1+k,\alpha}}\|\bm{u}\|_{C^{0,\alpha}}+\|\bm{X}\|_{C^{1,\alpha}}\|\bm{u}\|_{C^{k,\alpha}}\big),
\end{equation}
and if $\alpha=1/2$, then for any $\beta\in(0,1)$,
\begin{equation*}
        \|\mathcal{R}\bm{u}\|_{C^{k,\beta}}
        \leq C \frac{\|\bm{X}\|_{C^1}^2}{|\bm{X}|_*^3}\big(\|\bm{X}\|_{C^{1+k,\alpha}}\|\bm{u}\|_{C^{0,\alpha}}+\|\bm{X}\|_{C^{1,\alpha}}\|\bm{u}\|_{C^{k,\alpha}}\big).
\end{equation*}
\end{enumerate}
\end{prop}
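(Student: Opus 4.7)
The plan is to bootstrap from the base cases $k=0$ of both (1) and (2), which follow from the analogous estimates in \cite{KuoLaiMoriRodenberg2023}, by induction on $k$, in the same spirit as the proof of Lemma \ref{CommutatorLem} above. The core idea combines a near-translation-invariance of the kernel with integration by parts in $\eta$, together with Lemma \ref{InterpolationLem} to collect the resulting estimates into the product structure on the right-hand sides.

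The key algebraic observation is that $(\partial_s + \partial_\eta) K(s,\eta) = K_\star(s,\eta)$, where $K_\star$ has the same singularity type as $K$ but with one additional derivative of $\bm{X}$ absorbed into its coefficients. Indeed, the cotangent correction is exactly translation-invariant, and the $\Delta\bm{X}$-dependent pieces satisfy the identity by the chain rule, since $\partial_s\Delta\bm{X}=\partial_s\bm{X}(s)$ and $\partial_\eta\Delta\bm{X}=-\partial_\eta\bm{X}(\eta)$. Using this identity and integrating by parts in $\eta$ gives
\begin{equation*}
\partial_s \int_{\mathbb{S}^1} K(s,\eta)\bm{u}(\eta)\,d\eta = \int_{\mathbb{S}^1} K_\star(s,\eta)\bm{u}(\eta)\,d\eta + \int_{\mathbb{S}^1} K(s,\eta)\partial_\eta\bm{u}(\eta)\,d\eta.
\end{equation*}
Iterating this $k$ times, $\partial_s^k(\mathcal{R}\bm{u})$ becomes a finite sum of integrals $\int_{\mathbb{S}^1} \widetilde{K}^{(j)}(s,\eta)\partial_\eta^j \bm{u}(\eta)\,d\eta$ for $j=0,\ldots,k$, where each $\widetilde{K}^{(j)}$ is a kernel of the same generic type as $K$ but involves derivatives of $\bm{X}$ of total order at most $2+k-j$. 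Applying the $k=0$ estimate to each term yields bounds of the form $\frac{\|\bm{X}\|_{C^2}}{|\bm{X}|_*^2}\|\bm{X}\|_{C^{2+k-j}}\|\bm{u}\|_{C^j}$ in the setting of (1), which then collapse into the right-hand side of \eqref{Rest1} via Lemma \ref{InterpolationLem}(1) and Young's inequality, exactly as at the end of the proof of Lemma \ref{CommutatorLem}. Part (2) proceeds analogously, with the $C^j$-norms replaced by $C^{j,\alpha}$ and Lemma \ref{InterpolationLem}(2) used in place of Lemma \ref{InterpolationLem}(1); the output regularity $C^{k+\lfloor 2\alpha\rfloor, 2\alpha-\lfloor 2\alpha\rfloor}$ and the special role of $\alpha=1/2$ reflect the standard $C^\alpha\to C^{2\alpha}$ smoothing of a singular integral whose kernel is Hölder-$\alpha$ in $s$, obtained by the usual near/far splitting and exhibiting a logarithmic obstruction precisely at $2\alpha=1$.

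The main obstacle I foresee is not the inductive bookkeeping but rather tracking the exact singularity structure of the iterated kernels $\widetilde{K}^{(j)}$ at the low-regularity level $\bm{X}\in C^{1+k,\alpha}$ needed for part (2). A naive differentiation of $K$ would produce kernels too singular to handle by $k=0$ estimates; it is precisely the cancellation in $(\partial_s+\partial_\eta)K$ that keeps them at Cauchy-kernel strength, and maintaining this through each differentiation requires Taylor expansions of $\bm{X}$ near the diagonal controlled by $|\Delta\bm{X}|\geq|\bm{X}|_*|s-\eta|$ in the denominator and by $|\bm{X}(s)-\bm{X}(\eta)-\partial_s\bm{X}(s)(s-\eta)|\leq \|\partial_s\bm{X}\|_{C^\alpha}|s-\eta|^{1+\alpha}$ in the numerator, instead of the second-derivative bounds available in part (1). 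This weaker expansion is what drives the extra factor of $|\bm{X}|_*^{-1}$ appearing in \eqref{Rest2} compared to \eqref{Rest1}.
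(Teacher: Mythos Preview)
Your proposal is correct and follows essentially the same approach as the paper: the paper also reduces to the base case $k=0$ (citing \cite{KuoLaiMoriRodenberg2023} for \eqref{Rest1} and \cite{MoriRodenbergSpirn19} for \eqref{Rest2}), then writes $\partial_s\mathcal{R}\bm{u}$ as $\int K\,\partial_\eta\bm{u}+\int(\partial_s+\partial_\eta)K\,\bm{u}$ via integration by parts, computes $(\partial_s+\partial_\eta)K$ explicitly to confirm it has the same singularity type with one extra derivative on $\bm{X}$, and concludes by induction. The only minor discrepancy is the attribution of the $k=0$ case of part (2), which the paper takes from \cite{MoriRodenbergSpirn19} rather than \cite{KuoLaiMoriRodenberg2023}.
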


\begin{proof}
   We adapt the proof from \cite[Section 4.2] {Rodenberg:peskin-thesis} and \cite[Proposition 2.6]{KuoLaiMoriRodenberg2023}. 
   The case $k=0$ of the estimate \eqref{Rest1} is contained in \cite{KuoLaiMoriRodenberg2023}, while for $k=1$ we have
    \begin{align*}
    \begin{split}
        \p_s \mathcal{R}\bm{u}(s)
        =&\frac{1}{4\pi}\int_{\mathbb{S}^1}\p_s K(s,\eta)\bm{u}(\eta)d\eta\\
        =&-\frac{1}{4\pi}\int_{\mathbb{S}^1}\p_\eta K(s,\eta)\bm{u}(\eta)d\eta+\frac{1}{4\pi}\int_{\mathbb{S}^1}\paren{\p_s+\p_\eta} K(s,\eta)\bm{u}(\eta)d\eta\\
        =&\frac{1}{4\pi}\int_{\mathbb{S}^1} K(s,\eta)\p_\eta\bm{u}(\eta)d\eta+\frac{1}{4\pi}\int_{\mathbb{S}^1}\paren{\p_s+\p_\eta} K(s,\eta)\bm{u}(\eta)d\eta\\
        :=&I_1+I_2.
    \end{split}   
    \end{align*}
    It follows that 
    \begin{align*}
        \|I_1\|_{C^{0,\beta}}\leq C \frac{\|\partial_s\bm{X}\|_{C^1}^2}{|\bm{X}|_*^2}\|\p_s\bm{u}\|_{C^{0}}.
    \end{align*}
    Next, since $\paren{\p_s+\p_\eta} \Delta f= \Delta \paren{\p_s f}$ and $\paren{\p_s+\p_\eta} \cot{\paren{\frac{s-\eta}2}}=0$, we have
    \begin{align*}
    \begin{split}
        \paren{\p_s+\p_\eta} K(s,\eta)
        =&-\p_s \frac{\Delta\bm{X}\cdot \Delta\partial_s \bm{X}}{|\Delta\bm{X}|^2}I-2\partial_s \frac{\paren{\Delta\bm{X}\cdot \Delta\partial_s \bm{X}}\Delta\bm{X}\otimes\Delta\bm{X}}{|\Delta\bm{X}|^4}\\
        &+\partial_s \frac{\Delta\p_s\bm{X}\otimes\Delta\bm{X}}{|\Delta\bm{X}|^2}+\partial_s \frac{\Delta\bm{X}\otimes\p_s\Delta\bm{X}}{|\Delta\bm{X}|^2}.
    \end{split}
    \end{align*}
    Therefore, we obtain
    \begin{align*}
    \begin{split}
        \|I_2\|_{C^{0,\beta}}\leq C \frac{\|\partial_s\bm{X}\|_{C^1}\|\partial_s\bm{X}\|_{C^2}}{|\bm{X}|_*^2}\|\bm{u}\|_{C^{0}},
    \end{split}
    \end{align*}
    thus concluding the desired result. 
The estimate \eqref{Rest2} with $k=0$ is proved in \cite{MoriRodenbergSpirn19}, and combined with the estimates above show the result for $k=1$. 
    The cases $k\geq2$ follow by an induction procedure.

\end{proof}
\begin{lemma}[Semigroup Estimates]\label{SemigroupLem}
    Let $f\in C^{\alpha}(\mathbb{S}^1)$, $\alpha\in(0,1)$, and let $\beta\geq0$ be such that $0\leq\beta-\alpha\leq 3$. Then, there exists a constant $C=C(\alpha,\beta)$ such that
    \begin{equation*}
        \begin{aligned}
            \|e^{-t|\nabla|^3}f\|_{C^\beta}&\leq C t^{\frac{\alpha-\beta}{3}}\|f\|_{C^\alpha}.
        \end{aligned}
    \end{equation*}
    Moreover, if $f\in h^{1,\gamma}(\mathbb{S}^1)$, then $e^{-t|\nabla|^3}f \in X\paren{T}$, where $X(t)$ is defined in \eqref{space_fixed}, and
    \begin{equation*}
        \lim_{t\rightarrow 0^+} \norm{e^{-t|\nabla|^3}f-f}_{C^{1,\gamma}}=0.
    \end{equation*}
\end{lemma}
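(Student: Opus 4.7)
The plan is to realize $e^{-t|\nabla|^3}$ as convolution on $\mathbb{S}^1$ with the periodic kernel $K_t(s)=\sum_{n\in\mathbb{Z}}e^{-t|n|^3}e^{ins}$. By Poisson summation, $K_t$ is the $2\pi$-periodization of $\phi_t(s)=t^{-1/3}\phi(s/t^{1/3})$, where $\phi$ is the inverse Fourier transform on $\mathbb{R}$ of $e^{-|\xi|^3}$. Since all distributional derivatives of $e^{-|\xi|^3}$ lie in $L^1(\mathbb{R})$, $\phi$ and each of its derivatives decay faster than any polynomial, which together with the rescaling yields
\begin{equation*}
    \|\partial_s^k K_t\|_{L^1(\mathbb{S}^1)}\leq C_k\,t^{-k/3},\qquad t\in(0,T],\ k\geq0,
\end{equation*}
with the piece for $t$ bounded away from $0$ supplied by the superexponential Fourier decay of $e^{-t|n|^3}$.

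From the identity $\partial_s^k e^{-t|\nabla|^3}f=(\partial_s^k K_t)\ast f$, together with the observation that convolution with an $L^1$ function preserves the Hölder seminorm (translations are isometries on $C^\alpha$), Young's inequality then produces
\begin{equation*}
    \|e^{-t|\nabla|^3}f\|_{C^{k,\alpha}}\leq C\,t^{-k/3}\|f\|_{C^\alpha},\qquad k\in\{0,1,2,3\},\ \alpha\in(0,1).
\end{equation*}
The claimed fractional bound $\|e^{-t|\nabla|^3}f\|_{C^\beta}\leq C\,t^{(\alpha-\beta)/3}\|f\|_{C^\alpha}$ for $0\leq\beta-\alpha\leq3$ is then obtained by interpolating between consecutive integer-plus-$\alpha$ orders via Lemma \ref{InterpolationLem}.

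For the $X(T)$ assertion with $f\in h^{1,\gamma}$, the choice $\alpha=1+\gamma$, $\beta=4+\gamma$ in the main estimate delivers the weighted piece $t\|e^{-t|\nabla|^3}f\|_{C^{4,\gamma}}\leq C\|f\|_{C^{1,\gamma}}$. Continuity into $C^{4,\gamma}$ on $(0,T]$ is automatic, since $e^{-t|\nabla|^3}f$ is smooth and real-analytic in $t$ for $t>0$ by superexponential decay of the Fourier multiplier. The subtle point is continuity in $C^{1,\gamma}$ up to $t=0$, which is exactly why the little-Hölder subspace $h^{1,\gamma}$ (in which smooth functions are dense) is needed: for a trigonometric polynomial $P$, $e^{-t|\nabla|^3}P\to P$ in every Hölder norm by direct Fourier computation, and for general $f\in h^{1,\gamma}$ one chooses smooth $f_\varepsilon$ with $\|f-f_\varepsilon\|_{C^{1,\gamma}}<\varepsilon$ and splits
\begin{equation*}
    \|e^{-t|\nabla|^3}f-f\|_{C^{1,\gamma}}\leq \|e^{-t|\nabla|^3}(f-f_\varepsilon)\|_{C^{1,\gamma}}+\|e^{-t|\nabla|^3}f_\varepsilon-f_\varepsilon\|_{C^{1,\gamma}}+\|f_\varepsilon-f\|_{C^{1,\gamma}},
\end{equation*}
controlling the first term by $C\varepsilon$ via the $\beta=\alpha$ case and sending $t\to 0^+$ on the second. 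The principal technical care is in justifying the $L^1$ kernel bound above: because $|\xi|^3$ is only $C^2$ at the origin, the rapid decay of $\phi$ must be derived from integrability of all distributional derivatives of $e^{-|\xi|^3}$ rather than from membership in Schwartz class, but this suffices to dominate the periodization uniformly in $t\in(0,T]$.
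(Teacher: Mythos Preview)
Your proposal is correct. For the continuity-at-$t=0$ claim in $h^{1,\gamma}$ and the membership in $X(T)$, you follow exactly the argument the paper records in the Remark immediately after the Lemma: approximate $f$ by smooth $f_\varepsilon$, use uniform boundedness of the semigroup on $C^{1,\gamma}$ for the first and third terms of the splitting, and elementary convergence for smooth data on the middle term; the weighted $C^{4,\gamma}$ bound and continuity on $(0,T]$ come from the smoothing estimate and instantaneous smoothness. The paper gives no proof of the smoothing estimate itself, treating it as standard, so your kernel-convolution argument is a legitimate way to supply it.

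One imprecision is worth correcting: it is not true that all distributional derivatives of $e^{-|\xi|^3}$ lie in $L^1(\mathbb{R})$, nor that $\phi$ decays faster than every polynomial. Since $|\xi|^3$ is only $C^2$ at the origin, the fourth distributional derivative of $e^{-|\xi|^3}$ already contains a point mass, and higher ones involve $\delta',\delta'',\ldots$, which are not measures. What you actually need, and what is true, is that for each $k\in\{0,1,2,3\}$ the symbol $(i\xi)^k e^{-|\xi|^3}$ is $C^{k+2}$ with its $(k+3)$-rd derivative a bounded $L^1$ function (the leading non-smooth term is $\xi^k|\xi|^3$, which is exactly $C^{k+2}$); this yields $|\phi^{(k)}(s)|\leq C|s|^{-(k+3)}$, more than enough for $\phi^{(k)}\in L^1(\mathbb{R})$ and hence for the periodization bound $\|\partial_s^k K_t\|_{L^1(\mathbb{S}^1)}\leq C_k t^{-k/3}$. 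With that adjustment the rest of your argument goes through unchanged.
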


\begin{remark}
    Given $f\in h^{1,\gamma}(\mathbb{S}^1)$, then for all $\varepsilon>0$, there exists a function $f_\varepsilon\in C^\infty(\mathbb{S}^1)$ s.t. $\norm{f-f_\varepsilon}_{C^{1,\gamma}}\leq \varepsilon$.
    We can claim that 
    \begin{equation*}
        \lim_{t\rightarrow 0^+} \norm{e^{-t|\nabla|^3}f_\varepsilon-f_\varepsilon}_{C^{2}}=0.
    \end{equation*}
    Since $e^{-t|\nabla|^3}$ is bounded from $C^{1,\gamma}(\mathbb{S}^1)$ to itself, 
    we obtain the right continuity of  $e^{-t|\nabla|^3}$ at $t=0$ in $h^{1,\gamma}(\mathbb{S}^1)$.
    Next,  for all $\tilde{t}>0$,  $e^{-\tilde{t}|\nabla|^3}f\in C^\infty(\mathbb{S}^1)$, so $e^{-t|\nabla|^3}f \in C\paren{(\tilde{t},T); C^{4,\gamma}(\mathbb{S}^1) }$.
Also,  for all $t>0$, 
\begin{equation*}
    \begin{aligned}
            t\|e^{-t|\nabla|^3}f\|_{C^{4,\gamma}}&\leq C \|f\|_{C^{1,\gamma}}.
        \end{aligned}
\end{equation*}
  Assembling these results, we obtain  $e^{-t|\nabla|^3}f \in X\paren{T}$
\end{remark}

\section{Tension Estimates}\label{sec3}

Consider the equation \eqref{tension_eqn} written as follows:
\begin{equation}\label{ten_eq}
    \begin{aligned}
    \frac14|\nabla|\sigma+\mathcal{M}(\varphi)\sigma=F,
    \end{aligned}
\end{equation}
where
\begin{equation}\label{Meq}
    \begin{aligned}
        \mathcal{M}(\varphi)\sigma&=-\frac14\bm{\tau}(s)\cdot[\mathcal{H},\bm{n}](\sigma(1+\partial_s\varphi))+\frac14\bm{\tau}(s)\cdot[\mathcal{H},\bm{\tau}]\partial_s\sigma\\
        &\quad-\bm{\tau(s)}\cdot\mathcal{R}\big(-\sigma(1+\partial_s\varphi)\bm{n}(s)+(\partial_s\sigma)\bm{\tau}(s)\big).
    \end{aligned}
\end{equation}
In our problem, $F$ will be given by
\begin{equation}\label{Fdef}
    \begin{aligned}
        F(\varphi)=-\frac{1}{4}\bm{\tau}(s)\cdot[\mc{H},\bm{n}]\Big(\p_s^3\varphi +\frac{1}{2}(1+\p_s\varphi)^3\Big)\!+\!\bm{\tau}(s)\!\cdot\!\mc{R}\Big(\big(\p_s^3\varphi +\frac{1}{2}(1+\p_s\varphi)^3\big) \bm{n}(s)\Big).
    \end{aligned}
\end{equation}

\begin{prop}\label{Mprop}
    Let $\norm{\varphi}_{ C^{1}(\mathbb{S}^1)}\leq M_1$, $|\varphi|_*>m_1$. Then, $\mathcal{M}(\varphi)$ maps $C^1(\mathbb{S}^1)\rightarrow C^{0,\alpha}(\mathbb{S}^1)$ and
    \begin{equation*}
        \|\mathcal{M}(\varphi)\sigma\|_{C^{0,\alpha}}\leq C(M_1, m_1) \|\sigma\|_{C^1}. 
    \end{equation*}
    Moreover, for $0\leq k\in\mathbb{Z}$, if $\varphi\in C^{1+k}(\mathbb{S}^1)$, then
     \begin{equation*}
        \|(\mathcal{M}(\varphi)\sigma)\|_{C^{k,\alpha}}\leq  C(M_1, m_1)  \big((1+\|\partial_s\varphi\|_{C^{k}})\|\sigma\|_{C^1}+\|\sigma\|_{C^{1+k}}\big).
    \end{equation*}
    
\end{prop}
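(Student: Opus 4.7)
The strategy is to estimate the three contributions to $\mathcal{M}(\varphi)\sigma$ in \eqref{Meq} separately, applying Lemma~\ref{CommutatorLem} to the two Hilbert-transform commutators and Proposition~\ref{RestimateProp} to the remainder. Since $\bm{\tau}(s)=(\cos(s+\varphi(s)),\sin(s+\varphi(s)))^{T}$ and $\bm{n}=\mathcal{R}^{-1}\bm{\tau}$, differentiating via Faà di Bruno together with $\|\varphi\|_{C^1}\leq M_1$ gives, after absorbing nonlinear cross terms in $\|\partial_s\varphi\|_{C^{0}},\dots,\|\partial_s\varphi\|_{C^{j-1}}$ by interpolation (Lemma~\ref{InterpolationLem}), the geometric bound
\[
\|\bm{\tau}\|_{C^{j+1}}+\|\bm{n}\|_{C^{j+1}}\leq C(M_1)\bigl(1+\|\partial_s\varphi\|_{C^j}\bigr),\qquad j\geq 0.
\]
Since $\partial_s\bm{X}=\bm{\tau}$, the same bound holds for $\|\bm{X}\|_{C^{j+2}}$ once the translation is fixed. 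Moreover $|\varphi|_*>m_1$ furnishes a lower bound $|\bm{X}|_*\geq c(m_1,M_1)>0$, so the hypotheses of Lemma~\ref{CommutatorLem} and Proposition~\ref{RestimateProp} are met at every level we need.

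For $k=0$, Lemma~\ref{CommutatorLem}(1) directly gives
\[
\|[\mathcal{H},\bm{n}](\sigma(1+\partial_s\varphi))\|_{C^{0,\alpha}}+\|[\mathcal{H},\bm{\tau}]\partial_s\sigma\|_{C^{0,\alpha}}\leq C(M_1)\|\sigma\|_{C^1},
\]
while Proposition~\ref{RestimateProp}(1) with $k=0$, applied to $-\sigma(1+\partial_s\varphi)\bm{n}+(\partial_s\sigma)\bm{\tau}$ (whose $C^0$ norm is at most $C(M_1)\|\sigma\|_{C^1}$), yields the analogous remainder bound with constant $C(M_1,m_1)$. Dotting with $\bm{\tau}(s)$ preserves $C^{0,\alpha}$ regularity by the Hölder product rule, and the first estimate follows.

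For general $k\geq 1$, Lemma~\ref{CommutatorLem}(1) at level $k$ combined with the geometric bound for $\|\bm{n}\|_{C^{k+1}}$ and Leibniz plus interpolation on $\|\sigma(1+\partial_s\varphi)\|_{C^k}$ produces
\[
\|[\mathcal{H},\bm{n}](\sigma(1+\partial_s\varphi))\|_{C^{k,\alpha}}\leq C(M_1)\bigl[(1+\|\partial_s\varphi\|_{C^k})\|\sigma\|_{C^1}+\|\sigma\|_{C^k}\bigr],
\]
and an identical argument for $[\mathcal{H},\bm{\tau}]\partial_s\sigma$ gives the same form with $\|\sigma\|_{C^{k+1}}$ in place of $\|\sigma\|_{C^k}$. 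Proposition~\ref{RestimateProp}(1) at level $k$, together with the bound $\|\bm{X}\|_{C^{k+2}}\leq C(M_1)(1+\|\partial_s\varphi\|_{C^k})$ and a Leibniz/interpolation estimate on the vector $u=-\sigma(1+\partial_s\varphi)\bm{n}+(\partial_s\sigma)\bm{\tau}$ yielding $\|u\|_{C^k}\leq C(M_1)\bigl[(1+\|\partial_s\varphi\|_{C^k})\|\sigma\|_{C^1}+\|\sigma\|_{C^{k+1}}\bigr]$, controls the remainder by the same two terms. Summing and multiplying by $\bm{\tau}(s)$ in $C^{k,\alpha}$ concludes the proof. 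The main obstacle is purely one of bookkeeping: at each application of Leibniz one must check that every intermediate cross term of the form $\|\partial_s^{j}\sigma\|_{C^{0}}\|\partial_s^{k-j}\bm{n}\|_{C^{0}}$, $1\leq j\leq k-1$, is absorbed via Lemma~\ref{InterpolationLem} into the two extremal contributions $(1+\|\partial_s\varphi\|_{C^k})\|\sigma\|_{C^1}$ and $\|\sigma\|_{C^{k+1}}$, with any remaining polynomial dependence on $\|\partial_s\varphi\|_{C^{k-1}}$ and $M_1$ folded into $C(M_1,m_1)$. Induction on $k$ keeps this transparent, since each step adds at most one new derivative landing on either $\sigma$, $\varphi$, $\bm{n}$, or $\bm{\tau}$.
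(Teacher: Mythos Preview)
Your proposal is correct and follows essentially the same route as the paper: apply Lemma~\ref{CommutatorLem}(1) to the two Hilbert commutators, Proposition~\ref{RestimateProp}(1) to the $\mathcal{R}$ term, and then for higher $k$ use Leibniz together with interpolation (Lemma~\ref{InterpolationLem}) to reduce all cross terms to the two extremal contributions. The only cosmetic difference is that the paper writes out the case $k=1$ explicitly by differentiating once and listing the resulting pieces before appealing to induction, whereas you invoke the level-$k$ forms of the lemmas directly; note also that the intermediate $\|\partial_s\varphi\|_{C^{k-1}}$ dependence you mention should be absorbed into the factor $(1+\|\partial_s\varphi\|_{C^k})$ via interpolation rather than into $C(M_1,m_1)$, but your Young-inequality argument already does this.
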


\begin{proof}
The estimates \eqref{comm1}, \eqref{Rest1} in Lemma \ref{CommutatorLem} and Proposition \ref{RestimateProp} give that
    \begin{equation*}
    \begin{aligned}
        \|\mathcal{M}(\varphi)\sigma\|_{C^{0,\alpha}}
        &\leq C (1+\|\p_s \varphi\|_{C^{0}})\\
        &\qquad\times\Big(1+\|\p_s \varphi\|_{C^{0}}+\frac{(1+\|\p_s \varphi\|_{C^{0}})^2}{|\varphi|_*^2}\Big)\big((1+\|\p_s \varphi\|_{C^{0}})\|\sigma\|_{C^0}+\|\p_s \sigma\|_{C^{0}}\big).
    \end{aligned}
    \end{equation*}
    Next, taking a derivative in \eqref{Meq}, we similarly obtain that
    \begin{align*}
        \begin{split}
                \|\p_s(\mathcal{M}(\varphi)\sigma)\|_{C^{0,\alpha}}
            &\leq C \norm{\p_s\bm{\tau}}_{C^{0,\alpha}}\paren{I_{01}+I_{02}+I_{03}}\\
            &\quad+C \norm{\bm{\tau}}_{C^{0,\alpha}}\paren{I_{11}+I_{12}+I_{13}},
        \end{split}
    \end{align*}
    where
    \begin{align*}
    \begin{split}
        I_{01}:=&\norm{\p_s\bm{n}}_{C^{0}}\norm{\sigma(1+\partial_s\varphi)}_{C^{0}}\\
        I_{02}:=&\norm{\p_s\bm{\tau}}_{C^{0}}\norm{\p_s\sigma}_{C^{0}}\\
        I_{03}:=&\frac{\|\partial_s\bm{X}\|_{C^1}^2}{|\varphi|_*^2}\norm{-\sigma(1+\partial_s\varphi)\bm{n}+(\partial_s\sigma)\bm{\tau}}_{C^{0}}\\
        I_{11}:=&\norm{\p_s^2\bm{n}}_{C^{0}}\norm{\sigma(1+\partial_s\varphi)}_{C^{0}}+\norm{\p_s\bm{n}}_{C^{0}}\norm{\p_s\paren{\sigma(1+\partial_s\varphi)}}_{C^{0}}\\
        I_{12}:=&\norm{\p_s^2\bm{\tau}}_{C^{0}}\norm{\p_s\sigma}_{C^{0}}+\norm{\p_s\bm{\tau}}_{C^{0}}\norm{\p_s^2\sigma}_{C^{0}}\\
        I_{13}:=&\frac{\|\partial_s\bm{X}\|_{C^1}}{|\varphi|_*^2}\left(\|\partial_s\bm{X}\|_{C^2}\|-\sigma(1+\partial_s\varphi)\bm{n}+(\partial_s\sigma)\bm{\tau}\|_{C^{0}}\right.\\
        &\qquad\qquad\left.+\|\partial_s\bm{X}\|_{C^1}\|\p_s\paren{-\sigma(1+\partial_s\varphi)\bm{n}+(\partial_s\sigma)\bm{\tau}}\|_{C^{0}}\right).
    \end{split}
    \end{align*}
The case $k\geq 2$ follows analogously by repeated differentiation and application of Lemma \ref{InterpolationLem} and  Proposition \ref{RestimateProp}.

\end{proof}

Let us write equation \eqref{ten_eq} as a perturbation of the identity:
\begin{equation}\label{eq_pert}
    \begin{aligned}
    \big(I+K(\varphi)\big)\sigma=\big(I+\frac14|\nabla|\big)^{-1}F,
    \end{aligned}
\end{equation}
where
\begin{equation}\label{K_def}
    \begin{aligned}
    K(\varphi)\sigma=\big(I+\frac14|\nabla|\big)^{-1}\big(\mathcal{M}(\varphi)-I\big)\sigma.
    \end{aligned}
\end{equation}
Then, Proposition \ref{Mprop} and the fact that $\big(I+\frac14|\nabla|\big)^{-1}$ is a bounded operator from $C^{0,\alpha}(\mathbb{S}^1)$ to $C^{1,\alpha}(\mathbb{S}^1)$ imply that $K(\varphi)$ is in fact a compact perturbation.
\begin{prop}[Tension Estimate]\label{TensionProp}
Let $\alpha\in(0,1)$, $\norm{\varphi}_{ C^{1}(\mathbb{S}^1)}\leq M_1$ not describing a circle, $|\varphi|_*>m_1$, and $F\in C^\alpha(\mathbb{S}^1)$. Then, there exists a unique solution to \eqref{ten_eq} and a constant $C_\varphi<\infty$ such that
\begin{equation*}
\begin{aligned}
    \|\sigma\|_{C^{1,\alpha}}&\leq C_\varphi\|F\|_{C^{0,\alpha}}. 
\end{aligned}
\end{equation*}
    Moreover, for $0\leq k\in\mathbb{Z}$,  if $\varphi\in C^{1+k}(\mathbb{S}^1)$ and $F\in C^{k,\alpha}(\mathbb{S}^1)$, then
    \begin{equation*}
        \begin{aligned}
              \|\sigma\|_{C^{1+k,\alpha}}&\leq C_\varphi C\big(\|F\|_{C^{k,\alpha}}+(1+\|\varphi\|_{C^{k+1}})\|\sigma\|_{C^1}\big),
        \end{aligned}
    \end{equation*}
    where $C=C(M_1, m_1)$.
\end{prop}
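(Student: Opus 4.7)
The plan is to recast \eqref{ten_eq} in its equivalent compact-perturbation-of-identity form \eqref{eq_pert}, apply the Fredholm alternative using the null-space computation from Section \ref{probform}, and then bootstrap regularity by rewriting the equation and exploiting the one-derivative smoothing of $(I+\tfrac{1}{4}|\nabla|)^{-1}$ together with Proposition \ref{Mprop}.

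For the base case, I would first check that $K(\varphi)$ defined in \eqref{K_def} is compact on $C^{1,\alpha}(\mathbb{S}^1)$. Proposition \ref{Mprop} with $k=0$ gives boundedness $\mathcal{M}(\varphi)\colon C^{1}\to C^{0,\alpha}$; composing with the compact embedding $C^{1,\alpha}\hookrightarrow C^{1}$ and the Schauder gain $(I+\tfrac{1}{4}|\nabla|)^{-1}\colon C^{0,\alpha}\to C^{1,\alpha}$ yields compactness of $(I+\tfrac{1}{4}|\nabla|)^{-1}\mathcal{M}(\varphi)$ on $C^{1,\alpha}$, while the $-I$ summand contributes a compact operator via $C^{1,\alpha}\to C^{2,\alpha}\hookrightarrow C^{1,\alpha}$. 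The Fredholm alternative then reduces unique solvability of \eqref{eq_pert} to triviality of the kernel of $I+K(\varphi)$, which coincides with the kernel of the homogeneous equation $\tfrac{1}{4}|\nabla|\sigma+\mathcal{M}(\varphi)\sigma=0$. The energy computation recalled in Section \ref{probform} forces any element of this kernel to satisfy $\partial_s\sigma=0$ and $\sigma\kappa=\mathrm{const}$, so $\sigma\equiv 0$ whenever $\kappa$ is non-constant, which is exactly the hypothesis that $\varphi$ does not describe a circle. The Open Mapping Theorem then delivers
\begin{equation*}
\|\sigma\|_{C^{1,\alpha}}\leq \|(I+K(\varphi))^{-1}\|_{C^{1,\alpha}\to C^{1,\alpha}}\,\|(I+\tfrac{1}{4}|\nabla|)^{-1}F\|_{C^{1,\alpha}}\leq C_\varphi\|F\|_{C^{0,\alpha}}.
\end{equation*}

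For the higher regularity estimate I would rewrite the equation as $\sigma=(I+\tfrac{1}{4}|\nabla|)^{-1}\bigl(F+\sigma-\mathcal{M}(\varphi)\sigma\bigr)$ and use the one-derivative Hölder gain of the inverse operator. Together with Proposition \ref{Mprop} applied at level $k\geq 1$, this produces
\begin{equation*}
\|\sigma\|_{C^{1+k,\alpha}}\leq C\bigl(\|F\|_{C^{k,\alpha}}+\|\sigma\|_{C^{k,\alpha}}+(1+\|\partial_s\varphi\|_{C^k})\|\sigma\|_{C^1}+\|\sigma\|_{C^{1+k}}\bigr).
\end{equation*}
The two intermediate norms $\|\sigma\|_{C^{k,\alpha}}$ and $\|\sigma\|_{C^{1+k}}$ lie strictly between $C^{1}$ and $C^{1+k,\alpha}$, so part~(2) of Lemma \ref{InterpolationLem} together with Young's inequality allows each to be controlled by $\varepsilon\|\sigma\|_{C^{1+k,\alpha}}+C_\varepsilon\|\sigma\|_{C^{1}}$; absorbing the $\varepsilon$ pieces into the left-hand side, and using that $\|\sigma\|_{C^{1}}$ is already controlled by the base-case bound, closes the inductive step and yields the stated estimate.

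The main technical obstacle is the chicken-and-egg character of this bootstrap: applying Proposition \ref{Mprop} at level $k$ presupposes $\sigma\in C^{1+k}$, which is precisely what the bound is meant to produce. I would address this by first proving the displayed inequality as an \emph{a priori} bound under the auxiliary assumption that $\sigma$ and $\varphi$ are smooth, then approximating $\varphi$ by a mollification $\varphi_\delta$ for which $|\varphi_\delta|_*$ and $d(\varphi_\delta)$ stay uniformly controlled, solving the smooth tension problem for the associated $\sigma_\delta$ (automatically smooth since $|\nabla|$ plus a compact correction is hypoelliptic on the torus), invoking the a priori estimate uniformly in $\delta$, and passing to the limit via the $C^{1,\alpha}$ continuity supplied by the base case. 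A Nirenberg-type argument with difference quotients in $s$ is an equivalent alternative.
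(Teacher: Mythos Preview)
Your proposal is correct. The base case is handled essentially as in the paper (Fredholm alternative plus the kernel computation, which the paper outsources to \cite{KuoLaiMoriRodenberg2023}). The higher-regularity step, however, is organized differently.

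The paper differentiates the equation \eqref{ten_eq} in $s$, observes that $\partial_s\sigma$ satisfies the \emph{same} tension equation with right-hand side $\partial_s F-(\partial_s\mathcal{M}(\varphi))\sigma$, and then reapplies the base-case bound to $\partial_s\sigma$; iterating this produces the $C_\varphi$ factor naturally, since each layer invokes $(I+K(\varphi))^{-1}$. For $k\geq 2$ the paper then interpolates $\|\sigma\|_{C^k}$ between $\|\sigma\|_{C^1}$ and $\|\sigma\|_{C^{1+k}}$ and absorbs, with $\varepsilon$ chosen depending on $C_\varphi$. Your route instead reads off the Schauder gain of $(I+\tfrac14|\nabla|)^{-1}:C^{k,\alpha}\to C^{1+k,\alpha}$ directly and absorbs $\|\sigma\|_{C^{1+k}}$ via interpolation; this is slightly cleaner and, taken at face value, yields the higher-order bound with a constant depending only on $(M_1,m_1)$, i.e.\ formally stronger than what is stated. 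You also confront the qualitative regularity issue (why $\sigma\in C^{1+k}$ to begin with) via mollification, which the paper leaves implicit when it differentiates the equation. Either approach is fine here; the paper's has the minor advantage that the differentiated equation is exactly of the original form, so no new mapping properties of $(I+\tfrac14|\nabla|)^{-1}$ at higher H\"older scales need to be invoked.
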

\begin{proof}
The first part is proved in \cite[Theorem 1.2]{KuoLaiMoriRodenberg2023}, where it is shown that  $I+K(\varphi)$ has trivial kernel when $\bm{X}$ is not a circle.
The second estimate is obtained as a consequence of the first one. In fact, taking a derivative in \eqref{ten_eq} we obtain
    \begin{equation*}
        \begin{aligned}
    \frac14|\nabla|(\partial_s\sigma)+\mathcal{M}(\varphi)(\partial_s\sigma)=\partial_s F-(\partial_s\mathcal{M}(\varphi))\sigma,
        \end{aligned}
    \end{equation*}
    where we are denoting
    \begin{equation*}
    \begin{aligned}
        (\partial_s\mathcal{M}(\varphi))\sigma&\!=\!-\frac14\partial_s\bm{\tau}(s)\!\cdot\![\mathcal{H},\bm{n}](\sigma(1+\partial_s\varphi))\!-\!\frac14\bm{\tau}(s)\!\cdot\![\mathcal{H},\partial_s\bm{n}](\sigma(1+\partial_s\varphi))\\
        &\quad-\frac14\bm{\tau}(s)\!\cdot\![\mathcal{H},\bm{n}](\sigma\partial_s^2\varphi)+\frac14\partial_s\bm{\tau}(s)\cdot[\mathcal{H},\bm{\tau}]\partial_s\sigma+\frac14\bm{\tau}(s)\cdot[\mathcal{H},\partial_s\bm{\tau}]\partial_s\sigma
        \\
        &\quad-\partial_s\bm{\tau(s)}\cdot\mathcal{R}\big(-\sigma(1+\partial_s\varphi)\bm{n}(s)+(\partial_s\sigma)\bm{\tau}(s)\big)\\
        &\quad-\bm{\tau(s)}\cdot(\partial_s\mathcal{R})\big(-\sigma(1+\partial_s\varphi)\bm{n}(s)+(\partial_s\sigma)\bm{\tau}(s)\big)\\
        &\quad-\bm{\tau(s)}\cdot\mathcal{R}\big(-\sigma\partial_s^2\varphi\bm{n}(s)-\sigma(1+\partial_s\varphi)\partial_s\bm{n}(s)+(\partial_s\sigma)\partial_s\bm{\tau}(s)\big).
    \end{aligned}
\end{equation*}
Therefore, 
\begin{equation*}
    \begin{aligned}
        \|\partial_s\sigma\|_{C^{1,\alpha}}&\leq C_\varphi \big(\|\partial_s F\|_{C^{0,\alpha}}+\|(\partial_s \mathcal{M}(\varphi))\sigma\|_{C^{0,\alpha}}\big),
    \end{aligned}
\end{equation*}
and proceeding as in Proposition \ref{Mprop},
\begin{equation*}
    \begin{aligned}
              \|\partial_s\sigma\|_{C^{1,\alpha}}&\leq C_\varphi \|\partial_s F\|_{C^{0,\alpha}}+C_{\varphi}C(\|\varphi\|_{C^1},|\varphi|_*)(1+\|\partial_s\varphi\|_{C^1})\|\sigma\|_{C^1}.
    \end{aligned}
\end{equation*}    
The case $k\geq2$ follows similarly by noticing that
\begin{equation*}
    \begin{aligned}
                \|\big(\partial_s^k \mathcal{M}(\varphi)\big)\sigma\|_{C^{0,\alpha}}&=\norm{\partial_s^k \big(\mathcal{M}(\varphi)\sigma\big)-\sum_{i=0}^{k-1}\begin{pmatrix}
                    k\\
                    i
                \end{pmatrix} \PPDD{i}{s}{}\mathcal{M}(\varphi)\partial_s^{k-i}\sigma}_{C^{0,\alpha}}\\
                &\leq C(\|\varphi\|_{C^1},|\varphi|_*)\big((1+\|\partial_s\varphi\|_{C^k})\|\sigma\|_{C^1}+(1+\|\partial_s\varphi\|_{C^1})\|\sigma\|_{C^k}\big),
    \end{aligned}
\end{equation*}
and performing interpolation on the last term
\begin{equation*}
    \begin{aligned}
        (1+\|\varphi\|_{C^2})\|\sigma\|_{C^k}&\leq (1+\|\varphi\|_{C^1}^{1-\frac{1}{k}}\|\varphi\|_{C^{1+k}}^{\frac{1}{k}})\|\sigma\|_{C^1}^{\frac{1}{k}}\|\sigma\|_{C^{1+k}}^{1-\frac{1}{k}}\\
        &\leq C(\varepsilon)(1+\|\varphi\|_{C^{1+k}})\|\sigma\|_{C^1}+\varepsilon (1+\|\varphi\|_{C^1})\|\sigma\|_{C^{1+k}},
    \end{aligned}
\end{equation*}
so that we can choose $\varepsilon=\varepsilon(C_\varphi, M_1,m_1)$ small enough to absorb the last term with the left-hand side.

\end{proof}
We will also need the continuity of $\sigma$ with respect to $\varphi$. Proceeding as in Propositions \ref{Mprop} and \ref{TensionProp}, we obtain the following result.
\begin{prop}\label{M_cont_prop}
    Let $\norm{\varphi_1}_{ C^{1}(\mathbb{S}^1)},\norm{\varphi_2}_{ C^{1}(\mathbb{S}^1)}\leq M_1$, $|\varphi_1|_*, |\varphi_2|_*\geq m_1>0$. Then,
    \begin{equation*}
        \|\big(\mathcal{M}(\varphi_2)-\mathcal{M}(\varphi_1)\big)\sigma\|_{C^{0,\alpha}}\leq C(M_1,m_1)\|\varphi_2-\varphi_1\|_{C^{1}}\|\sigma\|_{C^1}.
    \end{equation*}
Additionally, if $\varphi_1, \varphi_2\in C^1(\mathbb{S}^2)$, then
   \begin{equation*}
   \begin{aligned}
        \|\big(\p_s\mathcal{M}(\varphi_1)-\p_s\mathcal{M}&(\varphi_2)\big)\sigma\|_{C^{0,\alpha}}\\
        &\leq C\Big(\|\varphi_2-\varphi_1\|_{C^{2}}+\|\varphi_2-\varphi_1\|_{C^{1}}\big(1+\|\p_s\varphi_1\|_{C^{1}}+\|\p_s\varphi_2\|_{C^{1}}\big)\Big)\|\sigma\|_{C^1},
   \end{aligned}
    \end{equation*}
    for a constant $C=C(M_1,m_1)$.
\end{prop}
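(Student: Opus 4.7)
The plan is to decompose $(\mc{M}(\varphi_2)-\mc{M}(\varphi_1))\sigma$ according to the three pieces in \eqref{Meq} and reduce each difference to an application of the commutator estimate Lemma \ref{CommutatorLem} and the remainder estimate Proposition \ref{RestimateProp}, with all the geometric inputs ($\bm{\tau}$, $\bm{n}$, and the kernel of $\mc{R}$) replaced by their differences. Writing $\bm{\tau}_i(s)=(\cos(s+\varphi_i),\sin(s+\varphi_i))^{\rm T}$ and $\bm{n}_i=\mc{R}^{-1}\bm{\tau}_i$, one immediately has $\|\bm{\tau}_2-\bm{\tau}_1\|_{C^k}\leq C\|\varphi_2-\varphi_1\|_{C^k}$ and the same for $\bm{n}$, for $k=0,1$, with $C=C(M_1)$. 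Moreover, $\Delta\bm{X}_2-\Delta\bm{X}_1=\int_\eta^s(\bm{\tau}_2-\bm{\tau}_1)\,ds'$, while the arc-chord hypothesis $|\varphi_i|_*\geq m_1$ gives $|\Delta\bm{X}_i|\geq m_1|s-\eta|$ uniformly.

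First I would split the commutator pieces using the identity $a_2b_2-a_1b_1=(a_2-a_1)b_2+a_1(b_2-b_1)$ applied repeatedly to the three factors $\bm{\tau}_i$, $[\mc{H},\bm{n}_i]$ or $[\mc{H},\bm{\tau}_i]$, and the arguments $\sigma(1+\p_s\varphi_i)$ or $\p_s\sigma$. For instance,
\begin{align*}
\bm{\tau}_2\cdot[\mc{H},\bm{n}_2]\bigl(\sigma(1+\p_s\varphi_2)\bigr)&-\bm{\tau}_1\cdot[\mc{H},\bm{n}_1]\bigl(\sigma(1+\p_s\varphi_1)\bigr)\\
&=(\bm{\tau}_2-\bm{\tau}_1)\cdot[\mc{H},\bm{n}_2]\bigl(\sigma(1+\p_s\varphi_2)\bigr)\\
&\quad+\bm{\tau}_1\cdot[\mc{H},\bm{n}_2-\bm{n}_1]\bigl(\sigma(1+\p_s\varphi_2)\bigr)\\
&\quad+\bm{\tau}_1\cdot[\mc{H},\bm{n}_1]\bigl(\sigma\p_s(\varphi_2-\varphi_1)\bigr),
\end{align*}
and Lemma \ref{CommutatorLem}(1) with $k=0$ applied in each line produces a factor $\|\varphi_2-\varphi_1\|_{C^1}\|\sigma\|_{C^1}$ (with constants depending on $M_1$), since $\|\bm{n}_2-\bm{n}_1\|_{C^1}\lesssim\|\varphi_2-\varphi_1\|_{C^1}$. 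The analogous decomposition handles the $[\mc{H},\bm{\tau}_i]\p_s\sigma$ term.

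The main technical obstacle is the difference of the nonlocal operators $\mc{R}_2-\mc{R}_1$, whose kernel is a sum of rational functions of $\Delta\bm{X}_i$. I would repeatedly apply the algebraic identity
\begin{equation*}
\frac{1}{|\Delta\bm{X}_2|^2}-\frac{1}{|\Delta\bm{X}_1|^2}=-\frac{(\Delta\bm{X}_2+\Delta\bm{X}_1)\cdot(\Delta\bm{X}_2-\Delta\bm{X}_1)}{|\Delta\bm{X}_1|^2|\Delta\bm{X}_2|^2},
\end{equation*}
which together with the bounds above reveals that each piece of $K_2(s,\eta)-K_1(s,\eta)$ carries an extra factor bounded by $\|\varphi_2-\varphi_1\|_{C^1}$ without worsening the singular behavior at $s=\eta$ (the arc-chord denominators stay comparable to $|s-\eta|$). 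The resulting kernel can then be estimated by the same methods as in the proof of Proposition \ref{RestimateProp}, giving $\|(\mc{R}_2-\mc{R}_1)\bm{u}\|_{C^{0,\alpha}}\leq C(M_1,m_1)\|\varphi_2-\varphi_1\|_{C^1}\|\bm{u}\|_{C^0}$. Combining with the commutator bounds above yields the first claim.

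For the second estimate, I would differentiate the expression for $\mc{M}(\varphi_i)\sigma$ in $s$ exactly as in the proof of Proposition \ref{TensionProp} and then apply the same difference splitting to the resulting seven terms. The new terms contain $\p_s\bm{\tau}_i,\p_s\bm{n}_i$ (whose differences are controlled by $\|\varphi_2-\varphi_1\|_{C^2}$ but which themselves cost $\|\p_s\varphi_i\|_{C^1}$ in the higher norms), $\p_s^2\varphi_i$ inside the arguments, and the derivative of the operator $\p_s\mc{R}_i$ whose difference is handled by the same identity as above applied to the second-derivative kernel $(\p_s+\p_\eta)K_i$. Careful bookkeeping of where $\|\varphi_2-\varphi_1\|_{C^1}$ versus $\|\varphi_2-\varphi_1\|_{C^2}$ appears, and of the companion factors $(1+\|\p_s\varphi_1\|_{C^1}+\|\p_s\varphi_2\|_{C^1})$, produces the stated bound.
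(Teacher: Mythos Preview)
Your proposal is correct and follows precisely the approach the paper indicates: the paper's proof consists solely of the remark ``Proceeding as in Propositions \ref{Mprop} and \ref{TensionProp}, we obtain the following result,'' and your sketch is exactly that---splitting each piece of \eqref{Meq} via $a_2b_2-a_1b_1=(a_2-a_1)b_2+a_1(b_2-b_1)$, invoking Lemma \ref{CommutatorLem} and a difference version of Proposition \ref{RestimateProp}, and then differentiating as in the proof of Proposition \ref{TensionProp} for the second estimate. You have in fact supplied more detail than the paper does.
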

\begin{prop}\label{ten_cont_prop}
Let $\sigma_1$, $\sigma_2$ be the corresponding solutions to \eqref{ten_eq} with $|\varphi_1|_*>0$, $|\varphi_2|_*\geq m_1 >0$,  $\norm{\varphi_1}_{ C^{1}(\mathbb{S}^1)},\norm{\varphi_2}_{ C^{1}(\mathbb{S}^1)}\leq M_1$, $F_1, F_2\in C^\alpha(\mathbb{S}^1))$. Then, 
\begin{equation}\label{cont_ten}
\begin{aligned}
    \|\sigma_1-\sigma_2\|_{C^{1,\alpha}}&\leq C_{\varphi_1}C_{\varphi_2}C \|\varphi_2-\varphi_1\|_{C^{1}}\|F_1\|_{C^{0,\alpha}}+C_{\varphi_2}\|F_1-F_2\|_{C^{0,\alpha}},
\end{aligned}
\end{equation}
with $C=C(M_1, m_1)$.
Moreover, if $\varphi_1,\varphi_2\in C^2(\mathbb{S}^1)$,  $F_1, F_2\in C^{1,\alpha}(\mathbb{S}^1))$, then
\begin{equation*}
\begin{aligned}
    \|\sigma_1-\sigma_2\|_{C^{2,\alpha}}
    &\leq C_{\varphi_1}C_{\varphi_2}C \|\varphi_2-\varphi_1\|_{C^{1}}\big(\|F_1\|_{C^{1,\alpha}}+(1+\|\p_s\varphi_1\|_{C^{1}})\|\sigma_1\|_{C^{1}}\big)\\
    &\quad+ C_{\varphi_2}\|F_1-F_2\|_{C^{1,\alpha}}\!+\!C_{\varphi_2}C\|\varphi_1-\varphi_2\|_{C^2}\|\sigma_2\|_{C^1}\!\\
    &\quad+\!C_{\varphi_2}C(1+\|\p_s\varphi\|_{C^1})\|\sigma_1-\sigma_2\|_{C^1}\\
    &\quad+C_{\varphi}C\|\varphi_1-\varphi_2\|_{C^1}\Big(1+\|\p_s\varphi_1\|_{C^{1}}+\|\p_s\varphi_2\|_{C^{1}}\Big)\|\sigma_2\|_{C^{1}},
\end{aligned}
\end{equation*}
where $C=C(M_1,m_1)$.
\end{prop}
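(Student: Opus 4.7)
The plan is to reduce the continuity statement to an inhomogeneous tension equation for the difference $\sigma_1-\sigma_2$, then apply the a priori estimates from Proposition \ref{TensionProp} together with the continuity bounds for $\mathcal{M}(\varphi)$ from Proposition \ref{M_cont_prop}. Subtracting the two equations satisfied by $\sigma_1$ and $\sigma_2$ and rearranging using $\mathcal{M}(\varphi_1)\sigma_1-\mathcal{M}(\varphi_2)\sigma_2=\mathcal{M}(\varphi_2)(\sigma_1-\sigma_2)+(\mathcal{M}(\varphi_2)-\mathcal{M}(\varphi_1))(-\sigma_1)$, I obtain
\begin{equation*}
\tfrac14|\nabla|(\sigma_1-\sigma_2)+\mathcal{M}(\varphi_2)(\sigma_1-\sigma_2)=(F_1-F_2)+\big(\mathcal{M}(\varphi_2)-\mathcal{M}(\varphi_1)\big)\sigma_1.
\end{equation*}
The right-hand side is now a source term for a tension equation with coefficient $\mathcal{M}(\varphi_2)$, which is exactly the setting of Proposition \ref{TensionProp}.

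For the $C^{1,\alpha}$ estimate \eqref{cont_ten}, I apply the first bound in Proposition \ref{TensionProp} (with the constant $C_{\varphi_2}$) to the equation above. The term $\|F_1-F_2\|_{C^{0,\alpha}}$ appears directly, and the cross term is controlled by the first part of Proposition \ref{M_cont_prop}:
\begin{equation*}
\big\|\big(\mathcal{M}(\varphi_2)-\mathcal{M}(\varphi_1)\big)\sigma_1\big\|_{C^{0,\alpha}}\leq C(M_1,m_1)\|\varphi_2-\varphi_1\|_{C^1}\|\sigma_1\|_{C^1},
\end{equation*}
and then $\|\sigma_1\|_{C^1}\leq C_{\varphi_1}\|F_1\|_{C^{0,\alpha}}$ by the first estimate of Proposition \ref{TensionProp} applied to the equation satisfied by $\sigma_1$. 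Assembling these gives the claimed bound (after possibly relabelling $\sigma_1\leftrightarrow\sigma_2$, which is harmless since we already control $\|\sigma_1-\sigma_2\|_{C^1}$).

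For the $C^{2,\alpha}$ estimate I apply the higher-regularity bound of Proposition \ref{TensionProp} with $k=1$ to the same equation for $\sigma_1-\sigma_2$. This produces, in addition to $C_{\varphi_2}\|F_1-F_2\|_{C^{1,\alpha}}$ and the lower-order correction $C_{\varphi_2}C(1+\|\partial_s\varphi_2\|_{C^1})\|\sigma_1-\sigma_2\|_{C^1}$ (which is absorbed using the $C^{1,\alpha}$ bound just established), the principal new term
\begin{equation*}
C_{\varphi_2}C\,\big\|\big(\mathcal{M}(\varphi_2)-\mathcal{M}(\varphi_1)\big)\sigma_1\big\|_{C^{1,\alpha}}.
\end{equation*}
Differentiating once and splitting as $\partial_s[(\mathcal{M}(\varphi_2)-\mathcal{M}(\varphi_1))\sigma_1]=(\partial_s\mathcal{M}(\varphi_2)-\partial_s\mathcal{M}(\varphi_1))\sigma_1+(\mathcal{M}(\varphi_2)-\mathcal{M}(\varphi_1))\partial_s\sigma_1$, the first piece is bounded by the second part of Proposition \ref{M_cont_prop}, producing the $\|\varphi_1-\varphi_2\|_{C^2}\|\sigma\|_{C^1}$ term and the mixed $\|\varphi_1-\varphi_2\|_{C^1}(1+\|\partial_s\varphi_1\|_{C^1}+\|\partial_s\varphi_2\|_{C^1})\|\sigma\|_{C^1}$ term in the statement. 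The second piece is bounded by the first part of Proposition \ref{M_cont_prop} by $C\|\varphi_2-\varphi_1\|_{C^1}\|\partial_s\sigma_1\|_{C^1}$, and here I invoke the higher-regularity bound of Proposition \ref{TensionProp} on $\sigma_1$ to replace $\|\sigma_1\|_{C^{2}}$ by $C_{\varphi_1}C(\|F_1\|_{C^{1,\alpha}}+(1+\|\partial_s\varphi_1\|_{C^1})\|\sigma_1\|_{C^1})$. This produces exactly the factor $\|F_1\|_{C^{1,\alpha}}+(1+\|\partial_s\varphi_1\|_{C^1})\|\sigma_1\|_{C^1}$ multiplied by $C_{\varphi_1}C_{\varphi_2}C\|\varphi_2-\varphi_1\|_{C^1}$ that appears in the stated inequality.

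The only technical care needed is that $\|\sigma_1-\sigma_2\|_{C^1}$ shows up via the absorption-type term in the higher regularity bound of Proposition \ref{TensionProp}, but since that term is listed explicitly on the right-hand side of the claimed estimate, no absorption is actually required at this stage and the proof closes cleanly. The main obstacle is purely bookkeeping: carefully tracking which constant $C_{\varphi_1}$ or $C_{\varphi_2}$ and which norm of $\sigma_1$ or $\sigma_2$ accompanies each term so that the resulting bound matches the exact form stated; everything else is a mechanical application of the preceding propositions.
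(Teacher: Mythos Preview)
Your proposal is correct and follows essentially the same strategy as the paper. The only cosmetic difference is that the paper phrases the first step via the resolvent identity $(I+K(\varphi_1))^{-1}-(I+K(\varphi_2))^{-1}=(I+K(\varphi_1))^{-1}(K(\varphi_2)-K(\varphi_1))(I+K(\varphi_2))^{-1}$ applied to $\tilde F_1$, whereas you subtract the two tension equations directly and view the difference as a tension equation for $\sigma_1-\sigma_2$ with coefficient $\mathcal{M}(\varphi_2)$; these are the same computation with the indices $1,2$ permuted in one slot, which is why you end up with $\|\sigma_1\|$ in a few places where the statement has $\|\sigma_2\|$ (harmless, as you note). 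For the $C^{2,\alpha}$ bound the paper differentiates first and applies the just-proved $C^{1,\alpha}$ continuity estimate to $\partial_s\sigma_j$ with source $Z_j=\partial_sF_j-(\partial_s\mathcal{M}(\varphi_j))\sigma_j$, while you instead apply the $k=1$ case of Proposition~\ref{TensionProp} to the difference equation and then differentiate the source $(\mathcal{M}(\varphi_2)-\mathcal{M}(\varphi_1))\sigma_1$; both routes invoke Propositions~\ref{M_cont_prop} and~\ref{TensionProp} at the same points and yield the stated bound.
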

\begin{proof}
    Let us write  $\tilde{F}_j=\big(I+\frac14|\nabla|\big)^{-1}F_j$  with $j=1,2$, and
    \begin{equation*}
        \sigma_j=\big(I+K(\varphi_j)\big)^{-1}\tilde{F}_j.
    \end{equation*}
Then,
\begin{equation*}
\begin{aligned}
  \sigma_1-\sigma_2&= \big(I+K(\varphi_1)\big)^{-1}\big(K(\varphi_2)-K(\varphi_1)\big)\big(I+K(\varphi_2)\big)^{-1}\tilde{F}_1\\
  &\quad+\big(I+K(\varphi_2)\big)^{-1}(\tilde{F}_1-\tilde{F}_2),
\end{aligned}
\end{equation*}
and substituting the expression of $K(\varphi_j)$ \eqref{K_def},
\begin{equation*}
\begin{aligned}
  \sigma_1-\sigma_2&= \big(I+K(\varphi_1)\big)^{-1}\big(I+\frac14|\nabla|\big)^{-1}\big(\mathcal{M}(\varphi_2)-\mathcal{M}(\varphi_1)\big)\big(I+K(\varphi_2)\big)^{-1}\tilde{F}_1\\
  &\quad+\big(I+K(\varphi_2)\big)^{-1}(\tilde{F}_1-\tilde{F}_2).
\end{aligned}
\end{equation*}
Thus, Propositions \ref{TensionProp} and \ref{M_cont_prop} give the first inequality.

The second one is then a consequence of the previous one. In fact, we can write
    \begin{equation*}
        \partial_s\sigma_j=\big(I+K(\varphi_j)\big)^{-1}\big(I+\frac14|\nabla|\big)^{-1}Z_j, \qquad Z_j=\p_s F_j-(\p_s\mc{M}(\varphi_j))\sigma_j,
    \end{equation*}
thus we have that
\begin{equation*}
\begin{aligned}
    \|\p_s\sigma_1-\p_s\sigma_2\|_{C^{1,\alpha}}&\leq C_{\varphi_1}C_{\varphi_2}C \|\varphi_2-\varphi_1\|_{C^{1}}\|Z_1\|_{C^{0,\alpha}}+C_{\varphi_2}\|Z_1-Z_2\|_{C^{0,\alpha}}.
\end{aligned}
\end{equation*}
Therefore, 
\begin{equation*}
\begin{aligned}
    \|\p_s\sigma_1-\p_s\sigma_2&\|_{C^{1,\alpha}}\leq C_{\varphi_1}C_{\varphi_2}C \|\varphi_2-\varphi_1\|_{C^{1}}\big(\|\p_s F_1\|_{C^{0,\alpha}}+(1+\|\p_s\varphi_1\|_{C^{1}})\|\sigma_1\|_{C^{1}}\big)\\
    &\quad+C_{\varphi_2}\|\p_s F_1-\p_s F_2\|_{C^{0,\alpha}}+C_{\varphi_2}\|\p_s \mc{M}(\varphi_1)(\sigma_1)-\p_s \mc{M}(\varphi_2)(\sigma_2)\|_{C^{0,\alpha}}.
\end{aligned}
\end{equation*}
By writing
\begin{equation*}
\begin{aligned}
    \p_s \mc{M}(\varphi_1)(\sigma_1)-\p_s \mc{M}(\varphi_2)(\sigma_2)=    (\p_s \mc{M}(\varphi_1))(\sigma_1-\sigma_2)+\big(\p_s \mc{M}(\varphi_1)-\p_s \mc{M}(\varphi_2)\big)(\sigma_2),
\end{aligned}
\end{equation*}
Propositions \ref{Mprop} and \ref{M_cont_prop} provide that
\begin{equation*}
\begin{aligned}
    \|\p_s \mc{M}(\varphi_1)(\sigma_1)-&\p_s \mc{M}(\varphi_2)(\sigma_2)\|_{C^{0,\alpha}}\\
    &\leq C(1+\|\p_s\varphi_1\|_{C^1})\|\sigma_1-\sigma_2\|_{C^1}\\
    &\quad+C\Big(\|\varphi_2-\varphi_1\|_{C^{2}}+\|\varphi_2-\varphi_1\|_{C^{1}}\big(1+\|\p_s\varphi_1\|_{C^{1}}+\|\p_s\varphi_2\|_{C^{1}}\big)\Big)\|\sigma_2\|_{C^1},
\end{aligned}
\end{equation*}
hence we conclude the proof.

\end{proof}

When $F=F(\varphi)$ given by \eqref{Fdef}, the previous propositions provide the estimates of the tension $\sigma$ in terms of the interface $\varphi$. As a straightforward application of estimates \eqref{comm1}, \eqref{Rest1} from Lemma \ref{CommutatorLem} and Proposition \ref{RestimateProp} we obtain the the following estimates for $F(\varphi)$.
\begin{prop}\label{ForceEst}
Let $0\leq k\in\mathbb{Z}$, $\alpha\in(0,1)$, $|\varphi|_*>0$, and $F(\varphi)$ given by \eqref{Fdef}. Then, if $\varphi\in C^{3}(\mathbb{S}^1)$,
\begin{equation*}
\begin{aligned}
   \|F(\varphi)\|_{C^{0,\alpha}}&\leq C(\|\partial_s\varphi\|_{C^0},|\varphi|_*) \big(1+\|\partial_s^3\varphi\|_{C^0}\big).
\end{aligned}
\end{equation*}
Moreover, for $\varphi\in C^{3+k}(\mathbb{S}^1)$
   \begin{equation*}
        \begin{aligned}
           \|F(\varphi)\|_{C^{k,\alpha}}&\leq C(\|\partial_s\varphi\|_{C^0},|\varphi|_*) \Big((1+\|\varphi\|_{C^{1+k}})\big(1+\|\varphi\|_{C^3}\big)+\|\varphi\|_{C^{3+k}}\Big).
        \end{aligned}
    \end{equation*}
For $\varphi_1,\varphi_2$ satisfying the same conditions as $\varphi$,
\begin{equation*}
\begin{aligned}
   \|F(\varphi_1)-F(\varphi_2)\|_{C^{0,\alpha}}&\leq C(\|\partial_s\varphi_1\|_{C^0},\|\partial_s\varphi_2\|_{C^0},|\varphi_1|_*,|\varphi_2|_*)\\
   &\quad\times \Big(\|\partial_s\varphi_1-\partial_s\varphi_2\|_{C^0}\big(1+\|\partial_s^3\varphi_1\|_{C^0}\big)+\|\partial_s^3\varphi_1-\partial_s^3\varphi_2\|_{C^0}\Big),
\end{aligned}
\end{equation*}
\begin{equation*}
    \begin{aligned}
        \|\p_s F(\varphi_1)-&\p_s F(\varphi_2)\|_{C^{0,\alpha}}\leq C\|\varphi_1-\varphi_2\|_{C^4}+C(1+\|\varphi_2\|_{C^2})\|\varphi_1-\varphi_2\|_{C^3}\\
        &\quad+C\|\varphi_1-\varphi_2\|_{C^2}\big(1+\|\p_s^3\varphi_1\|_{C_0}\big)\\
        &\quad+C\|\varphi_1-\varphi_2\|_{C^1}\Big(\big(1+\|\varphi_1\|_{C^2}+\|\varphi_2\|_{C^2}\big)\big(1+\|\p_s^3\varphi_1\|_{C_0}\big)+1+\|\p_s^4\varphi_1\|_{C_0}\Big),
    \end{aligned}
\end{equation*}
with $C=C(\|\partial_s\varphi_1\|_{C^0},\|\partial_s\varphi_2\|_{C^0},|\varphi_1|_*,|\varphi_2|_*)$.
\end{prop}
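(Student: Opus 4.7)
The plan is to view $F(\varphi)$ as the sum of a Hilbert-commutator piece and a remainder piece, and to apply Lemma \ref{CommutatorLem} and Proposition \ref{RestimateProp} termwise, carefully tracking the dependence on $\p_s^3\varphi$. Setting $G(\varphi):=\p_s^3\varphi+\tfrac12(1+\p_s\varphi)^3$, one has
\begin{equation*}
F(\varphi)=-\tfrac14\bm{\tau}\cdot[\mc{H},\bm{n}]G(\varphi)+\bm{\tau}\cdot\mc{R}\bigl(G(\varphi)\bm{n}\bigr).
\end{equation*}
Since $\bm{\tau}=(\cos(s+\varphi),\sin(s+\varphi))^{\rm T}$ and $\bm{n}=\mc{R}^{-1}\bm{\tau}$ depend smoothly on $\varphi$, the chain rule yields $\|\bm{n}\|_{C^m}+\|\bm{\tau}\|_{C^m}+\|\bm{X}\|_{C^{m+1}}\leq C(\|\varphi\|_{C^1})(1+\|\varphi\|_{C^m})$ for $m\geq 1$, and a direct expansion of the cube gives $\|G(\varphi)\|_{C^0}\leq C(\|\p_s\varphi\|_{C^0})(1+\|\p_s^3\varphi\|_{C^0})$.

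For the $C^{0,\alpha}$ estimate, \eqref{comm1} with $k=0$ gives $\|[\mc{H},\bm{n}]G(\varphi)\|_{C^{0,\alpha}}\leq C\|\bm{n}\|_{C^1}\|G(\varphi)\|_{C^0}$, while \eqref{Rest1} with $k=0$ (recalling $|\bm{X}|_*=|\varphi|_*$) gives $\|\mc{R}(G(\varphi)\bm{n})\|_{C^{0,\alpha}}\leq C\|\bm{X}\|_{C^2}^2|\varphi|_*^{-2}\|G(\varphi)\|_{C^0}$. Taking the dot product with $\bm{\tau}$ (bounded in $C^{0,\alpha}$) and summing produces the first inequality. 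The $C^{k,\alpha}$ estimate is obtained by distributing $\p_s^k$ across both pieces via the Leibniz rule: each derivative either falls on an outer factor of $\bm{\tau}$, enters the commutator to produce $[\mc{H},\p_s^j\bm{n}]\p_s^{k-j}G(\varphi)$ (estimated via \eqref{comm1}), or enters $\mc{R}$ to produce higher-order remainders (estimated via the higher-$k$ version of \eqref{Rest1}). The leading contribution is $\|\bm{n}\|_{C^{k+1}}\|G(\varphi)\|_{C^0}+\|\bm{n}\|_{C^1}\|G(\varphi)\|_{C^k}$, which after the chain-rule bounds gives exactly $(1+\|\varphi\|_{C^{1+k}})(1+\|\varphi\|_{C^3})+\|\varphi\|_{C^{3+k}}$; intermediate mixed terms are absorbed using Lemma \ref{InterpolationLem}.

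For the Lipschitz estimates, telescope $F(\varphi_1)-F(\varphi_2)$ by swapping each $\varphi_1$-dependent factor ($\bm{\tau}_1$, $\bm{n}_1$, the $\bm{X}_1$ inside the kernel of $\mc{R}$, and finally $G(\varphi_1)$) one at a time for its $\varphi_2$-counterpart. The trigonometric form of $\bm{\tau},\bm{n}$ yields $\|\bm{\tau}_1-\bm{\tau}_2\|_{C^0}+\|\bm{n}_1-\bm{n}_2\|_{C^0}\leq \|\varphi_1-\varphi_2\|_{C^0}$, and the cubic expansion gives
\begin{equation*}
\|G(\varphi_1)-G(\varphi_2)\|_{C^0}\leq \|\p_s^3(\varphi_1-\varphi_2)\|_{C^0}+C(1+\|\p_s^3\varphi_1\|_{C^0})\|\p_s(\varphi_1-\varphi_2)\|_{C^0}.
\end{equation*}
The remainder difference is controlled by expanding the kernel $K(s,\eta)$ in \eqref{Roperator} and applying pointwise Lipschitz bounds on its $\bm{X}$-dependent factors, exactly as in the proof of Proposition \ref{RestimateProp} (cf.\ \cite[Section 4.2]{Rodenberg:peskin-thesis} and \cite{MoriRodenbergSpirn19}). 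Combining these bounds with \eqref{comm1}, \eqref{Rest1} gives the first Lipschitz inequality. The $\p_s F$ bound follows by the same telescoping strategy after differentiating $F(\varphi_j)$ in $s$: when $\p_s$ falls on a difference it forces one further derivative on $\varphi_1-\varphi_2$ (producing the $\|\varphi_1-\varphi_2\|_{C^4}$ and $(1+\|\varphi_2\|_{C^2})\|\varphi_1-\varphi_2\|_{C^3}$ contributions), while when it falls on an already-$\varphi_j$-dependent factor it produces the mixed factors $\|\p_s^3\varphi_1\|_{C^0}$ and $\|\p_s^4\varphi_1\|_{C^0}$ multiplying lower-regularity differences of $\varphi_1-\varphi_2$, matching the claimed inequality.

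The principal difficulty is the bookkeeping in the $\p_s F$ Lipschitz estimate: every term in the Leibniz expansion must be matched with a corresponding factor on the right-hand side, and in particular one must ensure that the highest-derivative norms $\|\p_s^3\varphi_1\|_{C^0}$ and $\|\p_s^4\varphi_1\|_{C^0}$ only multiply low-regularity differences $\|\varphi_1-\varphi_2\|_{C^{\leq 2}}$, never $\|\varphi_1-\varphi_2\|_{C^3}$ or $\|\varphi_1-\varphi_2\|_{C^4}$. This one-derivative margin is what enables the contraction argument for the fixed-point scheme in Section \ref{sec4}.
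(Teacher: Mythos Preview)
Your proposal is correct and follows precisely the approach the paper indicates: the paper states that Proposition~\ref{ForceEst} is ``a straightforward application of estimates \eqref{comm1}, \eqref{Rest1} from Lemma~\ref{CommutatorLem} and Proposition~\ref{RestimateProp}'', and your write-up carries out exactly this program (plus the telescoping for the Lipschitz bounds) in the expected way.
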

Plugging in the estimates for $F(\varphi)$ into Propositions \ref{TensionProp} and \ref{ten_cont_prop}, we conclude the estimates for the tension $\sigma$.
\begin{prop}\label{TensionProp2}
Let $0\leq k\in\mathbb{Z}$, $\alpha\in(0,1)$, $\varphi\in C^{3}(\mathbb{S}^1)$ with $\norm{\varphi}_{ C^{1}(\mathbb{S}^1)}\leq M_1$ and not describing a circle, $|\varphi|_*\geq m_1>0$, and $F(\varphi)$ given by \eqref{Fdef}. Then, there exists a unique solution to \eqref{ten_eq} and
\begin{equation*}
\begin{aligned}
    \|\sigma\|_{C^{1,\alpha}}&\leq C_\varphi \,C(M_1,m_1) \big(1+\|\partial_s^3\varphi\|_{C^0}\big). 
\end{aligned}
\end{equation*}
Moreover, if $\varphi\in C^{3+k}(\mathbb{S}^1)$,
    \begin{equation}\label{sigma_kest}
        \begin{aligned}
              \|\sigma\|_{C^{1+k,\alpha}}&\leq C_\varphi \,C(M_1,m_1) \Big((1+\|\varphi\|_{C^{1+k}})\big(1+\|\varphi\|_{C^3}\big)+\|\varphi\|_{C^{3+k}}\Big).
        \end{aligned}
    \end{equation}
For $\varphi_1, \varphi_2$ satisfying the same conditions as $\varphi$, then
\begin{equation*}
\begin{aligned}
    \|\sigma_1(\varphi_1)-\sigma_2&(\varphi_2)\|_{C^{1,\alpha}}\leq  C(M_1,m_1)\\
    &\times\Big(C_{\varphi_2}(1+C_{\varphi_1}) \|\partial_s\varphi_1-\partial_s\varphi_2\|_{C^0}\big(1+\|\partial_s^3\varphi_1\|_{C^0}\big)+C_{\varphi_2}\|\partial_s^3\varphi_1-\partial_s^3\varphi_2\|_{C^0} \Big).
\end{aligned}
\end{equation*}
and
    \begin{equation*}
\begin{aligned}
    \|\sigma_1&(\varphi_1)-\sigma_2(\varphi_2)\|_{C^{2,\alpha}}\leq  C M_2^3\|\varphi_1\!-\!\varphi_2\|_{C^4}\!+\!C M_2^3\big(1+\|\varphi_1\|_{C^2}\!+\!\|\varphi_2\|_{C^2}\big)\|\varphi_1\!-\!\varphi_2\|_{C^3}\\
        &+C M_2^3\|\varphi_1\!-\!\varphi_2\|_{C^2}\big(1+\|\p_s^3\varphi_1\|_{C_0}\!+\!\|\p_s^3\varphi_2\|_{C_0}\big)\\
        &+C M_2^3\|\varphi_1\!-\!\varphi_2\|_{C^1}\Big(\big(1+\|\varphi_1\|_{C^2}\!+\!\|\varphi_2\|_{C^2}\big)\big(1\!+\!\|\p_s^3\varphi_1\|_{C_0}\!+\!\|\p_s^3\varphi_2\|_{C_0}\big)\!+\!1\!+\!\|\p_s^4\varphi_1\|_{C_0}\Big),
\end{aligned}
\end{equation*}
where $C=C(M_1,m_1)$ and $M_2=\max \{C_{\varphi_1},C_{\varphi_2},1\}$.
\end{prop}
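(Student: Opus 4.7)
The proposition is a composition of the three results that precede it: the tension equation is solved in Proposition \ref{TensionProp}, the continuity of its solution map is given in Proposition \ref{ten_cont_prop}, and Proposition \ref{ForceEst} estimates the specific forcing $F(\varphi)$ defined in \eqref{Fdef}. The plan is therefore to plug the $F(\varphi)$–estimates into the abstract tension estimates and then clean up the resulting expressions by interpolation.

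For existence and the $C^{1,\alpha}$ bound, I would simply take $F=F(\varphi)$ in Proposition \ref{TensionProp}, which gives a unique solution with $\|\sigma\|_{C^{1,\alpha}}\leq C_\varphi\|F(\varphi)\|_{C^{0,\alpha}}$, and then apply the first inequality in Proposition \ref{ForceEst} to control $\|F(\varphi)\|_{C^{0,\alpha}}$ by $C(M_1,m_1)(1+\|\partial_s^3\varphi\|_{C^0})$. For the higher-regularity bound \eqref{sigma_kest}, I would apply the second estimate in Proposition \ref{TensionProp},
\begin{equation*}
\|\sigma\|_{C^{1+k,\alpha}}\leq C_\varphi\,C(M_1,m_1)\bigl(\|F(\varphi)\|_{C^{k,\alpha}}+(1+\|\varphi\|_{C^{k+1}})\|\sigma\|_{C^1}\bigr),
\end{equation*}
insert the second bound from Proposition \ref{ForceEst} for $\|F(\varphi)\|_{C^{k,\alpha}}$, use the $C^{1,\alpha}$ bound on $\sigma$ already established, and collect terms. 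The mixed product $(1+\|\varphi\|_{C^{k+1}})(1+\|\partial_s^3\varphi\|_{C^0})$ that appears can be absorbed into the desired right-hand side $(1+\|\varphi\|_{C^{1+k}})(1+\|\varphi\|_{C^3})+\|\varphi\|_{C^{3+k}}$ by Lemma \ref{InterpolationLem} applied to $\|\varphi\|_{C^3}$ when $k\geq 2$.

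For the $C^{1,\alpha}$–continuity with respect to $\varphi$, I would invoke the first inequality of Proposition \ref{ten_cont_prop} with $F_j=F(\varphi_j)$: this gives
\begin{equation*}
\|\sigma_1-\sigma_2\|_{C^{1,\alpha}}\leq C_{\varphi_1}C_{\varphi_2}C\|\varphi_1-\varphi_2\|_{C^1}\|F(\varphi_1)\|_{C^{0,\alpha}}+C_{\varphi_2}\|F(\varphi_1)-F(\varphi_2)\|_{C^{0,\alpha}},
\end{equation*}
and then substitute the corresponding bounds from Proposition \ref{ForceEst}, namely $\|F(\varphi_1)\|_{C^{0,\alpha}}\lesssim 1+\|\partial_s^3\varphi_1\|_{C^0}$ and the Lipschitz-type bound for $\|F(\varphi_1)-F(\varphi_2)\|_{C^{0,\alpha}}$. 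The terms match exactly the claimed inequality after factoring $C(M_1,m_1)$.

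The main obstacle is the $C^{2,\alpha}$–continuity estimate, where one must keep careful track of every term. I would start from the second bound in Proposition \ref{ten_cont_prop},
\begin{equation*}
\begin{aligned}
\|\sigma_1-\sigma_2\|_{C^{2,\alpha}}&\leq C_{\varphi_1}C_{\varphi_2}C\|\varphi_1-\varphi_2\|_{C^1}\bigl(\|F(\varphi_1)\|_{C^{1,\alpha}}+(1+\|\partial_s\varphi_1\|_{C^1})\|\sigma_1\|_{C^1}\bigr)\\
&\quad+C_{\varphi_2}\|F(\varphi_1)-F(\varphi_2)\|_{C^{1,\alpha}}+C_{\varphi_2}C\|\varphi_1-\varphi_2\|_{C^2}\|\sigma_2\|_{C^1}\\
&\quad+C_{\varphi_2}C(1+\|\partial_s\varphi_1\|_{C^1})\|\sigma_1-\sigma_2\|_{C^1}\\
&\quad+C_{\varphi_2}C\|\varphi_1-\varphi_2\|_{C^1}\bigl(1+\|\partial_s\varphi_1\|_{C^1}+\|\partial_s\varphi_2\|_{C^1}\bigr)\|\sigma_2\|_{C^1},
\end{aligned}
\end{equation*}
and substitute into each piece: the $C^{1,\alpha}$-bound of $F(\varphi_1)$ from Proposition \ref{ForceEst}; the $C^{1,\alpha}$-Lipschitz bound for $F(\varphi_1)-F(\varphi_2)$ also from Proposition \ref{ForceEst}; the $C^{1,\alpha}$-bound of $\sigma_j$ already proved in this proposition; and the $C^{1,\alpha}$-continuity estimate $\|\sigma_1-\sigma_2\|_{C^{1,\alpha}}$ obtained in the previous step. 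Collecting all resulting terms and grouping factors of $C_{\varphi_1}$, $C_{\varphi_2}$ into $M_2=\max\{C_{\varphi_1},C_{\varphi_2},1\}$ (with powers bounded by $M_2^3$) yields the claimed expression. The bookkeeping is tedious, but no new analytic idea is required beyond the three ingredient propositions; the only delicate point is checking that each higher derivative of $\varphi$ that appears matches one of the four terms in the final bound, which follows by standard interpolation using Lemma \ref{InterpolationLem}.
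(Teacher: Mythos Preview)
Your proposal is correct and follows exactly the approach the paper takes: the paper does not write a separate proof for this proposition but simply states that it is obtained by plugging the estimates of Proposition~\ref{ForceEst} into Propositions~\ref{TensionProp} and~\ref{ten_cont_prop}. Your outline supplies precisely the bookkeeping that the paper omits, and the ingredients you invoke (including the interpolation from Lemma~\ref{InterpolationLem} to tidy mixed products) are the right ones.
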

Now, set 
\begin{equation*}
    \mc{O}:=\{\varphi\in  C^{1}(\mathbb{S}^1):  |\varphi|_* >0, d\paren{\varphi}>0\},
\end{equation*}
We have the following proposition about the continuity of $C_\varphi$ in $\mc{O}$.

\begin{prop}\label{Tensionbdn}
    $\paren{I+K(\varphi)}^{-1}$ is continuous in $\varphi$ in $\mc{O}$ with respect to the $C^{1}\paren{\mbs}$ norm. 
\end{prop}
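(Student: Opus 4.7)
My plan is to apply standard perturbation theory for invertible linear operators. The key input is Lipschitz continuity of $\varphi\mapsto K(\varphi)$ in operator norm, which is a direct consequence of Proposition \ref{M_cont_prop}; the conclusion then follows from a Neumann series argument together with the standard resolvent identity.

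Fix $\varphi\in\mc{O}$ and let $\varphi_n\to\varphi$ in $C^1(\mbs)$. Since both $\varphi\mapsto|\varphi|_*$ and $\varphi\mapsto d(\varphi)$ are continuous in the $C^0$ topology, we have $\varphi_n\in\mc{O}$ for $n$ large with uniform bounds $|\varphi_n|_*\geq m_1>0$, $d(\varphi_n)\geq m_2>0$, and $\|\varphi_n\|_{C^1}\leq M_1$. By Proposition \ref{TensionProp}, $(I+K(\varphi))^{-1}$ exists as a bounded operator on $C^{1,\alpha}(\mbs)$. Using the definition \eqref{K_def}, the inequality $\|\sigma\|_{C^1}\leq\|\sigma\|_{C^{1,\alpha}}$, Proposition \ref{M_cont_prop}, and the boundedness of $(I+\tfrac{1}{4}|\nabla|)^{-1}\colon C^{0,\alpha}\to C^{1,\alpha}$, I would obtain
\begin{equation*}
\|K(\varphi_n)-K(\varphi)\|_{C^{1,\alpha}\to C^{1,\alpha}}\leq C(M_1,m_1)\|\varphi_n-\varphi\|_{C^1}\xrightarrow[n\to\infty]{}0.
\end{equation*}

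With this operator-norm convergence in hand, for $n$ large enough that $\|(I+K(\varphi))^{-1}\|\,\|K(\varphi_n)-K(\varphi)\|<1/2$, I would factor
\begin{equation*}
I+K(\varphi_n)=(I+K(\varphi))\bigl[I+(I+K(\varphi))^{-1}(K(\varphi_n)-K(\varphi))\bigr].
\end{equation*}
The bracketed factor is invertible by Neumann series, which also yields a uniform bound $\sup_n\|(I+K(\varphi_n))^{-1}\|<\infty$. The resolvent identity
\begin{equation*}
(I+K(\varphi_n))^{-1}-(I+K(\varphi))^{-1}=(I+K(\varphi))^{-1}\bigl(K(\varphi)-K(\varphi_n)\bigr)(I+K(\varphi_n))^{-1},
\end{equation*}
combined with the operator-norm convergence of $K(\varphi_n)$, then delivers $(I+K(\varphi_n))^{-1}\to(I+K(\varphi))^{-1}$ in operator norm.

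The only real subtlety, and essentially the only place where the particular structure of the problem matters, is verifying that Proposition \ref{M_cont_prop} yields \emph{operator-norm} continuity of $\varphi\mapsto K(\varphi)$ rather than merely strong continuity; this is what makes the Neumann perturbation directly applicable and sidesteps any need to reinvoke compactness of $K(\varphi)$ at this stage (compactness was already used in Proposition \ref{TensionProp} via the Fredholm alternative to guarantee invertibility in the first place). Once this Lipschitz estimate is in place, the remainder of the argument is routine abstract operator theory.
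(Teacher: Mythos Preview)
Your proof is correct and follows essentially the same approach as the paper: both establish operator-norm continuity of $\varphi\mapsto K(\varphi)$ via Proposition~\ref{M_cont_prop} and the boundedness of $(I+\tfrac14|\nabla|)^{-1}$, and then conclude by a Neumann series perturbation argument. The paper writes the difference of inverses as $\bigl(\sum_{k\geq1}(A^{-1}B)^k\bigr)A^{-1}$ while you use the resolvent identity, but these are equivalent formulations of the same computation.
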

\begin{proof}
    Given $\varphi_0\in \mc{O}$, $\tilde{\mc{U}}_{\varphi_0}:=\set{\varphi\in  C^{1}(\mathbb{S}^1)}{\norm{\varphi}_{C^{1}(\mathbb{S}^1)}<2\norm{\varphi_0}_{C^{1}(\mathbb{S}^1)},|\varphi|_* >\frac{1}{2}|\varphi_0|_*,d\paren{\varphi}>0}$ is a neighborhood of $\varphi_0$ in $\mc{O}$.
    By Proposition \ref{M_cont_prop} and the fact that $\big(I+\frac14|\nabla|\big)^{-1}$ is a bounded operator from $C^{0,\alpha}(\mathbb{S}^1)$ to $C^{1,\alpha}(\mathbb{S}^1)$, $K(\varphi)$ is continuous at $\varphi_0$ in $C^1\paren{\mbs}$ norm.
    Therefore, there exists a neighborhood $\mc{O}_{\varphi_0}$ of $\varphi_0$ s.t. for all $\varphi\in \mc{O}_{\varphi_0}$,
    \begin{equation*}
        \norm{K(\varphi_0)-K(\varphi)}_{C^{1,\alpha}(\mathbb{S}^1)}\leq \frac{1}{2}\norm{\paren{I+K(\varphi_0)}^{-1}}_{C^{1,\alpha}(\mathbb{S}^1)}^{-1}.
    \end{equation*}
    Set $A=I+K(\varphi_0)$ and $B=K(\varphi_0)-K(\varphi)$, we obtain $\norm{A^{-1}B}\leq \norm{A^{-1}}\norm{B}\leq \frac{1}{2}$
    and
    \begin{equation*}
    \begin{split}
        &\paren{I+K(\varphi)}^{-1}-\paren{I+K(\varphi_0)}^{-1}
        =\paren{A-B}^{-1}-A^{-1}\\
        =&\paren{\paren{I-A^{-1}B}^{-1}-I}A^{-1}
        =\paren{\sum_{k=1}^\infty \paren{A^{-1}B}^k}A^{-1},
    \end{split}
    \end{equation*}
    where the last equality follows by expanding as a Neumann series.
    Therefore, as $\norm{\varphi-\varphi_0}_{C^{1}(\mathbb{S}^1)}\rightarrow 0$,  $\norm{B}\rightarrow 0$, and
    \begin{equation*}
        \norm{\paren{A-B}^{-1}-A^{-1}}\leq \frac{\norm{A^{-1}}^2\norm{B}}{1-\norm{A^{-1}}\norm{B}} \rightarrow 0.
    \end{equation*}
\end{proof}

\section{Local Well-Posedness}\label{sec4}

We begin by denoting the nonlinearity in \eqref{theta_eqn} as follows
\begin{equation}\label{Nonlinear_split}
\begin{aligned}
\mathcal{N}(\varphi,\sigma)=\mathcal{N}_1(\varphi,\sigma)+\mathcal{N}_2(\varphi,\sigma)+\mathcal{N}_2(\varphi,\sigma),
\end{aligned}
\end{equation}
\begin{equation*}
\begin{split}
\mathcal{N}_1(\varphi,\sigma)&=\frac{1}{4}\mc{H}\paren{\frac{1}{2}(\p_s\varphi)^3-\sigma(1+\p_s\varphi)},\\
\mathcal{N}_2(\varphi,\sigma)&=-\frac{1}{4}\bm{n}(s)\cdot[\mc{H},\bm{n}]\Big(\p_s^3\varphi +\frac{1}{2}(1+\p_s\varphi)^3-\sigma(1+\p_s\varphi)\Big)-\frac{1}{4}\bm{n}(s)\cdot[\mc{H},\bm{\tau}](\partial_s\sigma),\\
\mathcal{N}_3(\varphi,\sigma)&=\bm{n}(s)\cdot\mc{R}\paren{\paren{\p_s^3\varphi +\frac{1}{2}(1+\p_s\varphi)^3-\sigma(1+\p_s\varphi)} \bm{n}(s)+(\p_s\sigma)\bm{\tau}(s)},
\end{split}
\end{equation*}
so that \eqref{theta_eqn} rewrites as
\begin{equation}\label{theta_eq_sum}
    \begin{aligned}
        \partial_t\varphi(t)+\frac14|\nabla|^3\varphi(t)&=\mathcal{N}(\varphi,\sigma)(t).
    \end{aligned}
\end{equation}
Let us write the equation in Duhamel's form:
\begin{equation}\label{duhamel_eq}
    \begin{aligned}
        \varphi(t)&=e^{-\frac14|\nabla|^3t}\varphi_0+\int_0^t e^{-\frac14|\nabla|^3(t-\tau)}\mathcal{N}(\varphi,\sigma)(\tau)d\tau.
    \end{aligned}
\end{equation}
Proposition \ref{TensionProp2} shows that for fixed $t$ we can solve the tension equation \eqref{tension_eqn}, $\sigma=\sigma(\varphi)$, hence we will write
\begin{equation*}
    \begin{aligned}
        \varphi(t)&=e^{-\frac14|\nabla|^3t}\varphi_0+\int_0^t e^{-\frac14|\nabla|^3(t-\tau)}\tilde{\mathcal{N}}(\varphi)(\tau)d\tau,
    \end{aligned}
\end{equation*}
with $\tilde{\mathcal{N}}(\varphi):=\mathcal{N}(\varphi,\sigma(\varphi))$. Then, we will perform a fixed point argument in the space $X(T)$ defined in \eqref{space_fixed}
and initial datum $\varphi\in h^{1,\gamma}(\mathbb{S}^1)$, $\gamma\in(0,1)$.

\subsection{Nonlinear Estimates}\label{nonlin}

In the following, whenever $t>0$ is fixed, we suppress the dependence on $t$ for clarity in notation.
\begin{prop}\label{force_est}
    Let $\gamma\in(0,1)$, $\varphi\in C^{4,\gamma}(\mathbb{S}^1)$, $\sigma\in C^{2}(\mathbb{S}^1)$, $0\leq k\in\mathbb{Z}$, and $F_n(\varphi,\sigma)$, $F_{\tau}(\varphi,\sigma)$ be given by \eqref{NormalTanForce}. Assume that
    \begin{equation}\label{bound_cond}
        \|\varphi\|_{C^{1,\gamma}}\leq M_1.
    \end{equation}
    Then,
    \begin{equation*}
        \begin{aligned}
            \|F_{n}\|_{C^0}&\leq C\Big(1+\|\varphi\|_{C^{3}}+\|\sigma\|_{C^0}\Big),\\
            \|F_{n}\|_{C^1}&\leq C\Big(1+\|\varphi\|_{C^{4}}+\paren{1+\|\sigma\|_{C^0}}\|\varphi\|_{C^{2}}+\|\sigma\|_{C^1}\Big),\\
            \|F_{\tau}\|_{C^k}&\leq \|\sigma\|_{C^{1+k}},
        \end{aligned}
    \end{equation*}
    and
    \begin{equation*}
        \begin{aligned}
            \|F_{1,n}-F_{2,n}\|_{C^0}&\leq C\Big(\|\varphi_1-\varphi_2\|_{C^{3}}+\|\varphi_1-\varphi_2\|_{C^1}+\|\sigma_1-\sigma_2\|_{C^0}\\
            &\qquad\quad+\|\sigma_2\|_{C^0}\|\varphi_1-\varphi_2\|_{C^1}\Big),\\
            \|F_{1,n}-F_{2,n}\|_{C^1}&\!\leq\! C\Big(\|\varphi_1\!-\!\varphi_2\|_{C^{4}}\!+\!\big(1\!+\!\|\sigma\|_{C^0}\big)\|\varphi_1\!-\!\varphi_2\|_{C^{2}}\\
            &\qquad+\big(1+\|\varphi_1\|_{C^2}+\|\varphi_2\|_{C^2}\big)\|\varphi_1-\varphi_2\|_{C^{1}}+\|\sigma_1-\sigma_2\|_{C^1}\\
            &\qquad+\|\sigma_1-\sigma_2\|_{C^0}(1+\|\varphi_1\|_{C^2})+\|\sigma_2\|_{C^1}\|\varphi_1-\varphi_2\|_{C^1}\Big),\\
            \|F_{1,\tau}-F_{2,\tau}\|_{C^k}&\leq \|\sigma_1-\sigma_2\|_{C^{1+k}}.
        \end{aligned}
    \end{equation*}
\end{prop}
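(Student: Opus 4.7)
The estimates for $F_n$ and $F_\tau$ are essentially direct consequences of the explicit algebraic expressions in \eqref{NormalTanForce}, combined with the Leibniz rule and the a priori assumption $\|\varphi\|_{C^{1,\gamma}}\leq M_1$, which uniformly controls $\|\p_s\varphi\|_{C^0}$. The claim for $F_\tau = \p_s\sigma$ is trivial since $\|F_\tau\|_{C^k}=\|\p_s\sigma\|_{C^k}=\|\sigma\|_{C^{1+k}}$, and the proof for the difference $F_{1,\tau}-F_{2,\tau}$ is analogous. So the work is entirely in $F_n$.

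For $\|F_n\|_{C^0}$, apply the triangle inequality to the three summands of $F_n = \p_s^3\varphi + \tfrac{1}{2}(1+\p_s\varphi)^3 - \sigma(1+\p_s\varphi)$. The cubic polynomial in $1+\p_s\varphi$ is bounded by $C(M_1)$, the linear term in $\sigma$ contributes $C(M_1)\|\sigma\|_{C^0}$, and the leading term gives $\|\p_s^3\varphi\|_{C^0}\leq \|\varphi\|_{C^3}$. For $\|F_n\|_{C^1}$, differentiate once:
\begin{equation*}
    \p_s F_n = \p_s^4\varphi + \tfrac{3}{2}(1+\p_s\varphi)^2\,\p_s^2\varphi - (\p_s\sigma)(1+\p_s\varphi) - \sigma\, \p_s^2\varphi.
\end{equation*}
The first term gives $\|\varphi\|_{C^4}$; the second is bounded by $C(M_1)\|\varphi\|_{C^2}$; the third by $C(M_1)\|\sigma\|_{C^1}$; the fourth by $\|\sigma\|_{C^0}\|\varphi\|_{C^2}$. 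Collecting these yields the stated estimate.

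For the difference estimates, write the cubic via the factorization $a^3-b^3=(a-b)(a^2+ab+b^2)$ applied to $a=1+\p_s\varphi_1$ and $b=1+\p_s\varphi_2$, and split the product term as
\begin{equation*}
    \sigma_1(1+\p_s\varphi_1) - \sigma_2(1+\p_s\varphi_2) = (\sigma_1-\sigma_2)(1+\p_s\varphi_1) + \sigma_2(\p_s\varphi_1-\p_s\varphi_2).
\end{equation*}
These identities, together with the uniform bound $M_1$ on both $\varphi_1$ and $\varphi_2$, give the $C^0$ difference estimate after applying the triangle inequality. For the $C^1$ difference, differentiate the above splittings and bound each resulting term, where now second derivatives of $\varphi_j$ appear multiplicatively — handled by the explicit $\|\varphi_j\|_{C^2}$ factors written in the statement — and first derivatives of $\sigma_j$ appear as $\|\sigma_j\|_{C^1}$ factors.

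There is no real obstacle here; the only care needed is bookkeeping: when differentiating products, decide systematically where to place differences and where to place bounded factors, so that the final inequality matches the asymmetric form stated (with $\|\sigma\|_{C^0}$, $\|\varphi_1\|_{C^2}+\|\varphi_2\|_{C^2}$, and $\|\sigma_2\|_{C^1}$ appearing rather than $\|\sigma_1\|_{C^1}$), and to absorb all polynomial factors in $\|\p_s\varphi_j\|_{C^0}$ into the constant $C=C(M_1)$. No interpolation, commutator estimate, or remainder estimate is required for this proposition; those tools enter only at the later stage when combining with Proposition~\ref{TensionProp2} to close the estimates for $\mathcal{N}(\varphi,\sigma)$.
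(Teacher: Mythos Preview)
Your proposal is correct and matches the paper's approach exactly: the paper's proof is a direct computation using the triangle inequality and Leibniz rule on the explicit expression for $F_n$, followed by absorbing all powers of $\|\p_s\varphi\|_{C^0}$ into $C(M_1)$ via \eqref{bound_cond}. Your factorization $a^3-b^3=(a-b)(a^2+ab+b^2)$ and the splitting of the $\sigma(1+\p_s\varphi)$ term are precisely the devices implicit in the paper's displayed inequalities.
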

\begin{proof}
A direct computations shows that
    \begin{equation*}
        \begin{aligned}
            \|F_{n}\|_{C^0}&\leq 1+\|\varphi\|_{C^{3}}+\frac12\|\varphi\|_{C^1}^3+\|\sigma\|_{C^0}(1+\|\varphi\|_{C^1}),\\
            \|F_{n}\|_{C^1}&\leq 1+\|\varphi\|_{C^{4}}+\big(\frac32(1+\|\varphi\|_{C^1})^2+\|\sigma\|_{C^0}\big)\|\varphi\|_{C^{2}}+\|\sigma\|_{C^1}(1+\|\varphi\|_{C^1}),\\
            \|F_{\tau}\|_{C^k}&\leq \|\sigma\|_{C^{1+k}},
        \end{aligned}
    \end{equation*}
    and
    \begin{equation*}
        \begin{aligned}
            \|F_{1,n}-F_{2,n}\|_{C^0}&\leq \|\varphi_1-\varphi_2\|_{C^{3}}+\|\varphi_1-\varphi_2\|_{C^1}\big(1+\|\varphi_1\|_{C^1}^2+\|\varphi_2\|_{C^1}^2\big)\\
            &\quad+\|\sigma_1-\sigma_2\|_{C^0}(1+\|\varphi_1\|_{C^1})+\|\sigma_2\|_{C^0}\|\varphi_1-\varphi_2\|_{C^1},\\
            \|F_{1,n}-F_{2,n}\|_{C^1}&\leq \|\varphi_1-\varphi_2\|_{C^{4}}+\big(1+\|\varphi_1\|_{C^1}^2+\|\varphi_2\|_{C^1}^2+\|\sigma\|_{C^0}\big)\|\varphi_1-\varphi_2\|_{C^{2}}\\
            &\quad+\big((1+\|\varphi_1\|_{C^1})\|\varphi_1\|_{C^2}+(1+\|\varphi_2\|_{C^1})\|\varphi_2\|_{C^2}\big)\|\varphi_1-\varphi_2\|_{C^{1}}\\
            &\quad+\|\sigma_1-\sigma_2\|_{C^1}(1+\|\varphi_1\|_{C^1})+\|\sigma_1-\sigma_2\|_{C^0}(1+\|\varphi_1\|_{C^2})+\|\sigma_2\|_{C^1}\|\varphi_1-\varphi_2\|_{C^1}.
        \end{aligned}
    \end{equation*}
Then, the proof is concluded by condition \eqref{bound_cond}.

\end{proof}

\begin{prop}\label{nonlinear_est}
    Let $T\in[0,1]$, $t\in(0,T]$, $\gamma\in(0,1)$, $\varepsilon\in \lclose{ 0,\min\{1-\gamma,\gamma\}}$, and $\varphi\in X(T)$, $\sigma(t)\in C^{2}(\mathbb{S}^1)$ for $t\in(0,T]$. Assume that
    \begin{equation}\label{AssumpBound}
    \|\varphi\|_{X(T)}\leq M_1, \abs{\varphi}_*\geq m_1>0.
    \end{equation}
    Then, it holds that for any $\delta\in(0,\gamma)$
    \begin{equation*}
        \begin{aligned}
            t^\frac{\varepsilon}{3}\|\mathcal{N}(\varphi,\sigma)(t)\|_{C^{1,\gamma+\varepsilon}}&\leq C\Big(t^{-\frac{3-\paren{\gamma-\delta}}{3}}+t^{-\frac{1}{3}}\|\sigma\|_{C^{0}}+t^{-\frac{1-\paren{\gamma-\delta}}{3}}\|\sigma\|_{C^{1}}+t^\frac{\varepsilon}{3}\|\sigma\|_{C^{1,\gamma+\varepsilon}}+\|\sigma\|_{C^{2}}\Big)
        \end{aligned}
    \end{equation*}
    where $C$ depends on $M_1, m_1$ and $\delta$.
\end{prop}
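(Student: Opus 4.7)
The plan is to exploit the splitting $\mathcal{N}=\mathcal{N}_1+\mathcal{N}_2+\mathcal{N}_3$ of \eqref{Nonlinear_split} and estimate each piece in $C^{1,\gamma+\varepsilon}$ separately, using the tools from Section \ref{sec2}. For $\mathcal{N}_1$, which is a Hilbert transform applied to a polynomial expression in $\partial_s\varphi$ and $\sigma$, the boundedness of $\mathcal{H}$ on H\"older spaces combined with the product rule reduces matters to controlling $\|\varphi\|_{C^{2,\gamma+\varepsilon}}$ and $\|\sigma\|_{C^{1,\gamma+\varepsilon}}$. For $\mathcal{N}_2$ I exploit the gain-of-regularity afforded by the commutator estimate \eqref{comm2} of Lemma \ref{CommutatorLem}; applied with $k=0$ it controls $\|[\mathcal{H},\bm{n}]F_n\|_{C^{1,\gamma+\varepsilon}}$ by $\|\bm{n}\|_{C^{2}}\|F_n\|_{C^0}$, trading one derivative on $F_n$ for one derivative on the smooth factor $\bm{n}$. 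For $\mathcal{N}_3$ I invoke the remainder estimate \eqref{Rest1} of Proposition \ref{RestimateProp} with $k=1$, which similarly shifts regularity from the force to the geometric factor $\bm{X}$.

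In each of the three pieces the bounds reduce to (i) H\"older norms of $F_n$ and $F_\tau=\partial_s\sigma$, which are provided by Proposition \ref{force_est}, and (ii) intermediate H\"older norms of $\varphi$, $\bm{n}$ and $\bm{\tau}$, which are then converted into $t$-weighted quantities through the time-weighted interpolation \eqref{interplationbddeq03}--\eqref{interplationbddeq04}, namely $\|\varphi(t)\|_{C^{1+\gamma+\alpha}}\leq Ct^{-\alpha/3}\|\varphi\|_{X(T)}$ for $\alpha\in[0,3]$, with the lossy version picking up an extra $t^{-\delta/3}$ whenever $\gamma+\alpha\in\mathbb{Z}$. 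Contributions involving $\sigma$ are left on the right-hand side since they already match terms in the target bound.

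The main obstacle will be the bookkeeping of exponents: one must verify, after multiplying by $t^{\varepsilon/3}$, that every resulting $t$-power is dominated by one of the five terms $t^{-(3-(\gamma-\delta))/3}$, $t^{-1/3}\|\sigma\|_{C^0}$, $t^{-(1-(\gamma-\delta))/3}\|\sigma\|_{C^1}$, $t^{\varepsilon/3}\|\sigma\|_{C^{1,\gamma+\varepsilon}}$ or $\|\sigma\|_{C^2}$. The most singular contribution comes from the $\partial_s^3\varphi$ piece of $F_n$ combined with a $C^2$ factor from $\bm{n}$ or $\bm{\tau}$: typically $\|\varphi\|_{C^3}\|\varphi\|_{C^2}\lesssim t^{-(2-\gamma)/3}\,t^{-(1-\gamma)/3}=t^{-(3-2\gamma)/3}$, which is majorized by $t^{-(3-\gamma+\delta)/3}$ since $\gamma+\delta>0$. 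The restriction $\varepsilon\leq\min\{1-\gamma,\gamma\}$ ensures that the intermediate H\"older exponents $\gamma+\varepsilon$ and $1+\varepsilon$ remain within the admissible range $(0,3]$ of Lemma \ref{InterpolationLem}. Finally, the $\|\sigma\|_{C^2}$ contribution on the right-hand side arises from the $\bm{\tau}\cdot\mathcal{R}((\partial_s\sigma)\bm{\tau})$ piece of $\mathcal{N}_3$, where an extra derivative on $\sigma$ is unavoidable in order to apply the $\mathcal{R}$-estimate, and similarly from the commutator $[\mathcal{H},\bm{\tau}]\partial_s\sigma$ in $\mathcal{N}_2$.
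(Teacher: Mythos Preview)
Your proposal is correct and follows essentially the same route as the paper: split $\mathcal{N}=\mathcal{N}_1+\mathcal{N}_2+\mathcal{N}_3$, bound each piece in $C^{1,\gamma+\varepsilon}$ via the H\"older boundedness of $\mathcal{H}$, the commutator Lemma~\ref{CommutatorLem}, and the remainder Proposition~\ref{RestimateProp} respectively, feed in the force bounds of Proposition~\ref{force_est}, and then convert the resulting $\varphi$-norms into $t$-powers with \eqref{interplationbddeq03}--\eqref{interplationbddeq04}. The only variation is that for $\mathcal{N}_2$ the paper invokes \eqref{comm1} with $k=1$ (producing both $\|F_n\|_{C^0}$ and $\|F_n\|_{C^1}$ terms), whereas you invoke the sharper \eqref{comm2} with $k=0$ (needing only $\|F_n\|_{C^0}$); either works, and in fact with your choice the $\|\sigma\|_{C^2}$ term enters only through $\mathcal{N}_3$, so your closing remark that it also comes from the commutator $[\mathcal{H},\bm{\tau}]\partial_s\sigma$ in $\mathcal{N}_2$ is slightly off (and note that the outer factor in $\mathcal{N}_3$ is $\bm{n}$, not $\bm{\tau}$).
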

\begin{proof}
 We consider $\mathcal{N}_1$ in \eqref{Nonlinear_split} first:
\begin{equation*}
    \begin{aligned}
        \|\mathcal{N}_1(\varphi,\sigma)\|_{C^{1,\gamma+\varepsilon}}&\leq C\Big(\|(1+\partial_s\varphi)^3\|_{C^{1,\gamma+\varepsilon}}+\|\sigma(1+\partial_s\varphi)\|_{C^{1,\gamma+\varepsilon}}\Big)\\
        &\leq C\Big(1+\|\partial_s\varphi\|_{C^{1,\gamma+\varepsilon}}\|\partial_s\varphi\|_{C^{0}}^2+\|\sigma\|_{C^{1,\gamma+\varepsilon}}(1+\|\partial_s\varphi\|_{C^{0}})\\
        &\quad +\|\sigma\|_{C^{0}}(1+\|\partial_s\varphi\|_{C^{1,\gamma+\varepsilon}})\Big).
    \end{aligned}
\end{equation*}
and by the interpolation \eqref{interplationbddeq03} and the assumption \eqref{AssumpBound}
\begin{equation}\label{N1bound}
    \begin{aligned}
        t^\frac{\varepsilon}{3}\|\mathcal{N}_1(\varphi,\sigma)\|_{C^{1,\gamma+\varepsilon}}&\leq C\Big(t^{-\frac{1}{3}}\paren{1+\|\sigma\|_{C^{0}}}+t^\frac{\varepsilon}{3}\|\sigma\|_{C^{1,\gamma+\varepsilon}}\Big).
    \end{aligned}
\end{equation}
Next, we estimate $\|\mathcal{N}_2(\varphi,\sigma)\|_{C^{1,\gamma+\varepsilon}}$. Using the notation \eqref{NormalTanForce},
\begin{equation*}
    \begin{aligned}
\|\mathcal{N}_2(\varphi,\sigma)&\|_{C^{1,\gamma+\varepsilon}}
\leq C(1+\|\varphi\|_{C^{1,\gamma+\varepsilon}})\Big(\|[\mathcal{H},\bm{n}]F_n(\varphi,\sigma)\|_{C^{1,\gamma+\varepsilon}}\!+\!\|[\mathcal{H},\bm{\tau}]F_\tau(\varphi,\sigma)\|_{C^{1,\gamma+\varepsilon}}\Big),
    \end{aligned}
\end{equation*}
so Lemma \ref{CommutatorLem} gives that
\begin{equation*}
    \begin{aligned}
\|\mathcal{N}_2(\varphi,\sigma)\|_{C^{1,\gamma+\varepsilon}}
&\leq C(1+\|\varphi\|_{C^{1,\gamma+\varepsilon}})\Big((1+\|\varphi\|_{C^{2}})\|F_n\|_{C^{0}}+(1+\|\varphi\|_{C^{1}})\|F_n\|_{C^{1}}\\
&\quad\hspace{1.5cm}+(1+\|\varphi\|_{C^{2}})\|F_\tau\|_{C^{0}}+(1+\|\varphi\|_{C^{1}})\|F_\tau\|_{C^{1}}\Big).
    \end{aligned}
\end{equation*}
We substitute the estimates from Proposition \ref{force_est} and then use the inequality \eqref{interplationbddeq03} and the assumption \eqref{AssumpBound} to obtain for $\delta\in (0,\gamma)$
\begin{equation*}
    \begin{aligned}
t^\frac{\varepsilon}{3}\|\mathcal{N}_2(\varphi,\sigma)\|_{C^{1,\gamma+\varepsilon}}
&\leq C t^\frac{\varepsilon}{3}(1+\|\varphi\|_{C^{1,\gamma+\varepsilon}})\Big((1+\|\varphi\|_{C^{2}})\big(1+\|\varphi\|_{C^{3}}+\|\sigma\|_{C^0}\big)\\
&\qquad+(1+\|\varphi\|_{C^{1}})\big(1+\|\varphi\|_{C^{4}}+\paren{1+\|\sigma\|_{C^0}}\|\varphi\|_{C^{2}}+\|\sigma\|_{C^1}\big)\\
&\qquad+(1+\|\varphi\|_{C^{2}})\|\sigma\|_{C^{1}}+(1+\|\varphi\|_{C^{1}})\|\sigma\|_{C^{2}}\Big)\\
&\leq C \left( t^{-\frac{1-\paren{\gamma-\delta}}{3}}+t^{-\frac{2-2\paren{\gamma-\delta}}{3}}+t^{-\frac{3-\paren{\gamma-\delta}}{3}}+t^{-\frac{1-\paren{\gamma-\delta}}{3}}\norm{\sigma}_{C^0}\right.\\
&\qquad\left.+ \paren{1+t^{-\frac{1-\paren{\gamma-\delta}}{3}}}\norm{\sigma}_{C^1}+\norm{\sigma}_{C^2}\right) .
    \end{aligned}
\end{equation*}
hence, since $0<t\leq T\leq 1$, we conclude that for any $\delta\in(0,\gamma)$
\begin{equation*}
    \begin{aligned}
t^\frac{\varepsilon}{3}\|\mathcal{N}_2(\varphi,\sigma)\|_{C^{1,\gamma+\varepsilon}}&\leq  C\Big(t^{-\frac{3-\paren{\gamma-\delta}}{3}}+t^{-\frac{1-\paren{\gamma-\delta}}{3}}\|\sigma\|_{C^{1}}+\|\sigma\|_{C^{2}}\Big).
    \end{aligned}
\end{equation*}
Finally, the estimate for $\mathcal{N}_3(\varphi,\sigma)$ is obtained applying Proposition \ref{RestimateProp}, 
hence since $|\varphi|_*>0$, $\mathcal{N}_3(\varphi,\sigma)$ satisfies the same bound as $\mathcal{N}_2(\varphi,\sigma)$.

\end{proof}

Next, we solve for $\sigma=\sigma(\varphi)$ for fixed time and rewrite the nonlinear estimates solely in terms of $\varphi(t)$. 
\begin{prop}\label{nonli_est}
    Let $T\in[0,1]$, $t\in(0,T]$, $\gamma\in(0,1)$, $\varepsilon\in \lclose{ 0,\min\{1-\gamma,\gamma\}}$, $\delta\in(0,\gamma)$, and $|\bm{X}(t)|_*>0$, $\varphi\in X(T)$, and $\sigma(t)=\sigma(\varphi(t))$ the unique solution to \eqref{ten_eq} with $F=F(\varphi)$ given by \eqref{Fdef}. Assume that
    \begin{equation}\label{AssumpBound2}
    \|\varphi\|_{X(T)}\leq M_1, \abs{\varphi}_*\geq m_1>0, d\paren{\varphi}\geq m_2>0 .
    \end{equation}
    Then, it holds that for any $\delta\in(0,\gamma)$
    \begin{equation*}
        \begin{aligned}
            t^\frac{\varepsilon}{3}\|\mathcal{N}(\varphi,\sigma)(t)\|_{C^{1,\gamma+\varepsilon}}&\leq C M_2 t^{\frac{\gamma-\delta}{3}-1}.
        \end{aligned}
    \end{equation*}
    where $C$ depends on $\gamma,\varepsilon, \delta, M_1, m_1$ and $M_2=\max \{\sup_{t\in [0,T]} C_{\varphi\paren{t}},1\}$
\end{prop}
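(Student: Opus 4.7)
The plan is to combine Proposition \ref{nonlinear_est}, which already bounds $t^{\varepsilon/3}\|\mathcal{N}(\varphi,\sigma)(t)\|_{C^{1,\gamma+\varepsilon}}$ in terms of the $C^0$, $C^1$, $C^{1,\gamma+\varepsilon}$ and $C^2$ norms of $\sigma(t)$, with the tension estimates of Proposition \ref{TensionProp2}, which trade $\sigma$ norms for $\varphi$ norms at the cost of a factor $C_\varphi$, and finally with the time-weighted interpolation inequalities of Lemma \ref{InterpolationLem}, which convert higher $\varphi$ norms into negative powers of $t$ times $\|\varphi\|_{X(T)}$. The factor $C_{\varphi(t)}$ is uniformly bounded by $M_2$ along the trajectory because the hypothesis $d(\varphi(t))\geq m_2>0$ rules out circular configurations, the embedding $X(T)\hookrightarrow C([0,T];C^{1,\gamma})$ gives continuity of $t\mapsto\varphi(t)$ in $C^{1,\gamma}$, and Proposition \ref{Tensionbdn} gives continuous dependence $\varphi\mapsto C_\varphi$ in the $C^1$ topology.

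Concretely, I would first apply Proposition \ref{TensionProp2} to write $\|\sigma\|_{C^{1,\gamma+\varepsilon}}\leq CC_\varphi(1+\|\varphi\|_{C^3})$ and $\|\sigma\|_{C^2}\leq CC_\varphi((1+\|\varphi\|_{C^2})(1+\|\varphi\|_{C^3})+\|\varphi\|_{C^4})$, and then apply Lemma \ref{InterpolationLem} in the integer-loss case with a common $\delta\in(0,\gamma)$ to obtain
\begin{equation*}
\|\varphi\|_{C^2}\leq Ct^{-(1-\gamma+\delta)/3}M_1,\quad \|\varphi\|_{C^3}\leq Ct^{-(2-\gamma+\delta)/3}M_1,\quad \|\varphi\|_{C^4}\leq Ct^{-(3-\gamma+\delta)/3}M_1.
\end{equation*}
Plugging these back into the estimate of Proposition \ref{nonlinear_est} and collecting like terms, every summand takes the form $CM_2 M_1^j t^{-a}$ with $a\leq (3-\gamma+\delta)/3$, and since $t\leq T\leq 1$ the slack is harmless, yielding $CM_2 t^{-(3-\gamma+\delta)/3}=CM_2 t^{(\gamma-\delta)/3-1}$ after absorbing the $M_1$-dependence into $C$.

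The main subtlety is the $\|\sigma\|_{C^2}$ contribution through its $\|\varphi\|_{C^4}$ factor. The naive bound $\|\varphi\|_{C^4}\leq \|\varphi\|_{C^{4,\gamma}}\leq t^{-1}M_1$ coming directly from the definition of $\|\cdot\|_{X(T)}$ has exponent $1$, which strictly exceeds the target $(3-\gamma+\delta)/3<1$ (since $\delta<\gamma$) and would thus spoil the bound. The fix is to apply the integer-loss case of Lemma \ref{InterpolationLem} with $\alpha=3-\gamma$, so that $\|\varphi\|_{C^4}\leq \|\varphi\|_{C^{4+\delta}}\leq Ct^{-(3-\gamma+\delta)/3}M_1$ for any $\delta\in(0,\gamma)$, exploiting the Hölder regularity built into $\|\varphi\|_{C^{4,\gamma}}$. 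Once this sharper estimate is in hand, the remaining verification is bookkeeping: comparing exponents such as $1/3$, $(1-\gamma+\delta)/3$, and $(3-2\gamma+2\delta)/3$ with $(3-\gamma+\delta)/3$, with the required inequalities all reducing to $\delta\leq\gamma$.
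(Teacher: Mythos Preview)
Your proposal is correct and follows essentially the same approach as the paper: insert the tension bounds from Proposition~\ref{TensionProp2} into Proposition~\ref{nonlinear_est}, then use the time-weighted interpolation \eqref{interplationbddeq04} to control $\|\varphi\|_{C^2},\|\varphi\|_{C^3},\|\varphi\|_{C^4}$ by negative powers of $t$ times $\|\varphi\|_{X(T)}$, and observe that every resulting exponent is at most $(3-\gamma+\delta)/3$. Your explicit identification of the $\|\varphi\|_{C^4}$ subtlety---that the naive bound $t^{-1}$ from the $X(T)$ norm is too crude and one must use the integer-loss case of Lemma~\ref{InterpolationLem}---is exactly what the paper invokes by citing \eqref{interplationbddeq04}.
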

\begin{proof} We introduce the estimates in Proposition \ref{TensionProp2} into the one in Proposition \ref{nonlinear_est} and use that $0<t\leq 1$:
    \begin{equation*}
        \begin{aligned}
            t^\frac{\varepsilon}{3}\|\mathcal{N}(\varphi,\sigma)(t)\|_{C^{1,\gamma+\varepsilon}}&\leq C M_2\Big(t^{-\frac{3-\paren{\gamma-\delta}}{3}}+t^{-\frac{1}{3}}\|\partial_s^3\varphi\|_{C^{0}}+\|\varphi\|_{C^{4}}+\|\varphi\|_{C^{2}}\|\varphi\|_{C^{3}}\Big),
        \end{aligned}
    \end{equation*}
    Then, by the inequality \eqref{interplationbddeq04} and assumption \eqref{AssumpBound2},
    \begin{equation*}
        t^\frac{\varepsilon}{3}\|\mathcal{N}(\varphi,\sigma)(t)\|_{C^{1,\gamma+\varepsilon}}\leq C\paren{\gamma,\varepsilon, M_1, m_1} M_2 t^{\frac{\gamma-\delta}{3}-1}.
    \end{equation*}
\end{proof}

\subsection{Contraction Estimates}\label{contest}

We will also need contraction estimates on the nonlinear terms.

\begin{prop}\label{Contract_est}
       Let $T\in[0,1]$, $t\in(0,T]$, $\gamma\in(0,1)$, $\varepsilon\in \lclose{ 0,\min\{1-\gamma,\gamma\}}$, and $|\varphi_i(t)|_*>0$, $\varphi_i\in X(T)$, and for $i=1,2$, $\sigma_i(t)=\sigma(\varphi_i(t))$ the unique solution to \eqref{ten_eq} with $F=F(\varphi_i)$ given by \eqref{Fdef}. Assume that for $i=1,2$,
    \begin{equation}\label{AssumpBound3}
    \|\varphi_i\|_{X(T)}\leq M_1, \abs{\varphi_i}_*\geq m_1>0, d\paren{\varphi_i}\geq m_2>0 .
    \end{equation}
    Then, it holds that for any $\delta\in(0,\gamma)$
\begin{equation*}
    t^\frac{\varepsilon}{3}\norm{\mathcal{N}(\varphi_1,\sigma_1)-\mathcal{N}(\varphi_2,\sigma_2)}_{C^{1,\gamma+\varepsilon}}\leq C M_2^3 t^{\frac{\gamma-\delta}{3}-1}\norm{\varphi_1-\varphi_2}_{X(T)},
\end{equation*}
where $C$ depends on $\gamma,\varepsilon,\delta, M_1, m_1$ and $M_2=\max \braces{\sup_{t\in [0,T]} C_{\varphi_1 \paren{t}},\sup_{t\in [0,T]} C_{\varphi_2 \paren{t}},1}$
\end{prop}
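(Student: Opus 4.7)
The plan is to mirror the proof of Proposition \ref{nonli_est}, but applied to the difference $\mathcal{N}(\varphi_1,\sigma_1)-\mathcal{N}(\varphi_2,\sigma_2)$, using the decomposition \eqref{Nonlinear_split} and the telescoping identity $A_1B_1 - A_2B_2 = (A_1-A_2)B_1 + A_2(B_1-B_2)$ on every product. The two nonelementary ingredients that feed into this scheme are the tension-difference bounds from Proposition \ref{TensionProp2} (which already encode the $C_{\varphi_i}$ dependence and in particular carry the cube $M_2^3$ in $C^{2,\alpha}$) and the interpolation estimates \eqref{interplationbddeq03}--\eqref{interplationbddeq04}, which convert bounds in the higher norms $C^{2}, C^{3}, C^4$ into time-weighted bounds controlled by $\|\varphi_1-\varphi_2\|_{X(T)}$.

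For $\mathcal{N}_1$, which only involves $\mathcal{H}$ acting on polynomial expressions in $\partial_s\varphi$ and $\sigma$, the difference is estimated directly by repeatedly splitting off one factor at a time; since $\mathcal{H}$ is bounded on $C^{1,\gamma+\varepsilon}$, the bound is immediate once Propositions \ref{TensionProp2} and \ref{force_est} are substituted. For $\mathcal{N}_2$, the relevant terms are commutators such as $[\mathcal{H},\bm{n}_i]F_n(\varphi_i,\sigma_i)$. Writing
\begin{equation*}
[\mathcal{H},\bm{n}_1]F_n^1 - [\mathcal{H},\bm{n}_2]F_n^2 = [\mathcal{H},\bm{n}_1-\bm{n}_2]F_n^1 + [\mathcal{H},\bm{n}_2](F_n^1-F_n^2),
\end{equation*}
and observing that $\bm{n}_1-\bm{n}_2$ and $\bm{\tau}_1-\bm{\tau}_2$ are controlled in $C^{k,\alpha}$ by $\|\varphi_1-\varphi_2\|_{C^{k,\alpha}}$ via \eqref{Xtheta}, Lemma \ref{CommutatorLem} combined with the force-difference bounds of Proposition \ref{force_est} and the tension-difference bounds of Proposition \ref{TensionProp2} produces the desired estimate.

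The main technical obstacle is $\mathcal{N}_3$, because the operator $\mathcal{R}=\mathcal{R}[\bm{X}]$ itself depends on $\varphi$ through its kernel \eqref{Roperator}. The splitting
\begin{equation*}
\mathcal{R}[\bm{X}_1]Q_1 - \mathcal{R}[\bm{X}_2]Q_2 = (\mathcal{R}[\bm{X}_1]-\mathcal{R}[\bm{X}_2])Q_1 + \mathcal{R}[\bm{X}_2](Q_1-Q_2)
\end{equation*}
reduces the second summand to Proposition \ref{RestimateProp}, but the first requires a Lipschitz-in-$\bm{X}$ bound for $\mathcal{R}$. This is obtained by inspecting the kernel difference, adding and subtracting intermediate terms in quotients such as $\Delta\bm{X}\otimes\Delta\bm{X}/|\Delta\bm{X}|^2$, and bounding each piece with mean value theorem arguments under the arc-chord condition $|\varphi_i|_*\geq m_1>0$, following the approach of \cite[Prop.~2.6]{KuoLaiMoriRodenberg2023}. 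This step is tedious but structurally identical to the proof of Proposition \ref{RestimateProp}.

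Once all pointwise difference estimates are assembled, the argument of Proposition \ref{nonli_est} can be repeated verbatim on the difference: the leading term is controlled by $\|\varphi_1-\varphi_2\|_{C^{4,\gamma+\varepsilon}}$, subleading terms by $\|\varphi_1-\varphi_2\|_{C^{k,\gamma+\varepsilon}}$ for $k=1,2,3$ multiplied by the uniform bound $M_1$, and the tension-difference contributions supply the factor $M_2^3$ coming from the $C^{2,\alpha}$ bound in Proposition \ref{TensionProp2}. Interpolation \eqref{interplationbddeq04} then absorbs each higher-norm factor into $\|\varphi_1-\varphi_2\|_{X(T)}$ with a $t^{-\alpha/3}$ weight, and combining all weights with the prefactor $t^{\varepsilon/3}$ yields the claimed bound $C M_2^3 t^{(\gamma-\delta)/3 - 1}\|\varphi_1-\varphi_2\|_{X(T)}$ after using $t\in(0,1]$ to collapse all negative powers of $t$ to the worst one.
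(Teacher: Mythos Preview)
Your proposal is correct and follows essentially the same route as the paper: telescoping each product in the decomposition \eqref{Nonlinear_split}, applying Lemma \ref{CommutatorLem} and Proposition \ref{RestimateProp} to the commutator and remainder pieces, feeding in the force-difference bounds of Proposition \ref{force_est} and the tension-difference bounds of Proposition \ref{TensionProp2} (whence the factor $M_2^3$), and then converting higher norms to $\|\varphi_1-\varphi_2\|_{X(T)}$ via \eqref{interplationbddeq03}--\eqref{interplationbddeq04}. The one place where you are more explicit than the paper is the Lipschitz-in-$\bm{X}$ estimate for $\mathcal{R}$ needed in $\mathcal{N}_3$; the paper simply asserts that the $\mathcal{N}_3$ contraction estimate is ``obtained in the same way as the one for $\mathcal{N}_2$, using Proposition \ref{RestimateProp} instead of Lemma \ref{CommutatorLem}'', implicitly relying on the same kernel-difference argument already used (without detail) in Proposition \ref{M_cont_prop}.
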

\begin{proof}

We start with $\mc{N}_1$ \eqref{Nonlinear_split}:
\begin{equation*}
\begin{aligned}
\mathcal{N}_1(\varphi_1,\sigma_1)&-\mathcal{N}_1(\varphi_2,\sigma_2)=\frac{1}{4}\mc{H}\Big(\frac{1}{2}(\p_s\varphi_1-\p_s\varphi_2)\big((1+\p_s\varphi_1)^2+(1+\p_s\varphi_2)^2\\
&\quad+(1+\p_s\varphi_1)(1+\p_s\varphi_2)\big)-\frac{1}{4}\mc{H}\Big((\sigma_1-\sigma_2)(1+\p_s\varphi_1)+\sigma_2\paren{\p_s\varphi_1-\p_s\varphi_2}\Big).
\end{aligned}
\end{equation*}
Then,
\begin{equation*}
\begin{aligned}
\|&\mathcal{N}_1(\varphi_1,\sigma_1)-\mathcal{N}_1(\varphi_2,\sigma_2)\|_{C^{1,\gamma+\varepsilon}}\\
&\leq C\Big( \|\p_s\varphi_1-\p_s\varphi_2\|_{C^{1,\gamma+\varepsilon}}\big(1+\|\p_s\varphi_1\|_{C^{0}}^2+\|\p_s\varphi_2\|_{C^{0}}^2\big)\\
&\quad+\|\p_s\varphi_1-\p_s\varphi_2\|_{C^0}\big(1+\|\p_s\varphi_1\|_{C^{1,\gamma+\varepsilon}}+\|\p_s\varphi_2\|_{C^{1,\gamma+\varepsilon}}\big)\big(1+\|\p_s\varphi_1\|_{C^0}+\|\p_s\varphi_2\|_{C^0}\big)\\
&\quad+\|\sigma_1-\sigma_2\|_{C^{1,\gamma+\varepsilon}}(1+\|\p_s\varphi\|_{C^{0}})+\|\sigma_1-\sigma_2\|_{C^{0}}(1+\|\p_s\varphi_1\|_{C^{1,\gamma+\varepsilon}})\\
&\quad+\|\sigma_2\|_{C^{1,\gamma+\varepsilon}}\|\p_s\varphi_1-\p_s\varphi_2\|_{C^{0}}+\|\sigma_2\|_{C^{0}}\|\p_s\varphi_1-\p_s\varphi_2\|_{C^{1,\gamma+\varepsilon}}\Big).
\end{aligned}
\end{equation*}
Using the interpolation inequalities \eqref{interplationbddeq03} and \eqref{AssumpBound3}, it simplifies to

\begin{equation*}
\begin{split}
        &t^{\frac{\varepsilon}{3}}\norm{\mathcal{N}_1(\varphi_1,\sigma_1)-\mathcal{N}_1(\varphi_2,\sigma_2)}_{C^{1,\gamma+\varepsilon}}\\
    \leq&C \left(t^{-\frac{1}{3}}\norm{\varphi_1-\varphi_2}_{X\paren{T}}+t^{\frac{\varepsilon}{3}}\norm{\sigma_1-\sigma_2}_{C^{1,\gamma+\varepsilon}}+t^{-\frac{1}{3}}\norm{\sigma_1-\sigma_2}_{C^{0}}\right.\\
        &\quad\left.+t^{\frac{\varepsilon}{3}}\norm{\sigma_2}_{C^{1,\gamma+\varepsilon}}\norm{\varphi_1-\varphi_2}_{X\paren{T}}+t^{-\frac{1}{3}}\norm{\sigma_2}_{C^{0}}\norm{\varphi_1-\varphi_2}_{X\paren{T}}\right).
\end{split}
\end{equation*}
We now plug in the tension estimates from Proposition \ref{TensionProp2}. First, take Proposition \ref{TensionProp2} and \eqref{interplationbddeq03},
\begin{equation*}
\begin{aligned}
    \|\sigma_i\|_{C^{1,\alpha}}&\leq C M_2\big(1+t^{-\frac{2-\paren{\gamma-\delta}}{3}}\big),
\end{aligned}
\end{equation*}
\begin{equation*}
\begin{aligned}
    \|\sigma_1(\varphi_1)-\sigma_2(\varphi_2)\|_{C^{1,\alpha}}\leq 
     C M_2^2\|\varphi_1-\varphi_2\|_{X(T)}\big(1+t^{-\frac{2-\paren{\gamma-\delta}}{3}}\big),
\end{aligned}
\end{equation*}
    \begin{equation*}
        \begin{aligned}
              \|\sigma_i\|_{C^{2,\alpha}}&\leq C M_2\big(t^{-\frac{1-\paren{\gamma-\delta}}{3}}+t^{-\frac{3-2\paren{\gamma-\delta}}{3}}+t^{-\frac{3-\paren{\gamma-\delta}}{3}}\big),
        \end{aligned}
    \end{equation*}
    \begin{equation*}
\begin{aligned}
    \|\sigma_1&(\varphi_1)-\sigma_2(\varphi_2)\|_{C^{2,\alpha}}\leq C M_2^3\|\varphi_1-\varphi_2\|_{X(T)}\big(1+t^{-\frac{1-\paren{\gamma-\delta}}{3}}+t^{-\frac{2-\paren{\gamma-\delta}}{3}}+t^{-\frac{3-\paren{\gamma-\delta}}{3}}\big).
\end{aligned}
\end{equation*}
Substituting the estimates for the tension with $M_2>1$, we conclude that
\begin{equation*}
\begin{split}
    t^{\frac{\varepsilon}{3}}\norm{\mathcal{N}_1(\varphi_1,\sigma_1)-\mathcal{N}_1(\varphi_2,\sigma_2)}_{C^{1,\gamma+\varepsilon}}
    \leq& C M_2^3\paren{t^{\frac{\varepsilon}{3}}+t^{-\frac{1}{3}}}\paren{1+t^{-\frac{2-\paren{\gamma-\delta}}{3}}}\norm{\varphi_1-\varphi_2}_{X\paren{T}}\\
    \leq&C M_2^3 t^{\frac{\gamma-\delta}{3}-1}\norm{\varphi_1-\varphi_2}_{X\paren{T}}.
\end{split}
\end{equation*}
Next, we proceed with $\mc{N}_2$. We use the notation in \eqref{NormalTanForce},
\begin{equation*}
    \begin{aligned}
        \mathcal{N}_2(\varphi_1,\sigma_1)-\mathcal{N}_2(\varphi_2,\sigma_2)&=-\frac14(\bm{n}_1-\bm{n}_2)\cdot \big([\mc{H},\bm{n}_1]F_{1,n}+[\mc{H},\bm{\tau}_1]F_{1,\tau}\big)\\
        &\quad-\frac14\bm{n}_2\cdot\big([\mc{H},\bm{n}_1-\bm{n}_2]F_{1,n}+[\mc{H},\bm{\tau}_1-\bm{\tau}_2]F_{1,\tau}\big)\\
        &\quad-\frac14\bm{n}_2\cdot[\mc{H},\bm{n}_2]\big(F_{1,n}-F_{2,n}+F_{1,\tau}-F_{2,\tau}\big),
    \end{aligned}
\end{equation*}
and compute bounds for its norm
\begin{equation*}
    \begin{aligned}
        \|\mathcal{N}_2(\varphi_1,\sigma_1)-&\mathcal{N}_2(\varphi_2,\sigma_2)\|_{C^{1,\gamma+\varepsilon}}\\
        &\leq C\|\varphi_1-\varphi_2\|_{C^{1,\gamma+\varepsilon}} \big( \|[\mc{H},\bm{n}_1]F_{1,n}\|_{C^{1,\gamma+\varepsilon}}+\|[\mc{H},\bm{\tau}_1]F_{1,\tau}\|_{C^{1,\gamma+\varepsilon}}\big)\\
        &\quad+C(1+\|\varphi_2\|_{C^{1,\gamma+\varepsilon}})\big(\|[\mc{H},\bm{n}_1-\bm{n}_2]F_{1,n}\|_{C^{1,\gamma+\varepsilon}}+\|[\mc{H},\bm{\tau}_1-\bm{\tau}_2]F_{1,\tau}\|_{C^{1,\gamma+\varepsilon}}\big)\\
        &\quad+C(1+\|\varphi_2\|_{C^{1,\gamma+\varepsilon}})\|[\mc{H},\bm{n}_2]\big(F_{1,n}-F_{2,n}+F_{1,\tau}-F_{2,\tau}\|_{C^{1,\gamma+\varepsilon}}\big).
    \end{aligned}
\end{equation*}
We apply the commutator estimate \eqref{comm1} to obtain
\begin{equation}\label{N2_cont_split}
    \begin{aligned}
        \|\mathcal{N}_2(\varphi_1,\sigma_1)-\mathcal{N}_2(\varphi_2,\sigma_2)\|_{C^{1,\gamma+\varepsilon}}\leq C\big(I_1+I_2+I_3\big),
\end{aligned}
\end{equation}
where
\begin{equation*}
    \begin{aligned}
        I_1&\!=\!\|\varphi_1\!-\!\varphi_2\|_{C^{1,\gamma+\varepsilon}} \Big((1\!+\!\|\varphi_1\|_{C^{2}})\big(\|F_{1,n}\|_{C^{0}}\!+\!\|F_{1,\tau}\|_{C^{0}}\big)\!+\!(1\!+\!\|\varphi_1\|_{C^{1}})\big(\|F_{1,n}\|_{C^{1}}\!+\!\|F_{1,\tau}\|_{C^{1}}\big)\Big),\\
        I_2&\!=\!(1+\|\varphi_2\|_{C^{1,\gamma+\varepsilon}})\!\Big( \|\varphi_1\!-\!\varphi_2\|_{C^{2}}\big(\|F_{1,n}\|_{C^{0}}\!+\!\|F_{1,\tau}\|_{C^{0}}\big)\!+\!\|\varphi_1\!-\!\varphi_2\|_{C^{1}}\big(\|F_{1,n}\|_{C^{1}}\!+\!\|F_{1,\tau}\|_{C^{1}}\big)\!\Big),\\
        I_3&\!=\!(1+\|\varphi_2\|_{C^{1,\gamma+\varepsilon}})\Big( (1+\|\varphi_2\|_{C^{2}})\big(\|F_{1,n}-F_{2,n}\|_{C^{0}}+\|F_{1,\tau}-F_{2,\tau}\|_{C^{0}}\big)\\
        &\hspace{2cm}\qquad+(1+\|\varphi_2)\|_{C^{1}}\big(\|F_{1,n}-F_{2,n}\|_{C^{1}}+\|F_{1,\tau}-F_{2,\tau}\|_{C^{1}}\big)\Big).
    \end{aligned}
\end{equation*}
Next, we substitute the bounds for $F_{i,n}, F_{i,\tau}$, $i=1,2$, from Proposition \ref{force_est}, interpolate the norms and use condition \eqref{AssumpBound3} to find that
\begin{equation*}
    \begin{aligned}
        I_1&\leq C\|\varphi_1-\varphi_2\|_{C^{1,\gamma+\varepsilon}} \\
        &\quad\times\Big( (1+\|\varphi_1\|_{C^{2}})\big(1+\|\varphi_1\|_{C^{3}}+\|\sigma_1\|_{C^{1}}\big)+\|\sigma_1\|_{C^2}+1+\|\varphi_1\|_{C^{4}}+\paren{1+\|\sigma_1\|_{C^{0}}}\|\varphi_1\|_{C^{2}}\Big),
    \end{aligned}
\end{equation*}
\begin{equation*}
    \begin{aligned}
        I_2&\leq C(1+\|\varphi_2\|_{C^{1,\gamma+\varepsilon}})\Big( \|\varphi_1\!-\!\varphi_2\|_{C^{2}}\big(1+\|\varphi_1\|_{C^{3}}+\|\sigma_1\|_{C^{1}}\big)\\
        &\hspace{2cm}+\|\varphi_1-\varphi_2\|_{C^1}\big(\|\sigma_1\|_{C^2}+1+\|\varphi_1\|_{C^{4}}+\paren{1+\|\sigma_1\|_{C^{0}}}\|\varphi_1\|_{C^{2}}\big)\Big),
    \end{aligned}
\end{equation*}
\begin{equation*}
    \begin{aligned}
        I_3&\leq C(1+\|\varphi_2\|_{C^{1,\gamma+\varepsilon}})\Big(  (1+\|\varphi_2\|_{C^{2}})\big(\|\varphi_1-\varphi_2\|_{C^{3}}+\|\varphi_1-\varphi_2\|_{C^1}+\|\sigma_1-\sigma_2\|_{C^0}\\
            &\qquad\quad+\|\sigma_2\|_{C^0}\|\varphi_1-\varphi_2\|_{C^1}+\|\sigma_1-\sigma_2\|_{C^{1}}\big)\\
        &\hspace{2cm}+\|\varphi_1\!-\!\varphi_2\|_{C^{4}}\!+\!\big(1\!+\!\|\sigma\|_{C^0}\big)\|\varphi_1\!-\!\varphi_2\|_{C^{2}}\\
            &\qquad+\big(1+\|\varphi_1\|_{C^2}+\|\varphi_2\|_{C^2}\big)\|\varphi_1-\varphi_2\|_{C^{1}}+\|\sigma_1-\sigma_2\|_{C^1}\\
            &\qquad+\|\sigma_1-\sigma_2\|_{C^0}(1+\|\varphi_1\|_{C^2})+\|\sigma_2\|_{C^1}\|\varphi_1-\varphi_2\|_{C^1}+\|\sigma_1-\sigma_2\|_{C^{2}}\Big).
    \end{aligned}
\end{equation*}
Proceeding with $I_1$ by the above tension estimates and \eqref{interplationbddeq04}, we obtain
\begin{equation*}
\begin{aligned}
    t^{\frac{\varepsilon}{3}}I_1&\leq  C M_2^3\|\varphi_1-\varphi_2\|_{X(T)}\Big(t^{-\frac{1-\paren{\gamma-\delta}}{3}}+t^{-\frac{3-2\paren{\gamma-\delta}}{3}}+t^{-\frac{3-\paren{\gamma-\delta}}{3}}\Big).
\end{aligned}
\end{equation*}
Similarly, the estimate for $I_2$ becomes
\begin{equation*}
\begin{aligned}
    t^{\frac{\varepsilon}{3}}I_2&\leq  C M_2^3\|\varphi_1-\varphi_2\|_{X(T)}\Big(t^{-\frac{1-\paren{\gamma-\delta}}{3}}+t^{-\frac{3-2\paren{\gamma-\delta}}{3}}+t^{-\frac{3-\paren{\gamma-\delta}}{3}}\Big).
\end{aligned}
\end{equation*}
Finally, the bound for $I_3$ is given by
\begin{equation*}
\begin{aligned}
    t^{\frac{\varepsilon}{3}}I_3&\leq  C M_2^3\|\varphi_1-\varphi_2\|_{X(T)}\Big(1+t^{-\frac{1-\paren{\gamma-\delta}}{3}}+t^{-\frac{2-\paren{\gamma-\delta}}{3}}+t^{-\frac{3-2\paren{\gamma-\delta}}{3}}+t^{-\frac{3-\paren{\gamma-\delta}}{3}}\Big).
\end{aligned}
\end{equation*}
Joining the bounds for $I_1$, $I_2$, and $I_3$ back into \eqref{N2_cont_split}, we conclude
\begin{equation*}
\begin{aligned}
t^{\frac{\varepsilon}{3}}\|&\mathcal{N}_2(\varphi_1,\sigma_1)-\mathcal{N}_2(\varphi_2,\sigma_2)\|_{C^{1,\gamma+\varepsilon}}\leq C M_2^3 t^{-\frac{3-\paren{\gamma-\delta}}{3}}\|\varphi_1-\varphi_2\|_{X(T)}.
\end{aligned}
\end{equation*}
Finally, the contraction estimate for $\mc{N}_3$ is obtained in the same way as the one for $\mc{N}_2$, using Proposition \ref{RestimateProp} instead of Lemma \ref{CommutatorLem} (given that $|\bm{X}_i|_*>m_1$, $i=1,2$).

\end{proof}

\subsection{Fixed Point}\label{fixedpoint}

We proceed to show the existence of a unique solution to our problem \eqref{theta_eqn}-\eqref{tension_eqn}. We do it by showing the existence of a unique fixed point in $X(T)$ \eqref{space_fixed} for the map 
\begin{equation}\label{fixed-point}
    \begin{aligned}
        \Theta(\varphi,t;\varphi_0)=e^{-\frac14|\nabla|^3t}\varphi_0+\int_0^te^{-\frac14|\nabla|^3(t-\tau)}\mathcal{N}(\varphi,\sigma(\varphi))(\tau)d\tau. 
    \end{aligned}
\end{equation}
We recall the definition
\begin{equation*}
    \begin{aligned}
        d(\varphi)=\min_{a\in\mathbb{S}^1}\|\varphi(s)-a\|_{C^0}.
    \end{aligned}
\end{equation*}
Without loss of generality, we assume that the initial interface is not a circle, since there are no dynamics in such case. Hence
\begin{equation*}
    \begin{aligned}
        d(\varphi_0)>0.
    \end{aligned}
\end{equation*}
We will use the Banach fixed point theorem for $\Theta$ on a suitable closed subset of $X(T)$ \eqref{space_fixed}, given by
\begin{equation*}
    \begin{aligned}
        O_T&\eqdef \{\varphi\in X(T):\|\varphi-e^{-\frac14|\nabla|^3t}\varphi_0\|_{X(T)}\leq \frac12\min\{|\varphi_0|_*,d(\varphi_0)\}\}\subset X(T),\\
        \mc{D}_T&\eqdef\set{\vartheta \in C^{1,\gamma}\paren{\mbs}}{\vartheta(s)=\varphi\paren{s,t} \mbox{ for some } \varphi\in O_T \mbox{ and } t\in [0,T]}.
    \end{aligned}
\end{equation*}
Notice that for $\varphi\in O_T$, $t\in[0,T]$, it holds that,
\begin{equation}\label{aux_arc}
    \begin{aligned}
        \|\varphi\|_{X(T)}&\leq \|e^{-\frac14|\nabla|^3t}\varphi_0\|_{X(T)}+\frac12|\varphi_0|_*\\
        &\leq C\|\varphi_0\|_{C^{1,\gamma}}+\frac12|\varphi_0|_*,\\
        \norm{\varphi\paren{t}-\varphi_0}_{C^{1,\gamma}}&\leq \|\varphi(t)-e^{-\frac14|\nabla|^3t}\varphi_0\|_{C^{1,\gamma}}+\|e^{-\frac14|\nabla|^3t}\varphi_0-\varphi_0\|_{C^{1,\gamma}},\\
        |\varphi(t)|_*&\geq |\varphi_0|_*-\|\varphi(t)-\varphi_0\|_{C^{1,\gamma}}\\
        &\geq |\varphi_0|_*-\|\varphi(t)-e^{-\frac14|\nabla|^3t}\varphi_0\|_{C^{1,\gamma}}-\|e^{-\frac14|\nabla|^3t}\varphi_0-\varphi_0\|_{C^{1,\gamma}},\\
        d(\varphi(t))&\geq \frac12 d(\varphi_0)-\|e^{-\frac14|\nabla|^3t}\varphi_0-\varphi_0\|_{C^{0}}.
    \end{aligned}
\end{equation}

\begin{thm}\label{theo_fixed}
     Let $\varphi_0\in h^{1,\gamma}(\mathbb{S}^1)$, $\gamma\in(0,1)$, $|\varphi_0|_*>0$. There exists $T>0$ such that $\Theta(\varphi,t;\varphi_0)$ 
forms a contraction on $O_T$. 
\end{thm}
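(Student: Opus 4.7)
The plan is to apply Banach's fixed point theorem to $\Theta$ on the closed subset $O_T\subset X(T)$. The first step is to check that every $\varphi\in O_T$ satisfies the structural hypotheses needed to invoke the tension and nonlinear estimates of Sections \ref{sec3} and \ref{nonlin}. The a priori inequalities \eqref{aux_arc} control $\|\varphi\|_{X(T)}$ in terms of $\|\varphi_0\|_{C^{1,\gamma}}$ and $|\varphi_0|_*$, while the assumption $\varphi_0\in h^{1,\gamma}(\mathbb{S}^1)$ combined with Lemma \ref{SemigroupLem} gives $\|e^{-\frac14|\nabla|^3 t}\varphi_0-\varphi_0\|_{C^{1,\gamma}}\to 0$ as $t\to 0^+$. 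Hence, for $T$ small enough (depending only on $\|\varphi_0\|_{C^{1,\gamma}}$, $|\varphi_0|_*$ and $d(\varphi_0)$), the remaining inequalities of \eqref{aux_arc} yield $|\varphi(t)|_*\geq \tfrac12|\varphi_0|_*$ and $d(\varphi(t))\geq \tfrac14 d(\varphi_0)$ on $[0,T]$, furnishing the constants $M_1,m_1,m_2$ in \eqref{AssumpBound2}--\eqref{AssumpBound3}. The continuity of the tension constant $C_\varphi$ on the open set $\mathcal{O}$ (Proposition \ref{Tensionbdn}), applied to a neighborhood of $\varphi_0$ containing $\mathcal{D}_T$, then provides a uniform bound $M_2<\infty$ for all $\varphi\in O_T$.

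For the self-mapping step, split the $X(T)$-norm of the Duhamel integral into its $C([0,T];C^{1,\gamma})$ piece and its time-weighted $C^{4,\gamma}$ piece. For the former, the semigroup acts as a bounded map on $C^{1,\gamma}$, and the $\varepsilon=0$ endpoint of Proposition \ref{nonli_est} gives $\|\mathcal{N}(\tau)\|_{C^{1,\gamma}}\leq CM_2\tau^{(\gamma-\delta)/3-1}$, whose $\tau$-integral converges and yields a contribution of size $CM_2 t^{(\gamma-\delta)/3}$. For the latter, use the smoothing of Lemma \ref{SemigroupLem} with $\alpha=1+\gamma+\varepsilon$, $\beta=4+\gamma$, giving a factor $(t-\tau)^{(\varepsilon-3)/3}$, and pair it with $\|\mathcal{N}(\tau)\|_{C^{1,\gamma+\varepsilon}}\leq CM_2\tau^{(\gamma-\delta-\varepsilon)/3-1}$. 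Provided $0<\varepsilon<\gamma-\delta$, both exponents exceed $-1$, and the Beta-function identity
\begin{equation*}
\int_0^t (t-\tau)^{a-1}\tau^{b-1}\,d\tau = t^{a+b-1}B(a,b),\qquad a=\tfrac{\varepsilon}{3},\quad b=\tfrac{\gamma-\delta-\varepsilon}{3},
\end{equation*}
produces $CM_2 t^{(\gamma-\delta)/3-1}$; multiplying by the weight $t$ gives $CM_2 t^{(\gamma-\delta)/3}$ again. Choosing $T$ so that $CM_2 T^{(\gamma-\delta)/3}\leq \tfrac12\min\{|\varphi_0|_*,d(\varphi_0)\}$ closes the self-mapping step.

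The contraction estimate follows from the identical Duhamel/semigroup/Beta-function scheme, now fed with Proposition \ref{Contract_est} instead of Proposition \ref{nonli_est}. Because the contraction bound has the same time-singularity structure, with an extra factor of $\|\varphi_1-\varphi_2\|_{X(T)}$ and one more power of $M_2$, the computation yields $\|\Theta(\varphi_1)-\Theta(\varphi_2)\|_{X(T)}\leq CM_2^3 T^{(\gamma-\delta)/3}\|\varphi_1-\varphi_2\|_{X(T)}$. A further shrinking of $T$ makes the prefactor smaller than $\tfrac12$, and the Banach fixed point theorem produces the unique $\varphi\in O_T$ with $\Theta(\varphi)=\varphi$.

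The main obstacle is arranging the simultaneous convergence of the two singular integrals in the $C^{4,\gamma}$ estimate: the nonlinearity is singular at $\tau=0$ of order $\tau^{(\gamma-\delta-\varepsilon)/3-1}$ from the time-weighted regularity built into $X(T)$, while the parabolic smoothing into $C^{4,\gamma}$ is singular at $\tau=t$ of order $(t-\tau)^{(\varepsilon-3)/3}$. Compatibility requires the narrow window $0<\varepsilon<\gamma-\delta$, which is exactly why the auxiliary parameter $\varepsilon$ is built into Propositions \ref{nonli_est} and \ref{Contract_est}. A related delicate point is the use of the completion $h^{1,\gamma}$ for the initial datum, which is precisely what guarantees the right-continuity of $e^{-\frac14|\nabla|^3 t}\varphi_0$ at $t=0$ in the $C^{1,\gamma}$ topology and thereby keeps $\mathcal{D}_T$ inside the open set $\mathcal{O}$ where the tension determination problem is uniquely solvable.
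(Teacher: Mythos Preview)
Your proposal is correct and follows essentially the same route as the paper: choose $T$ small so that \eqref{aux_arc} yields uniform lower bounds on $|\varphi(t)|_*$ and $d(\varphi(t))$, obtain a uniform tension constant via Proposition \ref{Tensionbdn}, and then combine the semigroup smoothing of Lemma \ref{SemigroupLem} with Propositions \ref{nonli_est} and \ref{Contract_est} to estimate the Duhamel integral. The paper fixes $\delta=\gamma/2$ and replaces your Beta-function identity by the equivalent split-at-$t/2$ bound, but the time exponents and the resulting smallness condition on $T$ are the same.

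One point deserves a slightly sharper justification: your sentence about the uniform bound $M_2<\infty$ appeals to continuity of $C_\varphi$ on a ``neighborhood of $\varphi_0$ containing $\mathcal{D}_T$,'' but continuity alone on an open set does not give boundedness. The paper closes this by noting that $\mathcal{D}_T$ is bounded in $C^{1,\gamma}(\mathbb{S}^1)$ and hence precompact in $C^1(\mathbb{S}^1)$, with the lower bound on $d(\varphi(t))$ excluding the circle from the $C^1$-closure; the supremum of the continuous function $C_\varphi$ over this compact set is then finite. Your identification of the $0<\varepsilon<\gamma-\delta$ window and of the role of $h^{1,\gamma}$ for right-continuity at $t=0$ matches the paper exactly.
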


\begin{proof}

We first show that, fixed $\varphi_0\in h^{1,\gamma}(\mathbb{S}^1)$, $\Theta$ maps $O_{T_1}$ to itself for some $T_1>0$.
We choose $T_1\leq 1$ small enough so that
\begin{equation}\label{exptheta0}
    \|e^{-\frac14|\nabla|^3t}\varphi_0-\varphi_0\|_{C^{1,\gamma}}\leq \frac14 \min\{|\varphi_0|_*,d(\varphi_0)\}.
\end{equation}
Therefore, for any $\varphi\in O_{T_1}$ and $t\leq T_1$,
\begin{equation}\label{cond_aux}
    \begin{aligned}
        \|\varphi\|_{X(T)}\leq C\|\varphi_0\|_{C^{1,\gamma}}+\frac12|\varphi_0|_*, \quad|\varphi(t)|_*\geq \frac{1}{4}|\varphi_0|_*, \quad
        d(\varphi(t))\geq \frac{1}{4}d(\varphi_0),         
    \end{aligned}
\end{equation}
and we may set 
\begin{align*}
    M_1=C\|\varphi_0\|_{C^{1,\gamma}}+\frac12|\varphi_0|_*, m_1=\frac{1}{4}|\varphi_0|_*.
\end{align*}
By Proposition \ref{Tensionbdn}, $C_\varphi$ is continuous in $\mc{D}_{T_1}$ for $C^{1}(\mathbb{S}^1)$ norm. 
By \eqref{cond_aux} and the fact that $C^{1,\gamma}(\mathbb{S}^1)$ is compactly embedded in $C^{1}(\mathbb{S}^1)$, $\mc{D}_{T_1}$ is precompact in $C^{1}(\mathbb{S}^1)$ and the unit circle $\paren{\braces{d(\varphi)=0}}$ is not the limit point of $\mc{D}_{T_1}$, so
\begin{equation*}
\begin{aligned}
    M_3:=  \max \Big\{\sup_{\varphi\in\mc{D}_{T_1}} C_\varphi,1\Big\}<\infty,
\end{aligned}
\end{equation*}
and for the following estimates, we may replace $M_2$ in Proposition \ref{nonli_est} and \ref{Contract_est} with $M_3$.
Then, for any $0<T_2\leq T_1$,
\begin{equation*}
    \begin{aligned}
        \|\Theta(\varphi,t;\varphi_0)-e^{-\frac14|\nabla|^3t}\varphi_0\|_{X(T_2)}&\leq\Big|\Big|\int_0^te^{-\frac14|\nabla|^3(t-\tau)}\mathcal{N}(\varphi,\sigma(\varphi))(\tau)d\tau\Big|\Big|_{X(T_2)}. 
    \end{aligned}
\end{equation*}
Lemma \ref{SemigroupLem} and Proposition \ref{nonli_est} with $\delta=\frac{\gamma}{2}$ give that
\begin{equation*}
    \begin{aligned}
        &\norm{\int_0^t e^{-\frac14|\nabla|^3 (t-\tau)}\mathcal{N}(\varphi,\sigma)(\tau)d\tau}_{C^{4,\gamma}}
        \leq C \int_0^t\frac{1}{(t-\tau)^{1-\frac{\varepsilon}{3}}}\norm{ \mathcal{N}(\varphi,\sigma)(\tau)}_{C^{1,\gamma+\varepsilon}}d\tau\\
        &\leq C \int_0^t\frac{1}{(t-\tau)^{1-\frac{\varepsilon}{3}}\tau^{1+\frac{\varepsilon}{3}-\frac{\gamma}{6}}}d\tau
        \leq C M_3\int_0^t\paren{\frac{1}{t^{1-\frac{\varepsilon}{3}}\tau^{1+\frac{\varepsilon}{3}-\frac{\gamma}{6}}}+\frac{1}{(t-\tau)^{1-\frac{\varepsilon}{3}}t^{1+\frac{\varepsilon}{3}-\frac{\gamma}{6}}}}d\tau\\
        &\leq C M_3 t^{\frac{\gamma}{6}-1},
    \end{aligned}
\end{equation*}
and also
\begin{equation*}
    \begin{aligned}
     \Big|\Big|\int_0^te^{-\frac14|\nabla|^3 (t-\tau)}\mathcal{N}(\varphi,\sigma)(\tau)d\tau\Big|\Big|_{C^{1,\gamma}}&\leq C M_3\int_0^t \|\mathcal{N}(\varphi,\sigma)\|_{C^{1,\gamma}}(\tau)d\tau
     \leq C M_3\int_0^t \tau^{\frac{\gamma}{3}-1}d\tau
     \leq C M_3 t^{\frac{\gamma}{6}},
    \end{aligned}
\end{equation*}
thus
\begin{equation*}
    \begin{aligned}
     \|\Theta(\varphi,t;\varphi_0)-e^{-\frac14|\nabla|^3t}\varphi_0\|_{X(T_2)}\leq C\Big|\Big|\int_0^te^{-\frac14|\nabla|^3 (t-\tau)}\mathcal{N}(\varphi,\sigma)(\tau)d\tau\Big|\Big|_{X(T_2)}&\leq C M_3 T_2^{\frac{\gamma}{6}}.
    \end{aligned}
\end{equation*}
Where $C$ depends on  $|\varphi_0|_*$, $\|\varphi_0\|_{C^{1,\gamma}}$.
Therefore, we can take $T_2>0$ small enough so that
\begin{equation*}
    \|\varphi-e^{-\frac14|\nabla|^3t}\varphi_0\|_{X(T)}\leq \frac12\min\{|\varphi_0|_*,d(\varphi_0)\}.
\end{equation*}
Next, we show that $\Theta$ forms a contraction on $O_{T_3}$ for some $T_3>0$.
Let $\varphi_1, \varphi_2\in O_{T_3}$. Then, 
\begin{equation*}
    \begin{aligned}
        \|\Theta(\varphi_1,t;\varphi_{1,0})-&\Theta(\varphi_2,t;\varphi_{2,0})\|_{X(T_3)}\\
        &\leq\Big|\Big|\int_0^te^{-\frac14|\nabla|^3(t-\tau)}\big(\mathcal{N}(\varphi_1,\sigma(\varphi_1))(\tau)-\mathcal{N}(\varphi_2,\sigma(\varphi_2))(\tau)\big)d\tau\Big|\Big|_{X(T_3)}.
    \end{aligned}
\end{equation*}
We use Lemma \ref{SemigroupLem} and Proposition \ref{Contract_est} with $\delta=\frac{\gamma}{2}$ to obtain
\begin{equation*}
    \begin{aligned}
        \|\Theta(\varphi_1,t;\varphi_{1,0})&-\Theta(\varphi_2,t;\varphi_{2,0})\|_{X(T_3)}\leq C M_2^3 T_3^{\frac{\gamma}{3}}\|\varphi_1-\varphi_2\|_{X(T_3)},
    \end{aligned}
\end{equation*}
where $C$ depends on $\|\varphi_{1,0}\|_{C^1}$,  $\|\varphi_{2,0}\|_{C^1}$, $|\varphi_{1,0}|_*$, $|\varphi_{2,0}|_*$.
Hence, we can choose $T_3>0$ small enough so that
\begin{equation*}
    \begin{aligned}
        \|\Theta(\varphi_1,t;\varphi_{1,0})&-\Theta(\varphi_2,t;\varphi_{2,0})\|_{X(T_3)}\leq \frac12\|\varphi_1-\varphi_2\|_{X(T_3)},
    \end{aligned}
\end{equation*}
and the result follows by taking $T=\min\{T_2,T_3\}$.

\end{proof}

\begin{remark}\label{length_of_T}
Note that
\begin{equation*}
\norm{e^{-\frac14|\nabla|^3t}\varphi_0-\varphi_0}_{C^{1,\gamma/2}}\leq C_0t^{\gamma/6}\norm{e^{-\frac14|\nabla|^3t}\varphi_0-\varphi_0}_{C^{1,\gamma}}
\leq C_1t^{\gamma/6}\norm{\varphi_0}_{C^{1,\gamma}},
\end{equation*}
where the constants $C_0$ and $C_1$ only depend on $\gamma$.
This implies that, by performing the above fixed point argument by replacing the H\"older exponent $\gamma$ with $\gamma/2$,
we can take $T_1$ in the above proof
to depend only on $\norm{\varphi_0}_{C^{1,\gamma}}$
(see equation \eqref{exptheta0}). 
This implies that the existence time $T$ can be taken to depend only on $d(\varphi_0)$, $\abs{\varphi_0}_*$ and the smoothness of $\varphi_0$. 
\end{remark}

\subsection{Parabolic smoothing}\label{smoothing}

Given $h_0\in h^{1,\gamma}(\mathbb{S}^1)$ and the regularity of functions in $X(T)$, defined in \eqref{space_fixed}, it is easy to see that the fixed point constructed in Theorem \ref{theo_fixed} is in fact a strong solution in the sense of Definition \ref{def_sol}. We moreover show that this solution is smooth.

\begin{prop}[Smoothing]\label{smoothing}
Let $\varphi$ be the solution to the Inextensible Interface problem in $[0,T]$ with initial data $\varphi_0\in h^{1,\gamma}(\mathbb{S}^1)$, $\gamma\in(0,1)$. Then,  for any $n\in\mathbb{N}$ and $\alpha\in(0,1)$, it holds that $\varphi\in C((0,T];C^{n,\alpha}(\mathbb{S}^1))$.
\end{prop}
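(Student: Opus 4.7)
The plan is to bootstrap regularity in three-derivative increments using Duhamel's formula, restarting the equation at positive times. Fix any $t_0 \in (0, T)$. Since $\varphi \in X(T)$ as constructed in Theorem~\ref{theo_fixed}, we have $\varphi(t_0) \in C^{4,\gamma}(\mathbb{S}^1)$, and by the continuity of $|\varphi(t)|_*$ and $d(\varphi(t))$ in $t$ together with $|\varphi_0|_* > 0$ and $d(\varphi_0) > 0$, we may ensure $|\varphi(t_0)|_* > 0$ and $d(\varphi(t_0)) > 0$ by choosing $t_0$ sufficiently small. Treating $\varphi(t_0)$ as initial data, the equation on $[t_0, T]$ takes the form
\begin{equation*}
\varphi(t) = e^{-\frac{1}{4}|\nabla|^3 (t-t_0)}\varphi(t_0) + \int_{t_0}^t e^{-\frac{1}{4}|\nabla|^3 (t-\tau)}\mathcal{N}(\varphi,\sigma(\varphi))(\tau)\, d\tau.
\end{equation*}

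For each integer $m \geq 1$, I would introduce the higher-regularity analogue of $X(T)$ from \eqref{space_fixed}, namely
\begin{equation*}
X^{(m)}(t_0, T) = C([t_0, T]; C^{1+3m,\gamma}) \cap C((t_0, T]; C^{4+3m,\gamma}) \cap L^\infty(t_0, T; (t-t_0)\, C^{4+3m,\gamma}),
\end{equation*}
and run the Banach fixed point argument of Theorem~\ref{theo_fixed} on $X^{(m)}(t_0, T_m)$ with $T_m > t_0$ small, taking as initial data the $\varphi(t_0) \in C^{1+3m,\gamma}$ produced by the $(m-1)$-th step (the base case $m=1$ using the known $C^{4,\gamma}$ regularity). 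The required inputs are the $C^{1+3m,\gamma}$ analogues of Propositions~\ref{nonli_est} and~\ref{Contract_est}. The tension piece is already delivered in arbitrary regularity by \eqref{sigma_kest}. The nonlinearity $\mathcal{N} = \mathcal{N}_1 + \mathcal{N}_2 + \mathcal{N}_3$ contains at most three derivatives of $\varphi$ (in $\mathcal{N}_1$, through the Hilbert transform), while $\mathcal{N}_2$, $\mathcal{N}_3$ involve only commutators and the remainder operator $\mathcal{R}$, which, by Lemma~\ref{CommutatorLem} and Proposition~\ref{RestimateProp}, are strictly lower order than $|\nabla|^3$. Combined with the three-derivative semigroup gain from Lemma~\ref{SemigroupLem}, the fixed point closes at the higher level. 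Uniqueness from Theorem~\ref{theo_fixed} then identifies the new fixed point with $\varphi$ on $[t_0, T_m]$, and a continuation argument using the already-known $C^{1+3(m-1),\gamma}$ bound of $\varphi$ on the full interval $[t_0, T]$ propagates the $C^{1+3m,\gamma}$ regularity to all of $[t_0, T]$. Letting $t_0 \to 0^+$ yields $\varphi \in C((0, T]; C^{4+3m,\gamma})$.

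The principal obstacle is the bookkeeping needed to extend the nonlinear and contraction estimates of Sections~\ref{nonlin} and~\ref{contest} to $C^{1+3m,\gamma}$ for arbitrary $m$: tracking the dependence of constants on $\|\varphi(t_0)\|_{C^{1+3(m-1),\gamma}}$, $|\varphi(t_0)|_*$ and $d(\varphi(t_0))$, and verifying the appropriate interpolation exponents in the analogues of Lemma~\ref{InterpolationLem} and estimate~\eqref{interplationbddeq03}. However, the structure is uniform across levels, since the operators $[\mathcal{H}, \cdot]$, $\mathcal{R}$ and the resolvent of the tension equation all behave the same in any Hölder class. Finally, given any $n \in \mathbb{N}$ and $\alpha \in (0,1)$, choosing $m$ large enough so that $1+3m > n+\alpha$ and using the embedding $C^{1+3m,\gamma} \hookrightarrow C^{n,\alpha}$ gives $\varphi \in C((0,T]; C^{n,\alpha}(\mathbb{S}^1))$, completing the proof.
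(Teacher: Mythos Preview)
Your proposal is a valid alternative route, but it differs in method from the paper's proof. The paper does \emph{not} rerun the fixed point argument in higher-regularity spaces, nor does it jump three derivatives at a time. Instead, after a preliminary step upgrading the H\"older exponent from $\gamma$ to arbitrary $\alpha\in(0,1)$ (by iterating Proposition~\ref{nonli_est} with $\beta\in(\gamma,\tfrac{3}{2}\gamma)$), it bootstraps \emph{one} derivative at a time via a direct a priori bound on the Duhamel integral: assuming $\varphi(t)\in C^{4+k,\alpha}$ for all $\alpha$, it shows $\|\mathcal{N}(\varphi,\sigma)(\tau)\|_{C^{2+k,\beta}}\leq C(\tau)$ outright, using only the already-known regularity of $\varphi$ and $\sigma$ together with the crucial \emph{two-derivative} commutator gain \eqref{comm2} and its analogue \eqref{Rest2} for $\mathcal{R}$. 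The semigroup then yields $\varphi(t)\in C^{5+k,\alpha}$ with no contraction estimate and no continuation argument. Your approach instead requires reproving the analogues of Propositions~\ref{nonli_est} and~\ref{Contract_est} at every level $m$, verifying the little-H\"older condition $\varphi(t_0)\in h^{1+3m,\gamma}$ needed to restart the semigroup (which you do not address, though it follows once the H\"older exponent has been upgraded), and then patching local intervals along $[t_0,T]$. This works but is heavier machinery for the same conclusion; the paper's direct estimate is cleaner precisely because once the solution is in hand, a contraction is no longer needed---only a bound.

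One inaccuracy worth flagging: the three derivatives of $\varphi$ in $\mathcal{N}$ do not appear in $\mathcal{N}_1$ (which contains only $(\partial_s\varphi)^3$ and $\sigma(1+\partial_s\varphi)$); they sit inside $F_n=\partial_s^3\varphi+\cdots$ in $\mathcal{N}_2$ and $\mathcal{N}_3$, where the commutator and $\mathcal{R}$ smooth them. This does not affect your overall scheme, but it is the reason the paper can get away with estimating $\mathcal{N}$ at the level $C^{2+k,\beta}$ using only $\|\varphi\|_{C^{4+k}}$.
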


\begin{proof}
Since $\varphi\in X(T)$, we have that for $t\in(0,T]$, $\|\varphi(t)\|_{C^{4,\gamma}}\leq C(t)$. 
Given $t_0\in(0,T]$, we write
\begin{equation}\label{duhamel_smo}
    \begin{aligned}
        \varphi(t)=e^{-\frac14|\nabla|^3(t-t_0)}\varphi(t_0)+\int_{t_0}^te^{-\frac14|\nabla|^3(t-\tau)}\mathcal{N}(\varphi,\sigma(\varphi))(\tau)d\tau. 
    \end{aligned}
\end{equation}
We first show that $\varphi(t)\in C^{4,\alpha}(\mathbb{S}^1)$ for any $\alpha\in(0,1)$. 
Applying Lemma \ref{SemigroupLem} in \eqref{duhamel_smo} with $\varepsilon\in(0,1-\alpha)$, we have
\begin{equation*}
    \begin{aligned}
        \|\varphi(t)\|_{C^{4,\alpha}}&\leq C\frac{\|\varphi(t_0)\|_{C^{1,\alpha}}}{t-t_0}+\int_{t_0}^t\frac{C}{(t-\tau)^{1-\frac{\varepsilon}{3}}}\|\mathcal{N}(\varphi,\sigma(\varphi))(\tau)\|_{C^{1,\alpha+\varepsilon}}d\tau.
    \end{aligned}
\end{equation*}
If $\gamma\in[2/3,1)$, then Proposition \ref{nonli_est} with $\delta=\frac{\gamma}{2}$ gives that for any $\beta \in(\gamma,1)$,
\begin{equation*}
    \|\mc{N}(\varphi,\sigma)(t)\|_{C^{1,\beta}}\leq C M_2^2 t^{\frac{\frac{3}{2}\gamma-\beta}{3}-1},
\end{equation*}
thus we conclude that $\varphi(t)\in C^{4,\alpha}(\mathbb{S}^1)$ for any $t>t_0$. Otherwise, when $\gamma\in(0,2/3)$, the same reasoning with $\beta\in(\gamma,\frac{3}{2}\gamma)$ gives that $\varphi(t)\in C^{4,\alpha}(\mathbb{S}^1)$ for $\alpha\in(0,\frac{3}{2}\gamma)$, $t>t_0$. Doing the same procedure $k$ times, until $\paren{\frac{3}{2}}^k\gamma\geq1$, gives the result for the whole range $\alpha\in(0,1)$. Since $t_0>0$ was arbitrary, we conclude that $\varphi(t)\in C^{4,\alpha}(\mathbb{S}^1)$ for $t>0$, $\alpha\in(0,1)$.

Next, we show that $\varphi(t)\in C^{4+k,\alpha}(\mathbb{S}^1)$ for $t>0$, $\alpha\in(0,1)$, $0\leq k\in\mathbb{Z}$. We proceed by induction: assume that $\varphi(t)\in C^{4+k,\alpha}(\mathbb{S}^1)$ for $t>0$, $\alpha\in(0,1)$. In particular, from estimate \eqref{sigma_kest}, we then have that $\sigma(t)\in C^{2+k,\alpha}(\mathbb{S}^1)$. Arguing as above, we have that
\begin{equation*}
    \begin{aligned}
        \|\varphi(t)\|_{C^{5+k,\alpha}}&\leq C\frac{\|\varphi(t_0)\|_{C^{2+k,\alpha}}}{t-t_0}+\int_{t_0}^t\frac{C}{(t-\tau)^{1-\frac{\varepsilon}{3}}}\|\mathcal{N}(\varphi,\sigma(\varphi))(\tau)\|_{C^{2+k,\alpha+\varepsilon}}d\tau.
    \end{aligned}
\end{equation*}
However, an analogous bound to the previous one on $\mc{N}(\varphi,\sigma(\varphi))$  is no longer valid, as it would involve the $C^{5+k}(\mathbb{S}^1)$ norm of $\varphi(t)$. We recall the splitting \eqref{Nonlinear_split} and perform $C^{2+k,\beta}(\mathbb{S}^1)$ estimates, $\beta=\alpha+\varepsilon\in(0,1)$. A straightforward bound for the first term follows: 
\begin{equation*}
\begin{aligned}
\|\mathcal{N}_1(\varphi,\sigma)(t)\|_{C^{2+k,\beta}}&\leq\Big(\|\varphi(t)\|_{C^{3+k,\beta}}(1+\|\varphi(t)\|_{C^{1}})^2+\|\varphi(t)\|_{C^{3+k,\beta}}\|\sigma(t)\|_{C^{0}}\\
&\qquad+(1+\|\varphi(t)\|_{C^{1}})\|\sigma(t)\|_{C^{2+k,\beta}}\Big)\\
&\leq C(t).
\end{aligned}
\end{equation*}
We then use Lemma \ref{CommutatorLem}, and in particular estimate \eqref{comm2},  to obtain that
\begin{equation*}
\begin{aligned}
\|\mathcal{N}_2(\varphi,\sigma)(t)\|_{C^{2+k,\beta}}&\leq C(t)\Big(1+\|[\mc{H},\bm{n}]\p_s^3\varphi\|_{C^{2+k,\beta}}+\|[\mc{H},\bm{n}]\p_s\sigma\|_{C^{2+k,\beta}}\Big)\\
&\leq C(t)\Big(1+\|\varphi(t)\|_{C^{4+k}}+\|\sigma(t)\|_{C^{2+k}}\Big)\\
&\leq C(t).
\end{aligned}
\end{equation*}
The bound for $\mc{N}_3 $ follows exactly as the one for $\mc{N}_2$ but using instead the estimate \eqref{Rest2} from Proposition \ref{RestimateProp}. Therefore, we have shown that $\varphi(t)\in C^{5+k,\alpha}(\mathbb{S}^1)$ for any $\alpha\in(0,1)$ and $t>t_0$. Since $t_0>0$ was arbitrary, we conclude the induction argument.
    
\end{proof}
The regularity in time then follows from the regularity in space and repeated use of \eqref{duhamel_smo}, concluding that $\varphi\in C^\infty((0,T]\times\mathbb{S}^1)$.

\section{Numerical Simulations}\label{sec5}

We recall that equations \eqref{theta_eqn}, \eqref{tension_eqn} and \eqref{NormalTanForce} can be written as
\begin{equation}\label{eqNum}
\begin{split}
\p_t \varphi
=&\frac{1}{4}\mc{H}F_n(\varphi, \sigma)
-\frac{1}{4}\bm{n}(s)\cdot[\mc{H},\bm{n}]F_n(\varphi, \sigma)-\frac{1}{4}\bm{n}(s)\cdot[\mc{H},\bm{\tau}]F_\tau(\sigma)\\
&+\bm{n}(s)\cdot\PPD{s}{\mc{R}^{\paren{-1}}}\paren{F_n(\varphi, \sigma) \bm{n}(s)+F_\tau(\sigma)\bm{\tau}(s)}\\
=&\frac{1}{4}\mc{H}\p_s^3\varphi+\mc{N}\paren{\varphi,\sigma},
\end{split}
\end{equation}
where
\begin{equation*}
\begin{aligned}
    F_n(\varphi, \sigma)=\p_s^3\varphi +\frac{1}{2}(\p_s\varphi+1)^3-\sigma\paren{\p_s\varphi+1},\quad F_\tau(\sigma)=\p_s\sigma,
\end{aligned}
\end{equation*}
and
\begin{align*}
\begin{split}
        &\quad\bm{\tau}\cdot\paren{-\frac{1}{4}\mc{H}+\PPD{s}{\mc{R}^{\paren{-1}}}}\paren{\p_s\paren{\sigma \bm{\tau}}}\\
        =&-\bm{\tau}\cdot\paren{-\frac{1}{4}\mc{H}+\PPD{s}{\mc{R}^{\paren{-1}}}}\paren{\p_s{\paren{\p_{s}^{2}{\varphi}\bm{n}-\frac{1}{2}\paren{\p_s \varphi+1}^2\bm{\tau}}}}.
\end{split}
\end{align*}   
In addition, the equations for $\bm{X}$ are
\begin{align*}
    \bm{X}\paren{s,t}           =&\bm{X}_c\paren{t}+\int \bm{\tau}\paren{s,t}ds,\\
    \PPD{t}{\bm{X}_c\paren{t}}   =&\frac{1}{2\pi}\int_\mbs \mc{R}^{\paren{-1}}\paren{F_n(\varphi, \sigma) \bm{n}(s)+F_\tau(\sigma)\bm{\tau}(s)}ds.
\end{align*}
Now, we can give a description of the numerical scheme.
\subsection{Spatial Discretizations}
We first discretize the spatial variable $s\in \mbs$ with $N$ points s.t. $s_j=jh$, where $j=0,1,2,\cdots, N-1$ and $h=2\pi/N$, and a function $f\paren{s}$ is discretized as $f_j=f\paren{s_j}$.
Then, we define the discrete Fourier Transform $\mc{F}_h$ from $f_j$ into $\hat{f}_k$ by
\begin{equation*}
    \hat{f}_k=\paren{\mc{F}_h[f_j]}_k:=\sum_{j=0}^{N-1}f_je^{\inum ks_j},\quad k=-\frac{N}{2}+1,\cdots, -1, 0, 1,\cdots, \frac{N}{2},
\end{equation*}
and the inverse discrete Fourier Transform $\mc{F}_h^{-1}$ by
\begin{equation*}
    f_j=\paren{\mc{F}_h^{-1}[\hat{f}_k]}_j:=\sum_{k=-\frac{N}{2}+1}^{\frac{N}{2}}\hat{f}_k e^{-\inum ks_j}.
\end{equation*}
In these equations, the main operators are derivatives $\p_s^n$, Hilbert transform $\mc{H}$, and the singular integral operator $\mc{R}$.
Since the Fourier multipliers of $\p_s^k$ and $\mc{H}$ on $\mbs$ are $\paren{\inum k}^n$ and  $-\inum~\sgn \paren{k}$, where sgn is the sign function,
we may set the discrete derivative $\mc{S}_{h}^n$ and the discrete Hilbert transform $\mc{H}_h$ by
\begin{align*}
    \widehat{\paren{\mc{S}_{h}^n f}}_k          =&\paren{\inum k}^n\hat{f}_k, \mbox{ for } k=-\frac{N}{2}+1,\cdots, -1, 0, 1,\cdots, \frac{N}{2}-1,\\
    \widehat{\paren{\mc{S}_{h}^n f}}_\frac{N}{2}=&0,
\end{align*}
and
\begin{align*}
    \widehat{\paren{\mc{H}_h f}}_k          =&-\inum~\sgn \paren{k}\hat{f}_k, \mbox{ for } k=-\frac{N}{2}+1,\cdots, -1, 0, 1,\cdots, \frac{N}{2}-1,\\
    \widehat{\paren{\mc{H}_h f}}_\frac{N}{2}=&0,
\end{align*}
where we zero out the Fourier coefficient at $k=\frac{N}{2}$ for the asymmetry of the discrete Fourier Transform.
Also, when we apply the high derivative $\mc{S}_{h}^n$, the coefficients with the high frequencies $\hat{f}_k$ are amplified too much, so we set a filtering operator 
$\mc{P}$ from \cite{David-Michael-Keyang:Convergence-of-the-boundary-integral-method-for-interfacial-Stokes-flow} defined as 
\begin{align*}
    \widehat{\paren{\mc{P} f}}_k=\rho\paren{kh}\widehat{f}^k,
\end{align*}
where $\rho\in C^r\paren{[-\pi,\pi]}$ is a cutting function with $r>2$ satisfying
\begin{align*}
    \begin{split}
        \rho\paren{-x}=&\rho\paren{-x}\geq 0,\\
        \rho\paren{\pm \pi}=&\rho'\paren{\pm \pi}=0,\\
        \rho\paren{x}=&1 \mbox{ for } \abs{x}\leq\mu \pi, 0<\mu<1,
    \end{split}
\end{align*}
and we define the filtered derivative $\mc{D}_h^n=\paren{\mc{P}\mc{S}_{h}^n}$.
Moreover, we define the discrete anti-derivative $\mc{S}_{h}^{-1}$ by
\begin{align*}
    \widehat{\paren{\mc{S}_{h}^{-1} f}}_k=
    \begin{cases}
        \frac{1}{\inum k}\hat{f}_k, &\mbox{ for } k\neq 0, \frac{N}{2},\\
        &\\
        0, &\mbox{ for } k=0, \frac{N}{2}.
    \end{cases}
\end{align*}
We now approximate $\mc{R}^{\paren{-1}}$ by the trapezoidal rule.
Since as $\eta\rightarrow s$, $K^{\paren{-1}}(s,\eta)$ exist, we may define
\begin{align*}
    K_{kl}=&-\log \paren{\frac{\abs{\bm{X}_k-\bm{X}_l}}{2\abs{\sin \paren{ \frac{s_k-s_l}{2}}}}}I+\frac{\paren{\bm{X}_k-\bm{X}_l}\otimes\paren{\bm{X}_k-\bm{X}_l}}{\abs{\bm{X}_k-\bm{X}_l}^2},\\
    K_{kk}=&-\log\abs{\paren{\mc{S}_{h} \bm{X}}_k}I+\frac{\paren{\mc{S}_{h} \bm{X}}_k\otimes\paren{\mc{S}_{h} \bm{X}}_k}{\abs{\paren{\mc{S}_{h} \bm{X}}_k}^2}.
\end{align*}
Then, $\mathcal{R}^{\paren{-1}}\bm{Q}$ is approximated by $\mc{R}^{\paren{-1}}_h$,
\begin{align*}
    \paren{\mathcal{R}^{\paren{-1}}_h\bm{Q}}_k=-\frac{1}{4\pi}\mc{S}_{h}\paren{\sum_{l=0}^{N-1}K_{kl}\bm{Q}_l h}.
\end{align*}
Next, for the discretization of the commutator $[\mc{H},f]g$, one may directly use $\mc{H}_h \paren{fg}-f\mc{H}_h g$, but we choose another way to discretize it. 
First, we rewrite the commutator $[\mc{H},f]g$ as
\begin{align*}
    [\mc{H},f]g=\frac{1}{2\pi}\int_\mbs \cos \paren{\frac{s-\eta}{2}}\frac{f\paren{\eta}-f\paren{s}}{\sin \paren{\frac{s-\eta}{2}}} g\paren{\eta}d\eta.
\end{align*}
Since as $\eta \rightarrow s$,
\begin{align*}
    \frac{f\paren{\eta}-f\paren{s}}{\sin \paren{\frac{s-\eta}{2}}}\rightarrow -2 \PPD{s}{f}\paren{s},
\end{align*}
$[\mc{H},f]g$ is approximated by $[\mc{H}_h,f] g$,
\begin{align*}
    \paren{[\mc{H}_h,f] g}_k=-\frac{1}{2\pi}\paren{\sum_{l=0}^{N-1}H_{kl} g_l h},
\end{align*}
where
\begin{align*}
    H_{kl}=\cot \paren{\frac{s_k-s_l}{2}}\paren{f_l-f_k},\quad
    H_{kk}=-2 \paren{\mc{S}_{h} f}_k.
\end{align*}
Therefore, the semi-discretization is 
\begin{equation*}
\begin{split}
\p_t \varphi
=&\quad\frac{1}{4}\mc{H}_h \mc{S}_{h}^{3}\varphi+\frac{1}{4}\mc{H}_h\paren{\frac{1}{2}(\mc{P}_{h}\varphi+1)^3-\sigma\paren{\mc{P}_{h}\varphi+1}}\\
&-\frac{1}{4}\bm{n}\cdot[\mc{H}_h,\bm{n}]F_n-\frac{1}{4}\bm{n}\cdot[\mc{H}_h,\bm{\tau}]F_\tau
+\bm{n}\cdot\mc{S}_{h}\paren{\mc{R}^{\paren{-1}}_h\paren{F_n \bm{n}+F_\tau\bm{\tau}}}\\
:=&\quad\frac{1}{4}\mc{H}_h \mc{S}_{h}^{3}\varphi+\mc{N}_h\paren{\varphi,\sigma},
\end{split}
\end{equation*}
where
\begin{equation*}
\begin{aligned}
    F_n=\mc{P}_{h}^{3}\varphi +\frac{1}{2}(\mc{P}_{h}\varphi+1)^3-\sigma\paren{\mc{P}_{h}\varphi+1},\quad F_\tau=\mc{P}_{h}\sigma.
\end{aligned}
\end{equation*}
We solve $\sigma$ by
\begin{align*}
    \mc{L}_h\sigma=b_h\varphi,
\end{align*}
where
\begin{align*}
    \mc{L}_h\sigma=&\bm{\tau}\cdot\paren{-\frac{1}{4}\mc{H}_h+\mc{S}_{h}\mc{R}^{\paren{-1}}_h}\paren{\mc{S}_{h}\paren{\sigma \bm{\tau}}},\\
    b_h\varphi=&-\bm{\tau}\cdot\paren{-\frac{1}{4}\mc{H}_h+\mc{S}_{h}\mc{R}^{\paren{-1}}_h}\paren{\mc{S}_{h}{\paren{\paren{\mc{P}_{h}^{2}{\varphi}}\bm{n}-\frac{1}{2}\paren{\mc{P}_h \varphi-1}^2\bm{\tau}}}}.
\end{align*}
Then, the discrete position $\bm{X}_i$ is
\begin{align*}
    \bm{X}           =&\bm{X}_c+\mc{S}_{h}^{-1}\bm{\tau},\\
    \PPD{t}{\bm{X}_c}   =&\frac{1}{N}\paren{\sum_{l=0}^{N-1}\paren{\mc{R}^{\paren{-1}}_h\paren{F_n \bm{n}+F_\tau\bm{\tau}}}_l}.
\end{align*}
\subsection{Time Discretizations}
Now, we give the description of the time discretization. Set $t_m=m \Delta t$ and denote $f_j^{m}=f_j\paren{t_m}$, where $\Delta t$ is the time step.
We use an implicit-explicit Runge-Kutta type scheme, whose accuracy is second order.
The time scheme includes two steps.
The first step is computing $\varphi^{m+\frac{1}{2}}$.
We set the principle part $\frac{1}{4}\mc{H}_h \mc{S}_{h}^{3}\varphi$ in \eqref{eqNum}  as an implicit part $\frac{1}{4}\mc{H}_h \mc{S}_{h}^{3}\varphi^{m+\frac{1}{2}}$ and the nonlinear remaindering part as an explicit part $\mc{N}_h^m\paren{\varphi^m,\sigma^m}$.
The equations for $\varphi^{m+\frac{1}{2}}$ are
\begin{align*}
\begin{split}
    \frac{\varphi^{m+\frac{1}{2}}-\varphi^m}{\frac{1}{2}\Delta t}=&\quad\frac{1}{4}\mc{H}_h \mc{S}_{h}^{3}\varphi^{m+\frac{1}{2}}+\frac{1}{4}\mc{H}_h\paren{\frac{1}{2}(\mc{P}_{h}\varphi^m+1)^3-\sigma\paren{\mc{P}_{h}\varphi^m+1}}\\
&-\frac{1}{4}\bm{n}^m\cdot[\mc{H}_h,\bm{n}^m]F_n^m-\frac{1}{4}\bm{n}^m\cdot[\mc{H}_h,\bm{\tau}^m]F_\tau^m\\
&+\bm{n}^m\cdot\mc{S}_{h}\paren{\mc{R}^{\paren{-1},m}_h\paren{F_n^m \bm{n}^m+F_\tau^m\bm{\tau}^m}},
\end{split}
\end{align*}
and
\begin{equation*}
    \mc{L}_h^m\sigma^m=b_h^m\varphi^m.
\end{equation*}
The next step is computing $\varphi^{m+1}$ by a midterm method.
We set the principle part as $\frac{1}{4}\mc{H}_h \mc{S}_{h}^{3}\paren{\frac{\varphi^{m+1}+\varphi^m}{2}}$ and the remaindering part $\mc{N}_h^{m+1}\paren{\varphi^{m+1},\sigma^{m+1}}$. The equations of the second step are
\begin{align*}
\begin{split}
    \frac{\varphi^{m+1}-\varphi^m}{\Delta t}=
    &\quad\frac{1}{4}\mc{H}_h \mc{S}_{h}^{3}\paren{\frac{\varphi^{m+1}+\varphi^m}{2}}\\
    &+\frac{1}{4}\mc{H}_h\paren{\frac{1}{2}(\mc{P}_{h}\varphi^{m+\frac{1}{2}}+1)^3-\sigma\paren{\mc{P}_{h}\varphi^{m+\frac{1}{2}}+1}}\\
    &-\frac{1}{4}\bm{n}^{m+\frac{1}{2}}\cdot[\mc{H}_h,\bm{n}^m]F_n^{m+\frac{1}{2}}-\frac{1}{4}\bm{n}^{m+\frac{1}{2}}\cdot[\mc{H}_h,\bm{\tau}^{m+\frac{1}{2}}]F_\tau^{m+\frac{1}{2}}\\
    &+\bm{n}^{m+\frac{1}{2}}\cdot\mc{S}_{h}\paren{\mc{R}^{\paren{-1},m+\frac{1}{2}}_h\paren{F_n^{m+\frac{1}{2}} \bm{n}^{m+\frac{1}{2}}+F_\tau^{m+\frac{1}{2}}\bm{\tau}^{m+\frac{1}{2}}}},
\end{split}
\end{align*}
and
\begin{equation*}
    \mc{L}_h^{m+\frac{1}{2}}\sigma^{m+\frac{1}{2}}=b_h^{m+\frac{1}{2}}\varphi^{m+\frac{1}{2}}.
\end{equation*}
Then, we compute the position $\bm{X}^{m+1}$ with the following equations:
\begin{equation*}
\begin{split}
    \bm{X}^{m+1}           =&\bm{X}_c^{m+1}+\mc{S}_{h}^{-1}\bm{\tau}^{m+1},\\
    \frac{\bm{X}_c^{m+1}-\bm{X}_c^m}{\Delta t}  =&\frac{1}{N}\paren{\sum_{l=0}^{N-1}\paren{\mc{R}^{\paren{-1},m+\frac{1}{2}}_h\paren{F_n^{m+\frac{1}{2}} \bm{n}^{m+\frac{1}{2}}+F_\tau^{m+\frac{1}{2}}\bm{\tau}^{m+\frac{1}{2}}}}_l}.    
\end{split}
\end{equation*}

\subsection{Numerical Results}
To observe the behavior of the solutions computationally, we first take a star-shape initial condition with seven lobes:
\begin{equation*}
 \widetilde{\bm{X}}_0(\phi(s))=
    \begin{bmatrix}
    \paren{1+\frac{1}{4}\cos\paren{7\phi}}\cos \phi+ \frac{1}{8}\cos\paren{2\phi}\\
    \paren{1+\frac{1}{4}\cos\paren{7\phi}}\sin \phi+ \frac{1}{8}\sin\paren{2\phi}
    \end{bmatrix}.
\end{equation*}
where $\phi$ is the polar angular coordinate and is itself parametrized by the arclength $s$.
Compute the length $L$ of the closed curve, then define the initial condition as
\begin{equation*}
    \bm{X}_0\paren{s}:=\frac{2\pi}{L}\widetilde{\bm{X}}_0\paren{\phi\paren{\frac{L}{2\pi}s}}
\end{equation*}
The motion of $\Gamma_t$ can be found in Figure \ref{fig:ex01_1}.
We see how the contour evolves to a two-lobe steady state.
\begin{figure}[htbp]
    \centering
    \includegraphics[width=1\textwidth]{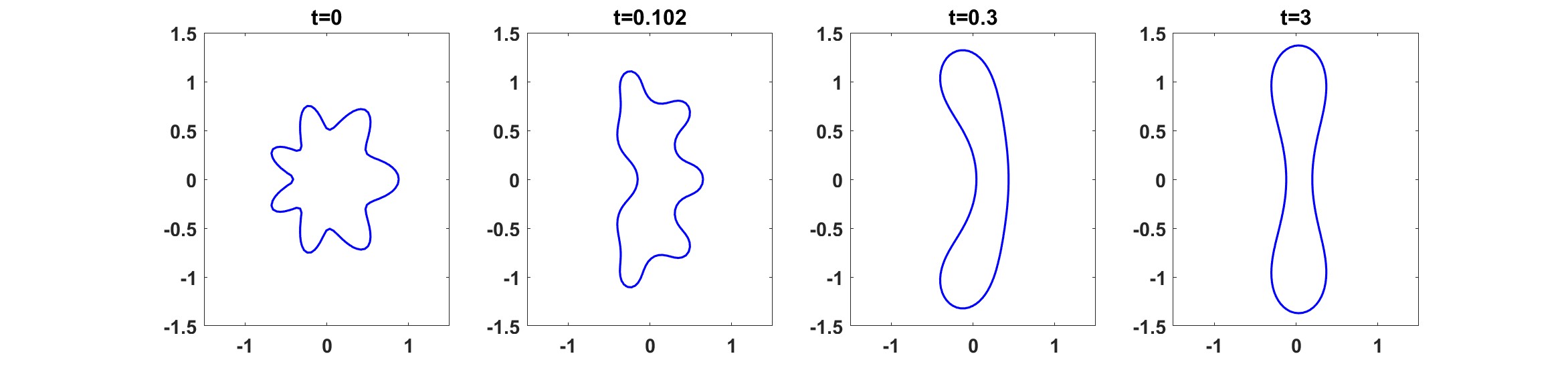}
    \caption{The motion of Example 1 with $N=128, \Delta t=$2e-4 }
    \label{fig:ex01_1}
\end{figure}

Next, we study convergence of the above case at $T=0.1$.
Denote the numerical results of $\varphi$ with the grid points $N$ at $s=kh$ as $\varphi_{N,k}$, and define relative errors $e_{N,i}$ as $k=0,1, \cdots, N-1$
\begin{equation*}
    e_{N,i}:=\varphi_{N,k}-\varphi_{2N,2k},
\end{equation*}
and their $\infty$-norm $\norm{\cdot}_\infty$ and $2$-norm $\norm{\cdot}_2$ as
\begin{equation*}
\begin{aligned}
    \norm{e_{N,i}}_\infty:=\sup_{i=0,\cdots,N-1}\abs{e_{N,i}},\\
    \norm{e_{N,i}}_2:=\sqrt{\frac{2\pi}{N}\sum_{i=0}^{N-1}e_{N,i}^2}.
\end{aligned}
\end{equation*}
We set three conditions:
(1) Fix the mesh size of $\Gamma_t$ to be $N=128$ and choose the time steps $\Delta t=2\times 10^{-5}, 10^{-5},5\times 10^{-6},2.5\times 10^{-6}$.
(2) Fix $\Delta t=5\times 10^{-6}$ and choose $N=64,128,256,512$.
(3) choose $N=64,128,256,512$ and set $\Delta t=k/N$ where $k=128\times10^{-5}$.
The results are shown in Figure \ref{fig:ex01 conv}.
We obtain second order convergence in all cases.

\begin{figure}[htbp]
    \centering
    \includegraphics[width=1\textwidth]{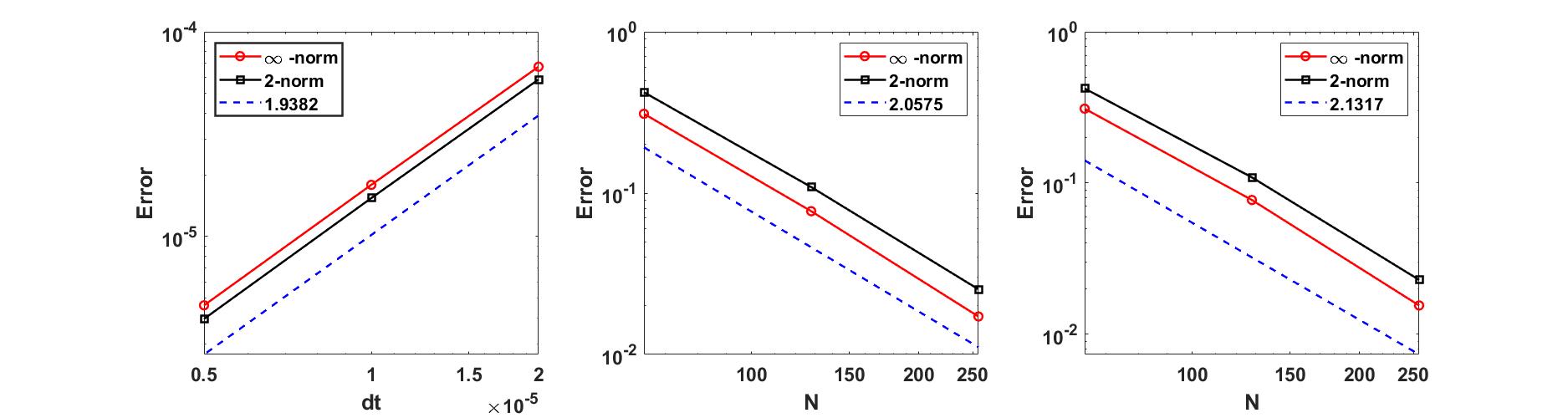}
    \caption{   These figures show the relative errors of Example 1 at $T=0.1$.
                Left:   we fix $N=128$ and decrease $\Delta t$.
                Middle: we fix $\Delta t=$5e-6 and increase $N$
                Right:  we increase $N$ and set $\Delta t=k/N$. 
                }
    \label{fig:ex01 conv}
\end{figure}

Next, we perform numerical simulations to observe asymptotic behavior.
In \cite{VeerapaneniRajBirosPurohit09}, equilibrium states with different number of lobes were computed.
We take the following near epicycloid curves with different lobes as initial conditions to observe their evolution:
\begin{equation*}
 \widetilde{\bm{X}}_0(\phi)=
    \begin{bmatrix}
    \cos \phi- \frac{1}{n+2}\cos\paren{(n+1)\phi}\\
    \sin \phi- \frac{1}{n+2}\sin\paren{(n+1)\phi}
    \end{bmatrix}.
\end{equation*}
In Figure $\ref{fig:ex02_1}$, we plot simulations for initial data $n=3$ for two different values of $N=128, 192$.
In both cases, $\Gamma_t$ converges quickly to a three-lobed shape, but eventually evolved into a two-lobed shape.
The face that one obtains different orientations for different values of $N$ indicates the strong sensitivity to initial data near
the three-lobe steady state, indicating that the three-lobe steady state is a saddle. This is also suggested by the fact that, 
when $N=192$, which is divisible by $3$, $\Gamma_t$ maintains a three-lobed shape for a longer time, until $t=7.5$.

Next, we observe three other cases, $n=4,5,6$.

The results are in Figure \ref{fig:ex03_1}.
In the first and the second rows, corresponding to the cases $n=4,5$, $\Gamma_t$ converges a two-lobed shape directly.
On the other hand, in the second row, $\Gamma_t$ deforms from a six-lobed shape into a three-lobed shape, and finally converges to a two-lobed shape.
This suggests that $m$-lobe steady states are generally saddles, and that they are connected to other steady states via heteroclinic orbits. 
In particular, if an $m_1$- and $m_2$-lobed steady state with $m_1>m_2\geq 3$ are connected by a heteroclinic orbit, it is expected that $m_1$ is divisible by $m_2$. 
\begin{figure}[htbp]
    \centering
    \includegraphics[width=1\textwidth]{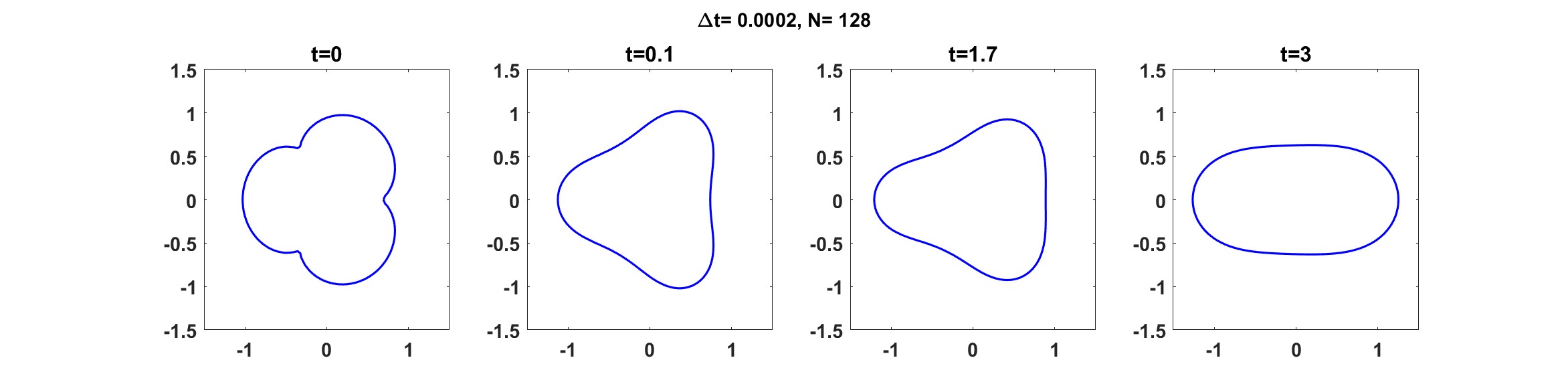}
    \includegraphics[width=1\textwidth]{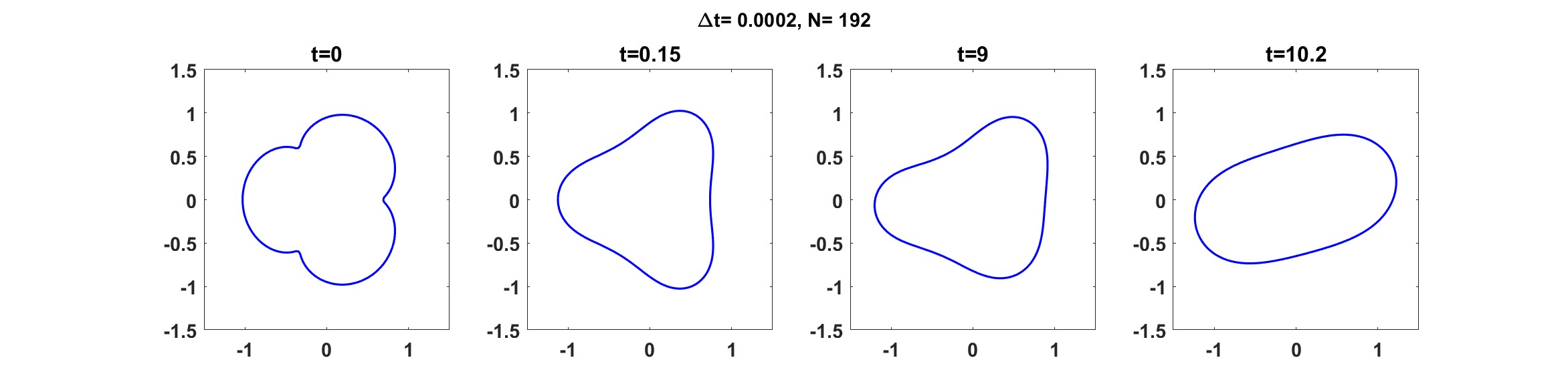}
    \caption{The motion of the near epicycloid initial condition with $3$ lobes.}
    \label{fig:ex02_1}
\end{figure}

\begin{figure}[htbp]
    \centering
    \includegraphics[width=1\textwidth]{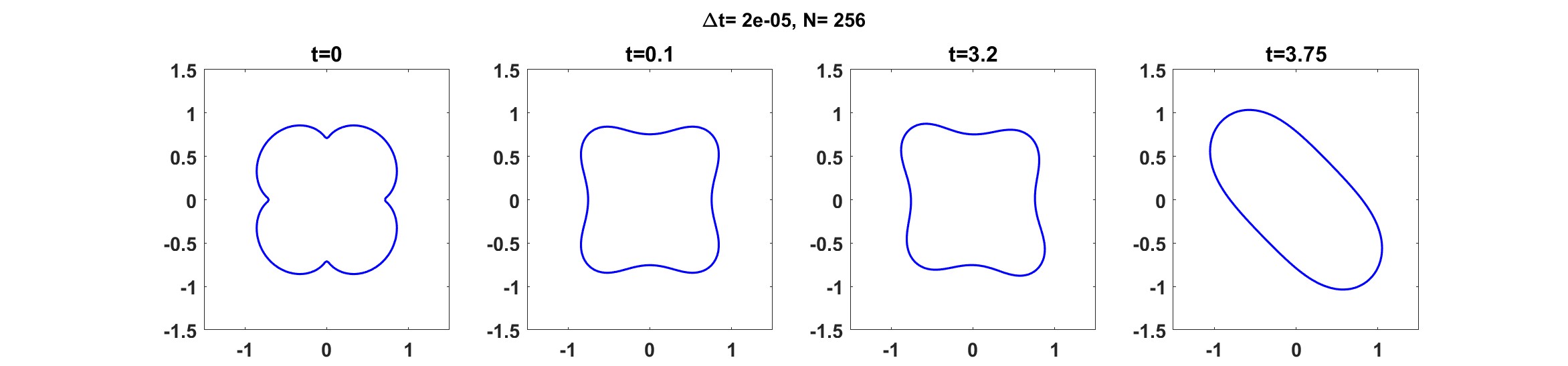}
    \includegraphics[width=1\textwidth]{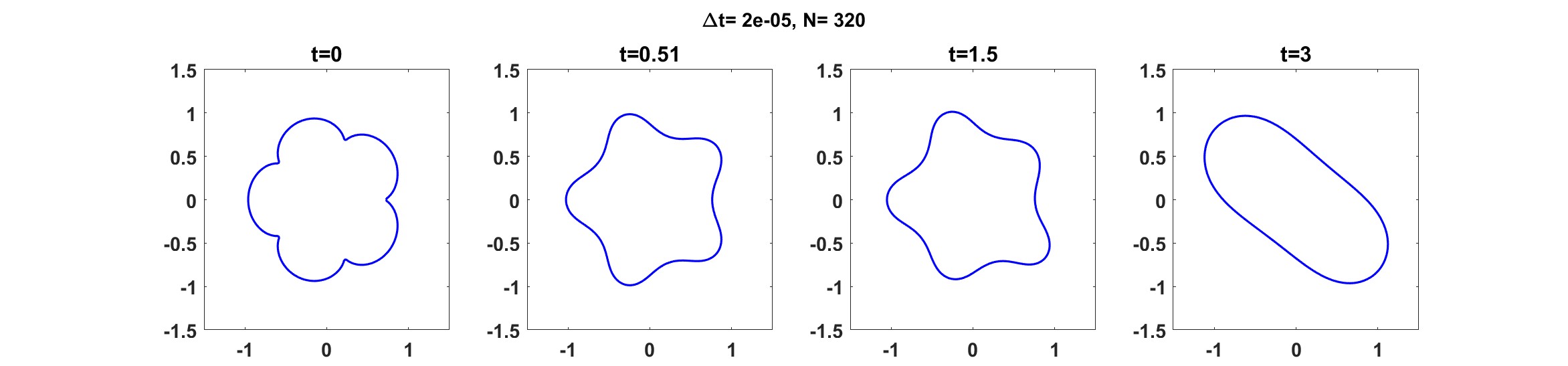}
    \includegraphics[width=1\textwidth]{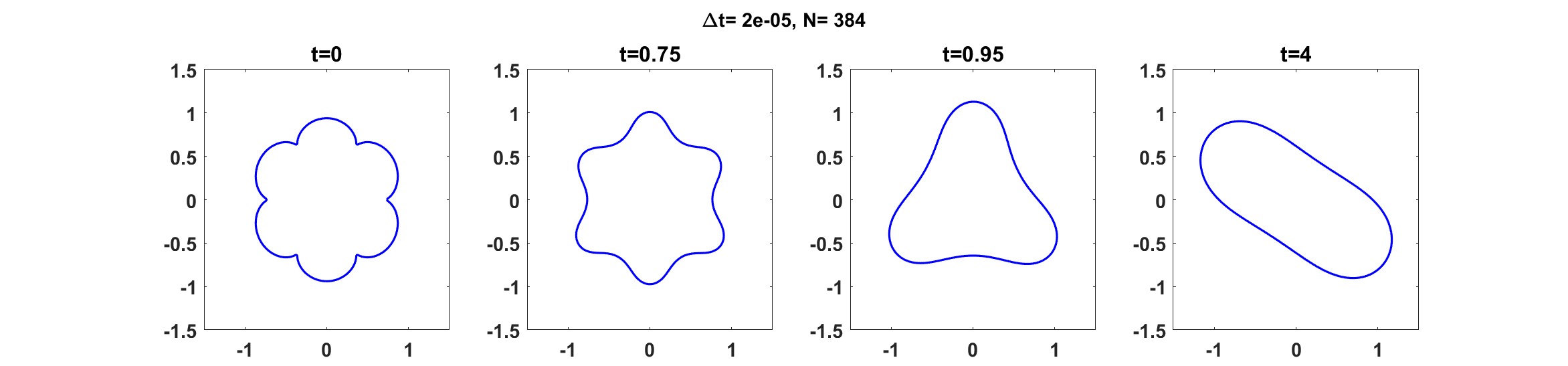}
     \caption{The motion of the near epicycloid initial conditions with $n$ lobes, $n=4,5,6$.}
    \label{fig:ex03_1}
\end{figure}

\section{Acknowledgements}

EGJ was supported by the European Union’s Horizon 2020 research and innovation programme under the Marie Skłodowska-Curie grant agreement CAMINFLOW No 101031111, and 
partially supported by the RYC2021-032877 research grant (Spain), the AEI projects PID2021-125021NA-I00, PID2020-114703GB-I00 and PID2022-140494NA-I00 (Spain), and the AGAUR project 2021-SGR-0087 (Spain). PCK was partially supported by NSF grant DMS-2042144 (USA) awarded to YM.
YM was partially supported by the NSF grant DMS-1907583, 2042144 (USA) and the Math+X award from the Simons Foundation.

\bibliographystyle{amsplain2link.bst}
\bibliography{references.bib}

\providecommand{\bysame}{\leavevmode\hbox to3em{\hrulefill}\thinspace}
\providecommand{\href}[2]{#2}
\begin{thebibliography}{10}
\expandafter\ifx\csname arxiv\endcsname\relax
  \def\arxiv#1{\burlalt{arXiv:#1}{http://arxiv.org/abs/#1}}\fi
\expandafter\ifx\csname doi\endcsname\relax
  \def\doi#1{\burlalt{doi:#1}{http://dx.doi.org/#1}}\fi
\expandafter\ifx\csname href\endcsname\relax
  \def\href#1#2{#2}\fi
\expandafter\ifx\csname burlalt\endcsname\relax
  \def\burlalt#1#2{\href{#2}{#1}}\fi

\bibitem{David-Michael-Keyang:Convergence-of-the-boundary-integral-method-for-interfacial-Stokes-flow}
David Ambrose, \emph{Convergence of the boundary integral method for interfacial stokes flow}, Math. Comp. \textbf{92} (2023), no.~340, 695--748, \doi{10.1090/mcom/3787}.

\bibitem{CameronStrain23}
Stephen Cameron and Robert~M. Strain, \emph{Critical local well-posedness for the fully nonlinear {P}eskin problem}, Communications on Pure and Applied Mathematics (2023), \doi{https://doi.org/10.1002/cpa.22139}.

\bibitem{Canham70}
Peter~B. Canham, \emph{The minimum energy of bending as a possible explanation of the biconcave shape of the human red blood cell}, J. Theor. Biol. \textbf{26} (1970), no.~1, 61--81.

\bibitem{ChenNguyen23}
Ke~Chen and Quoc-Hung Nguyen, \emph{The {P}eskin problem with $\dot{ B}^1_{\infty,\infty}$ initial data}, SIAM Journal on Mathematical Analysis \textbf{55} (2023), no.~6, 6262--6304, \doi{10.1137/22M1510984}.

\bibitem{DeulingHelfrich76}
H.~J. Deuling and Wolfgang Helfrich, \emph{Red blood cell shapes as explained on the basis of curvature elasticity}, Biophys J. \textbf{16} (1976), no.~8, 861--8.

\bibitem{fauci1988computational}
Lisa~J Fauci and Charles~S Peskin, \emph{A computational model of aquatic animal locomotion}, Journal of Computational Physics \textbf{77} (1988), no.~1, 85--108.

\bibitem{GancedoGraneroScrobogna21}
Francisco Gancedo, Rafael Granero-Belinch\'{o}n, and Stefano Scrobogna, \emph{Global existence in the {L}ipschitz class for the {N}-{P}eskin problem}, Indiana Univ. Math. J. \textbf{72} (2023), no.~2, 553--602, \doi{10.1512/iumj.2023.72.9320}.

\bibitem{GJMoriStrain23}
Eduardo Garc\'{\i}a-Ju\'{a}rez, Yoichiro Mori, and Robert~M. Strain, \emph{The {P}eskin problem with viscosity contrast}, Anal. PDE \textbf{16} (2023), no.~3, 785--838, \doi{10.2140/apde.2023.16.785}.

\bibitem{GJHaziot2023}
Eduardo García-Juárez and Susanna~V. Haziot, \emph{Critical well-posedness for the 2d {P}eskin problem with general tension},  (2023), \arxiv{2311.10157}.

\bibitem{GJKuoMoriStrain23}
Eduardo García-Juárez, Po-Chun Kuo, Yoichiro Mori, and Robert~M. Strain, \emph{{W}ell-{P}osedness of the 3{D} {P}eskin {P}roblem},  (2023), \arxiv{2301.12153}.

\bibitem{Helfrich73}
Wolfgang Helfrich, \emph{Elastic properties of lipid bilayers: Theory and possible experiments}, Z. Naturforsch. C \textbf{28} (1973), no.~11--12, 693--703.

\bibitem{KohneLengeler-18}
Matthias K\"{o}hne and Daniel Lengeler, \emph{Local well-posedness for relaxational fluid vesicle dynamics}, J. Evol. Equ. \textbf{18} (2018), no.~4, 1787--1818, \doi{10.1007/s00028-018-0461-3}.

\bibitem{KuoLaiMoriRodenberg2023}
Po-Chun Kuo, Ming-Chih Lai, Yoichiro Mori, and Analise Rodenberg, \emph{The tension determination problem for an inextensible interface in 2{D} {S}tokes flow}, Res. Math. Sci. \textbf{10} (2023), no.~4, Paper No. 46, 55, \doi{10.1007/s40687-023-00406-x}.

\bibitem{LaiSeol22}
Ming-Chih Lai and Seol Yunchang, \emph{A stable and accurate immersed boundary method for simulating vesicle dynamics via spherical harmonics}, J. Comput. Phys. \textbf{449} (2022), 110785.

\bibitem{Lengeler-18}
Daniel Lengeler, \emph{Asymptotic stability of local {H}elfrich minimizers}, Interfaces Free Bound. \textbf{20} (2018), no.~4, 533--550, \doi{10.4171/IFB/411}.

\bibitem{Li21}
Hui Li, \emph{Stability of the {S}tokes immersed boundary problem with bending and stretching energy}, J. Funct. Anal. \textbf{281} (2021), no.~9, Paper No. 109204, 65, \doi{10.1016/j.jfa.2021.109204}.

\bibitem{LinTong19}
Fang-Hua Lin and Jiajun Tong, \emph{Solvability of the {S}tokes immersed boundary problem in two dimensions}, Comm. Pure Appl. Math. \textbf{72} (2019), no.~1, 159--226, \doi{10.1002/cpa.21764}.

\bibitem{MarplePurohitVeerapaneni15}
Gary~R. Marple, Prashant~K. Purohit, and Shravan Veerapaneni, \emph{Equilibrium shapes of planar elastic membranes}, Phys. Rev. E (3) \textbf{92} (2015), no.~1, 012405, 10, \doi{10.1103/PhysRevE.92.012405}.

\bibitem{MondinoScharrer-20}
Andrea Mondino and Christian Scharrer, \emph{Existence and regularity of spheres minimising the {C}anham-{H}elfrich energy}, Arch. Ration. Mech. Anal. \textbf{236} (2020), no.~3, 1455--1485, \doi{10.1007/s00205-020-01497-4}.

\bibitem{mori2023well}
Yoichiro Mori and Laurel Ohm, \emph{Well-posedness and applications of classical elastohydrodynamics for a swimming filament}, Nonlinearity \textbf{36} (2023), no.~3, 1799.

\bibitem{MoriRodenbergSpirn19}
Yoichiro Mori, Analise Rodenberg, and Daniel Spirn, \emph{Well-posedness and global behavior of the {P}eskin problem of an immersed elastic filament in {S}tokes flow}, Comm. Pure Appl. Math. \textbf{72} (2019), no.~5, 887--980, \doi{10.1002/cpa.21802}.

\bibitem{NagasawaTakagi03}
Takeyuki Nagasawa and Izumi Takagi, \emph{Bifurcating critical points of bending energy under constraints related to the shape of red blood cells}, Calc. Var. Partial Differential Equations \textbf{16} (2003), no.~1, 63--111, \doi{10.1007/s005260100143}.

\bibitem{Peskin77}
Charles~S. Peskin, \emph{Numerical analysis of blood flow in the heart}, J. Comput. Phys. \textbf{25} (1977), no.~3, 220--252, \doi{10.1016/0021-9991(77)90100-0}.

\bibitem{Peskin02}
\bysame, \emph{The immersed boundary method}, Acta Numer. \textbf{11} (2002), 479--517, \doi{10.1017/S0962492902000077}.

\bibitem{RahimianVeerapaneniBiros10}
Abtin Rahimian, Shravan~Kumar Veerapaneni, and George Biros, \emph{Dynamic simulation of locally inextensible vesicles suspended in an arbitrary two-dimensional domain, a boundary integral method}, J. Comput. Phys. \textbf{229} (2010), no.~18, 6466--6484, \doi{10.1016/j.jcp.2010.05.006}.

\bibitem{Rodenberg:peskin-thesis}
Analise Rodenberg, \emph{2{D} {P}eskin {P}roblems of an {I}mmersed {E}lastic {F}ilament in {S}tokes {F}low}, ProQuest LLC, Ann Arbor, MI, 2018, Thesis (Ph.D.)--University of Minnesota.

\bibitem{Schygula-12}
Johannes Schygulla, \emph{Willmore minimizers with prescribed isoperimetric ratio}, Arch. Ration. Mech. Anal. \textbf{203} (2012), no.~3, 901--941, \doi{10.1007/s00205-011-0465-4}.

\bibitem{SeolHuKimLai16}
Yunchang Seol, Wei-Fan Hu, Yongsam Kim, and Ming-Chih Lai, \emph{An immersed boundary method for simulating vesicle dynamics in three dimensions}, J. Comput. Phys. \textbf{322} (2016), 125--141.

\bibitem{teran2010viscoelastic}
Joseph Teran, Lisa Fauci, and Michael Shelley, \emph{Viscoelastic fluid response can increase the speed and efficiency of a free swimmer}, Physical review letters \textbf{104} (2010), no.~3, 038101.

\bibitem{thomases2014mechanisms}
Becca Thomases and Robert~D Guy, \emph{Mechanisms of elastic enhancement and hindrance for finite-length undulatory swimmers in viscoelastic fluids}, Physical review letters \textbf{113} (2014), no.~9, 098102.

\bibitem{Tong21}
Jiajun Tong, \emph{{{R}egularized {S}tokes immersed boundary problems in two dimensions: Well-posedness, singular limit, and error estimates}}, Comm. Pure Appl. Math. \textbf{74(2):366–449} (2021).

\bibitem{Tong24}
\bysame, \emph{{Global solutions to the tangential {P}eskin problem in 2-D}}, Nonlinearity \textbf{37} (2024), no.~1.

\bibitem{TongWei23}
Jiajun Tong and Dondyu Wei, \emph{Geometric properties of the 2-{D} {P}eskin problem}, Preprint arXiv:2304.09556 (2023).

\bibitem{VeerapaneniGueyffierZorinBiros09}
Shravan~K. Veerapaneni, Denis Gueyffier, Denis Zorin, and George Biros, \emph{A boundary integral method for simulating the dynamics of inextensible vesicles suspended in a viscous fluid in 2{D}}, J. Comput. Phys. \textbf{228} (2009), no.~7, 2334--2353, \doi{10.1016/j.jcp.2008.11.036}.

\bibitem{VeerapaneniRajBirosPurohit09}
Shravan~K. Veerapaneni, Ritwik Raj, George Biros, and Prashant~K. Purohit, \emph{Analytical and numerical solutions for shapes of quiescent two-dimensional vesicles}, International Journal of Non-Linear Mechanics \textbf{44} (2009), no.~3, 257--262, \doi{https://doi.org/10.1016/j.ijnonlinmec.2008.10.004}.

\bibitem{Willmore65}
T.~J. Willmore, \emph{Note on embedded surfaces}, An. \c{S}ti. Univ. ``Al. I. Cuza'' Ia\c{s}i Sec\c{t}. I a Mat. (N.S.) \textbf{11B} (1965), 493--496.

\end{thebibliography}

\vspace{1cm}

\end{document}